\theoremstyle{definition}
\newtheorem{theorem}{Theorem}
\newtheorem{lemma}{Lemma}
\newtheorem{remark}{Remark}
\newtheorem{example}{Example}
\newtheorem{definition}{Definition}
\newtheorem{proposition}{Proposition}
\newtheorem{corollary}{Corollary}
\def\1{\mathbf{1}}
\def\cA {\mathscr{A}}
\def\cB {\mathscr{B}}
\def\cC{\mathscr{C}}
\def\cD{\mathscr{D}}
\def\cE{\mathscr{E}}
\def\cF{\mathscr{F}}
\def\cG{\mathscr{G}}
\def\cP{\mathscr{P}}
\def\cS{\mathscr{S}}
\def\cBG {\mathscr{BG}}
\def\RR{{\mathbb R}}
\def\gB{\mathfrak{B}}
\def\Lin{\mathsf{Lin}}
\def \0{{\mathbf{0}}}
\def \conv {\mathrm{conv}}
\def \mymid {\,:\,}
\begin{document}
\title{On the set of balanced games}
\author{Pedro GARCIA-SEGADOR${}^{a}$,  Michel
    GRABISCH${}^{b,c}$\thanks{Corresponding author. Centre d'Economie de
    la Sorbonne, 106-112 Bd de
    l'H\^opital, 75013 Paris, France. Email \tt michel.grabisch@univ-paris1.fr} and Pedro MIRANDA${}^{d}$\\
\normalsize   ${}^a$ National Statistical Institute, Madrid, Spain \\
\normalsize  ${}^b$ Universit\'e Paris I Panth\'eon-Sorbonne, Centre d'Economie
de la Sorbonne\\
\normalsize ${}^c$Paris School of Economics, Paris, France\\
\normalsize  ${}^d$ Complutense University of Madrid, Spain}

\date{}
\maketitle

\begin{abstract}
  We study the geometric structure of the set of cooperative TU-games having a
  nonempty core, characterized by Bondareva and Shapley as balanced games. We
  show that this set is a non-pointed polyhedral cone, and we find the set of
  its extremal rays and facets. This study is also done for the set of balanced
  games whose value for the grand coalition is fixed, which yields an affine
  non-pointed polyhedral cone. Finally, the case of non-negative balanced games
  with fixed value for the grand coalition is tackled. This set is a convex
  polytope, with remarkable properties. We characterize its vertices and facets,
  study the adjacency structure of vertices, develop an algorithm for generating
  vertices in a random uniform way, and show that this polytope is
  combinatorial and its adjacency graph is Hamiltonian. Lastly, we give a
  characterization of the set of games having a core reduced to a singleton.
\end{abstract}

{\bf Keywords:}  cooperative TU-games, balanced games, core, convex
  polyhedra, combinatorial polytope.

  {\bf MSC2000 subject classification:} Primary 91A12, Secondary 52B05

  {\bf OR/MS subject classification:} Games/group decisions:Cooperative, Mathematics:Sets:Polyhedra

\section{Introduction}
Given a set of players $N$, a game with transferable utility, abbreviated
hereafter as TU-game or simply game, is a mapping $v$ assigning to each subset
$S\subseteq N$ (called a coalition) a quantity $v(S)$, representing, e.g., the
benefit of cooperation of the players in $S$.  The core of a game appears to be
a fundamental concept, introduced by Gillies \cite{gil53}: It is the set of
payment vectors $x\in\RR^N$ to players, such that every coalition $S$ receives
at least the quantity $v(S)$, under the constraint that the total payment which
is distributed is equal to $v(N)$. This set, however, may be empty. Bondareva
\cite{bon63}, and independently Shapley \cite{sha67}, found a sufficient and
necessary condition for a game to have a nonempty core. Games satisfying this
condition are called {\it balanced} by Shapley, because it is based on
particular arrangements of players, called {\it (minimal) balanced collections},
which generalize the notion of partition of a set.  As the number of minimal balanced collections is finite, this condition amounts to check a finite number of linear inequalities, one for each possible minimal balanced collection.
This shows that the set of
balanced games is a (closed convex) polyhedron.  To the best of our
  knowledge, there is only one article studying related polyhedra \cite{krst19}, where the facets of the polyhedra of totally balanced, balanced and exact games are studied, but in that paper it is assumed that $v(\varnothing)$ might attain a non-null value, and hence the structures of the polyhedra are different.

The main aim of our paper is precisely to fill this gap. The game-theoretic
motivation behind this mathematical study, apart its own interest, is to solve
(in a future study) the problem of finding the closest balanced game to a
non-balanced game, as that would provide a new solution concept for games with
an empty core. This problem amounts to an orthogonal projection on a polyhedron,
a problem which has no analytical solution in general (see, e.g., \cite{rut17}). The
only hope to solve it is a deep understanding of the structure of the polyhedron
under consideration,  which is exactly what the present paper does.

\medskip

Before elaborating on our findings, we emphasize that this topic is not limited
to (cooperative) game theory. Games as defined above are merely set functions
vanishing on the empty set, and are encountered in many domains linked to
Operations Research, in particular decision theory, voting theory, combinatorial
optimization and reliability (see \cite{gra16} for details). In decision theory,
{\it capacities} \cite{cho53} are a particular class of games which are
monotonic, and represent uncertainty on the set of states of nature (see, e.g.,
Schmeidler \cite{sch89}). In this context, the core is the set of probability
measures which are ``compatible with'' (i.e., eventwise dominating) a given
capacity. Voting theory considers only 0-1-valued monotone games, which are
called {\it simple games} (see, e.g., Peters \cite{pet08}). They represent the
power of coalitions of players in order to win an election. They are in fact
special Boolean functions, while games are nothing other than pseudo-Boolean
functions, as introduced by Hammer (see, e.g., \cite{foha05}).  In combinatorial
optimization, submodular games are considered, as they are encountered for
example as rank function of a polymatroid \cite{edm70}, and the core corresponds to
the base polyhedron of a matroid (see the monograph of Fujishige devoted to this
topic \cite{fuj05b}). Simple games are also used in reliability theory where
they indicate the state of a system (functioning or not), depending on the
states of its components. We mention finally that our study is also linked to
combinatorics and the study of polytopes and polyhedra. Indeed, the concept of minimal balanced
collection generalizes the concept of partitions, and their enumeration remains
an open problem. Also, they are closely related to many geometrical properties
of the games and the core (see \cite{lagrsu23} for details).
As a conclusion, our results are not limited to
(cooperative) game theory but concern many fields of Operations Research, as
well as combinatorics and polyhedra.

\medskip

We split our study in three parts, considering three different sets of balanced
games. The first one is simply the set of all possible balanced games on $N$,
which we denote by $\cBG(n)$, with $|N|=n$. For the second one we impose the
restriction that $v(N)=\alpha$ for some fixed $\alpha\in\RR$. We denote this set
by $\cBG_\alpha(n)$. Lastly, we impose in addition that $v$ should be a
nonnegative function, with $\alpha=1$, w.l.o.g. We denote the set of such balanced game as
$\cBG_+(n)$.  It follows that $\cBG_+(n)$ is a polytope, and we give the complete
characterization of its vertices, as well as an algorithm to randomly generate
them in a uniform way. Interestingly, the number of vertices appears to be a known
integer sequence related to Boolean functions. We also characterize the
adjacency of vertices, and finally we show that this polytope is combinatorial
(in the sense of Naddef and Pulleyblank \cite{napu81}), i.e., the adjacency graph of its
vertices is Hamilton-connected.

We show that $\cBG(n)$ is a non-pointed cone, and identify its lineality space,
all its extremal rays, and all its facets, obtaining thus a complete description
of the polyhedron. The study of $\cBG_\alpha(n)$ is similar, and shows that it
is an affine non-pointed cone. Again, its structure is completely elucidated.

Lastly, we adress the following question: For which balanced games is the core
reduced to a singleton? The question is of interest since a core reduced to a
singleton provides a very simple and handy solution to a game.
We solve completely this question for $\cBG(n)$ and for
$\cBG_\alpha(n)$. Some open questions remain for $\cBG_+(n)$.

The paper is organized as follows. Section~\ref{sec:bc} introduces the necessary
material on games and balanced collections.  In Section~\ref{sec:bg} we study
the cone $\cBG(n)$, its lineality space, its extremal rays and facets, while
Section~\ref{sec:bgalpha} is devoted to the study of the affine cone
$\cBG_\alpha(n)$. Section~\ref{sec:bg+} is devoted to the study of the polytope
$\cBG_+(n)$: dimension, vertices and their enumeration, adjacency, and facets. We also provide a random procedure to generate vertices of this polytope. In Section~\ref{sec:pc} we address the problem of
finding balanced games whose core is reduced to a
singleton. Section 7 is devoted to point some possible applications of these results. Section~\ref{sec:con} gives some concluding remarks.

\section{Basic concepts}\label{sec:bc}
We refer the reader to the monographs \cite{pesu03} and \cite{gra16} for full
details, proofs and references. We limit here ourselves to the essential.
Throughout the paper we consider a (fixed) set $N$ of $n$ players, simply
denoted by $N= \{ 1, \ldots , n\} $. {\it Coalitions} are nonempty subsets of
$N$, denoted by capital letters $S,T$, etc. A {\it TU-game} $(N, v)$ (or simply
a game $v$) is a function $ v: 2^N \rightarrow \mathbb{R}$ satisfying
$v(\varnothing )=0.$ The value $v(S)$ represents the maximal value (benefit)
that the coalition $S$ can guarantee, no matter what players outside $S$ might
do. We will denote by $\cG(N)$ (or simply $\cG(n)$, as no subgame will be
considered) the set of games $v$ on $N.$

For further use, we introduce $\cG_+(n)$, the set of games $v\in\cG(n)$ such
that $v\geqslant 0$ (i.e., $v(S)\geqslant 0$ for all $S\in 2^N$) and $v(N)=1$,
and for every $\alpha\in \RR$ we introduce $\cG_\alpha(n)$, the set of games
$v\in \cG(n)$ such that $v(N)=\alpha$. In addition, we will often use the
following families of games, which are bases of the $(2^n-1)$-dimensional vector
space $\cG(n)$:
\begin{enumerate}
\item The {\it Dirac games} $\delta_S$, $\varnothing\neq S\subseteq N$, defined by
  \[
  \delta_S(T) = \begin{cases}
    1, & \text{ if } T=S\\
    0, & \text{ otherwise.}
    \end{cases}
    \]
  \item The {\it unanimity games} $u_S$,  $\varnothing\neq S\subseteq N$,
    defined by
  \[
  u_S(T) = \begin{cases}
    1, & \text{ if } T\supseteq S\\
    0, & \text{ otherwise.}
    \end{cases}
    \]
\end{enumerate}

Assuming that all players agree to form the grand coalition $N$, we look for a
way to share the benefit $v(N)$ among all players, i.e., for an {\it
  allocation} or {\it payment vector} $x\in \RR^N$, where coordinate $x_i$ indicates the
payoff given to player $i.$ For any coalition $S$, we denote by
$x(S):=\sum_{i\in S} x_i$ the total payoff given to the players in $S$. An
allocation is {\it efficient} if $x(N)=v(N)$. A systematic way of assigning a
set of allocations to a game is called a {\it solution concept}.

In this paper we focus on one of the best known solution concepts, which is the
{\it core} \cite{gil53}. The core is the set of efficient allocations satisfying {\it
  coalitional rationality}, which means that $x(S)\geqslant v(S)$ for all
coalitions $S$. Under this condition, no coalition $S$ has an incentive to leave the
grand coalition $N$ to form a subgame on $S$.  The core of a game
$(N,v)$ is denoted by $C(N,v)$ (or $C(v)$ for short if $N$ is fixed):
\[
C(v):=\{ x\in \mathbb{R}^n: x(S)\geqslant v(S), \forall S\in2^N,x(N)=v(N)\}.
\]
The core is a closed convex polytope that may be empty, as illustrated in the
following example.
\begin{example}\label{ex:1}
The unanimity game $u_S$ has a nonempty core for any $\varnothing\neq S\subseteq
N$. Indeed, $x=\frac{1}{|S|}\1^S$ is a core allocation, where $\1^S$ is the
characteristic vector of $S$, i.e., $\1_i^S=1$ if $i\in S$ and 0 otherwise. Moreover, it can be easily checked that

$$ C(u_S)=\{ x\in\RR^n: x(S)=1, x_j=0, j\not\in S\} .$$

However, any Dirac game $\delta_S$ has an empty core, except when $S=N$ , as it is easy to check.
\end{example}
A condition for nonemptiness of the core has been given by Bondareva
\cite{bon63} and Shapley \cite{sha67}, which we detail below.

A {\it balanced collection}  $\cB$ on $N$ is a family of nonempty subsets of $N$
such that there exist positive (balancing) weights $\lambda^{\cB}_S$, $S\in\cB$, satisfying
\[
 \sum_{\substack{S\in\cB\\S\ni i}} \lambda^{\cB}_S = 1, \forall i\in N.
\]

This notion is an extension of the notion of partition, as any partition is a
balanced collection with balancing weights all equal to 1. A balanced collection
is {\it minimal} if it contains no balanced proper subcollection. We denote by
$\gB^*(n)$ the set of all minimal balanced collections  on a set $N$ of cardinality
$n$, excluding the collection $\{ N\}$. It can be shown that minimal balanced collections
(abbreviated hereafter by m.b.c.) have a
unique set of balancing weights and that their cardinality is at most $n$. Moreover, if $\cB$ is
a m.b.c. with balancing weights $(\lambda^\cB_S)_{S\in \cB}$, then
$\overline{\cB}:=\{N\setminus S\mymid S\in\cB\}$ is also a m.b.c. with
balancing weights
\[
\lambda^{\overline{\cB}}_S=\frac{\lambda^\cB_{N\setminus
    S}}{\sum_{T\in\cB}\lambda^\cB_T-1}.
\]

A game $v$ is {\it balanced} if

\begin{equation}\label{bal}
 \sum_{S\in \cB} \lambda_S^{\cB} v(S) \leqslant v(N), \quad \forall \cB \in \gB^*(n).
\end{equation}

The following result holds \cite{bon63,sha67}.

\begin{theorem}\label{th:bal}
Consider a game $(N, v)$. Then, $C(v)\ne \varnothing $ if and only if $v$ is balanced.
\end{theorem}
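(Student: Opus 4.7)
My approach is the classical one of Bondareva and Shapley, based on linear-programming duality. First I would reformulate core nonemptiness as the linear program
\[
(P)\qquad \min\ x(N) \quad\text{s.t.}\quad x(S)\geqslant v(S)\ \ \forall S\in 2^N\setminus\{\varnothing\},\quad x\in\RR^N.
\]
The $S=N$ constraint forces $\min(P)\geqslant v(N)$, and a primal optimum achieving the value $v(N)$ is exactly a core allocation; conversely, every core allocation is such an optimum. Hence $C(v)\neq\varnothing$ if and only if $\min(P)=v(N)$.

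Next I would pass to the LP dual. Assigning a multiplier $\lambda_S\geqslant 0$ to each constraint, it reads
\[
(D)\qquad \max\ \sum_{\varnothing\neq S\subseteq N}\lambda_S v(S)\quad\text{s.t.}\quad\sum_{S\ni i}\lambda_S=1\ \ \forall i\in N,\quad\lambda\geqslant\0.
\]
Both programs are feasible ($(P)$ by taking $x_i$ large enough, $(D)$ by setting $\lambda_N=1$ and $\lambda_S=0$ otherwise), so strong duality gives $\min(P)=\max(D)$. Since the choice $\lambda_N=1$ already attains dual value $v(N)$, one always has $\max(D)\geqslant v(N)$. Consequently $C(v)\neq\varnothing$ is equivalent to $\sum_S\lambda_S v(S)\leqslant v(N)$ for \emph{every} dual-feasible $\lambda$.

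The last step is to reduce this seemingly infinite family of inequalities to the finite list indexed by $\gB^*(n)$. The feasible set of $(D)$ is a bounded polytope (since $\lambda_S\leqslant 1$ at every feasible point), so the linear maximum is attained at a vertex. A vertex has positive support $\cB=\{S:\lambda_S>0\}$ on which $\{\1^S:S\in\cB\}$ is linearly independent, which means the balancing system admits a unique positive solution supported on $\cB$; this is precisely the defining property of a minimal balanced collection, and conversely each m.b.c.\ together with its unique weights gives such a vertex. Dropping the trivial vertex $\cB=\{N\}$, which only produces $v(N)\leqslant v(N)$, leaves exactly the inequalities (\ref{bal}) ranging over $\cB\in\gB^*(n)$.

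The principal obstacle is this last bijection between vertices of the dual polytope and minimal balanced collections: one has to match the LP-theoretic notion of a basic feasible solution (linearly independent active columns) with the combinatorial characterization of m.b.c.'s by uniqueness of balancing weights recalled in Section~\ref{sec:bc}. Everything else is a routine application of strong LP duality.
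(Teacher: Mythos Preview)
Your proposal is correct and is the standard Bondareva--Shapley argument via LP duality. Note, however, that the paper does not supply its own proof of this theorem: it is stated as a classical result with a citation to \cite{bon63,sha67} and used as background, so there is no in-paper proof to compare against.
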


We will denote by $\cBG(N)$ the set of games in $\cG(N)$ being
balanced, i.e., the set of games having a nonempty core. For the same reasons as
before, we simplify the notation to $\cBG(n)$.  Hence, applying Theorem~\ref{th:bal} and (\ref{bal}), we get
\begin{equation}\label{eq:bg}
\cBG(n)=\Big\{ v\in \cG(n) \mymid \sum_{S \in \cB} \lambda_S^{\cB} v(S) -v(N) \leqslant 0, \quad \forall  \cB \in \gB^*(n) \Big\} .
\end{equation}
For application purposes and further studies, the
following subsets of $\cBG(n)$ are of interest:
\begin{enumerate}
\item The set of balanced games $v\in\cG_\alpha(n)$ for some $\alpha\in\RR$. We denote by $\cBG_{\alpha }(n)$ the set of such games, i.e.,
\begin{equation}\label{eq:bgalpha}
\cBG_\alpha (n)=\Big\{ v\in \cG_\alpha(n) \mymid \sum_{S \in \cB} \lambda_S^{\cB} v(S) \leqslant \alpha, \quad \forall  \cB \in \gB^*(n) \Big\} .
\end{equation}
The study of this set is motivated as follows: when searching for the closest
balanced game $v'$ to a given non-balanced game $v$, it is natural to impose
that $v'(N)=v(N)$, as this is the total benefit which has to be distributed
among players.
\item The set of balanced games $v\in\cG_+(n)$. We denote by $\cBG_+(n)$ the set
of such games, i.e.,
\begin{equation}\label{eq:bg+}
\cBG_+(n)=\Big\{ v\in \cG_+(n) \mymid \sum_{S \in \cB} \lambda_S^{\cB} v(S) \leqslant 1, \quad \forall \cB \in \gB^*(n) \Big\}.
\end{equation}
Studying this set is motivated by the fact that many applications concern games
with nonnegative values. Therefore it would make no sense to find a closest
balanced game taking negative values.

Observe that if $v\geqslant 0$ and $v(N)=\alpha\neq 0$, then $v$ is balanced iff
$\frac{1}{\alpha}v\in\cBG_+(n)$. This shows that studying the balancedness of
nonnegative games is equivalent to study the balancedness of nonnegative
  games with $v(N)=1$, hence the set $\cBG_+(n)$, except if $v(N)=0$, but the
latter case is trivial as only $v=0$ is balanced.
\end{enumerate}
Note that $\cBG_+(n), \cBG(n)$ and $\cBG_{\alpha }(n)$ are
convex polyhedra. The next sections are devoted to the study of the structure of these polyhedra.

\section{The polyhedron $\cBG(n)$}\label{sec:bg}

Since for any game $v\in \cG(n)$, $v(\varnothing)$ is fixed, we consider $v$
as being a vector in $\RR^{2^N\setminus\{\varnothing\}}$.
We start by showing a technical result, which will be useful in the sequel.

\begin{lemma}\label{obs:1}
  Let $v$ be such that
  \[
  \sum_{S\in\Pi}v(S)=v(N)
  \]
  for all partitions $\Pi$ of $N$ of the following form: either
  $\Pi=N^\bot:=\{\{1\},\ldots,\{n\}\}$ or $\Pi=\{S,(N\setminus S)^\bot\}$ for
  any $S\subset N$, $|S|>1$, with $T^\bot=\{\{i\},i\in T\}$. Then
  \[
\sum_{S\in\cB}\lambda^\cB_Sv(S)=v(N),\quad \forall \cB\in \gB^*(n).
  \]
\end{lemma}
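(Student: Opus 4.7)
The plan is to show that the hypothesis already forces $v$ to be \emph{additive}, i.e.\ $v(S)=\sum_{i\in S}v(\{i\})$ for every nonempty $S\subseteq N$. Once this is established, the general balanced-collection equality follows immediately from the definition of balancing weights, essentially by swapping the order of summation.

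First I would apply the hypothesis to $\Pi = N^\bot$, obtaining the identity $v(N)=\sum_{i\in N}v(\{i\})$. Next, for each $S\subset N$ with $|S|>1$, the hypothesis applied to $\Pi=\{S,(N\setminus S)^\bot\}$ gives $v(S)+\sum_{i\in N\setminus S}v(\{i\})=v(N)$. Subtracting the first identity from this yields $v(S)=\sum_{i\in S}v(\{i\})$. The trivial cases $|S|=1$ and $S=N$ are covered automatically, so $v$ is additive on all nonempty coalitions.

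Having additivity in hand, for any $\cB\in\gB^*(n)$ with balancing weights $(\lambda^\cB_S)_{S\in\cB}$ I would compute
\begin{align*}
\sum_{S\in\cB}\lambda^\cB_S v(S) = \sum_{S\in\cB}\lambda^\cB_S\sum_{i\in S}v(\{i\}) = \sum_{i\in N}v(\{i\})\sum_{\substack{S\in\cB\\ S\ni i}}\lambda^\cB_S = \sum_{i\in N}v(\{i\}) = v(N),
\end{align*}
where the third equality uses the defining property $\sum_{S\in\cB,\, S\ni i}\lambda^\cB_S=1$ of a balanced collection, and the last uses additivity applied to $S=N$ (equivalently, the $N^\bot$-identity from Step~1).

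The argument is short and the only real insight is structural rather than computational: one must recognise that the tiny family of partitions listed in the hypothesis is already sufficient to pin down $v$ as an additive set function, which is a much stronger conclusion than the assertion in the lemma. Once that observation is made, the remainder is a one-line double-summation manipulation, so I do not anticipate any genuine obstacle.
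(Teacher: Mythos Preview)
Your proof is correct. Both arguments begin by using the hypothesis to rewrite $v(S)$ in terms of the singleton values, but the paths then diverge. You subtract the $N^\bot$-identity from the $\{S,(N\setminus S)^\bot\}$-identity to obtain full additivity $v(S)=\sum_{i\in S}v(\{i\})$, after which a single swap of summation together with the balancing condition $\sum_{S\in\cB,\,S\ni i}\lambda^\cB_S=1$ finishes the proof. The paper instead substitutes $v(S)=v(N)-\sum_{i\in N\setminus S}v(\{i\})$ directly into $\sum_{S\in\cB}\lambda^\cB_S v(S)$ and evaluates the resulting double sum by invoking the complementary minimal balanced collection $\overline{\cB}=\{N\setminus S: S\in\cB\}$ and its weights $\lambda^{\overline{\cB}}_{N\setminus S}=\lambda^\cB_S/(\sum_T\lambda^\cB_T-1)$. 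Your route is shorter and avoids the complementary collection entirely; the paper's route, while longer, has the small side benefit of illustrating the duality $\cB\leftrightarrow\overline{\cB}$ that is used elsewhere in the article.
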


\begin{proof}
Assume $v$ satisfies the assumption. Take any m.b.c. $\cB$ with balancing
weights $(\lambda^\cB_S)_{S\in\cB}$ and denote
$\overline{\cB}=\{N\setminus S, S\in\cB\}$. Then, $\overline{\cB}$ is a m.b.c. with
weights $\lambda_S^{\overline{\cB}}=\frac{\lambda^\cB_{N\setminus
    S}}{\sum_{T\in\cB}\lambda_T^\cB-1}$ (see Section~\ref{sec:bc}). It follows that

\begin{align*}
\sum_{S\in\cB}\lambda^\cB_Sv(S) & = \sum_{S\in\cB}\lambda_S^\cB\Big(v(N)-\sum_{i\in
  N\setminus S}v(\{i\})\Big) = v(N)\sum_{S\in\cB}\lambda^\cB_S - \sum_{i\in
  N}\left[ v(\{i\})\sum_{\substack{N\setminus S\in\overline{\cB}\\N\setminus S\ni
    i}}\lambda_S^\cB\right] \\
 & = v(N)\sum_{S\in\cB}\lambda^\cB_S - \sum_{i\in
  N}v(\{i\})\sum_{\substack{N\setminus S\in\overline{\cB}\\N\setminus S\ni
    i}} \left[ \lambda^{\overline{\cB}}_{N\setminus
  S}\Big(\sum_{T\in\cB}\lambda_T^\cB-1\Big)\right] \\
 & = v(N)\sum_{S\in\cB}\lambda^\cB_S -
\Big(\sum_{T\in\cB}\lambda_T^\cB-1\Big)\underbrace{\sum_{i\in N}v(\{i\})}_{v(N)} = v(N).
\end{align*}
\end{proof}

Remark that the reciprocal holds as all partitions are minimal balanced collections with balanced weights equal to 1.

Let us first establish the structure of $\cBG(n).$

\subsection{Structure of $\cBG(n)$}

\begin{theorem}\label{th:3}
Let $n\geqslant 2$. Then, $\cBG(n)$ is a $(2^n-1)$-dimensional polyhedral cone,
which is not pointed. Its lineality space has dimension $n$, with basis
$(w_i)_{i\in N}$, with $w_i=\sum_{S\ni i}\delta_S$, where $\delta_S$ is the
Dirac game centered on $S$ (see Section~\ref{sec:bc}).
\end{theorem}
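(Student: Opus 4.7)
The plan is to read (\ref{eq:bg}) directly: $\cBG(n)$ is defined in the ambient space $\cG(n)\cong\RR^{2^n-1}$ by finitely many homogeneous linear inequalities (one per minimal balanced collection in $\gB^*(n)$), and is therefore a polyhedral cone. To see that its dimension is maximal, I would exhibit a strictly feasible point, for instance the game $v^\star$ with $v^\star(N)=1$ and $v^\star(S)=0$ for every other coalition. Because each $\cB\in\gB^*(n)$ consists only of proper subsets of $N$, every balancedness inequality at $v^\star$ reduces to $0\leqslant 1$ with strict slack, so a small open ball around $v^\star$ in $\RR^{2^n-1}$ lies entirely in $\cBG(n)$, giving $\dim\cBG(n)=2^n-1$.

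Next I would compute the lineality space, using the standard fact that for a polyhedral cone $\{v:Av\leqslant 0\}$ the lineality space equals $\{v:Av=0\}$. This yields
\[
\mathrm{lin}(\cBG(n)) = \Big\{ v\in\cG(n) \mymid \sum_{S\in\cB}\lambda^\cB_S v(S) = v(N),\ \forall \cB\in\gB^*(n)\Big\}.
\]
The crux is Lemma~\ref{obs:1} together with the converse remark noted right after its proof: membership in this set is equivalent to saturating only the partitions singled out in the lemma. From $\Pi=N^\bot$ one obtains $\sum_{i\in N}v(\{i\})=v(N)$, and from $\Pi=\{S,(N\setminus S)^\bot\}$ for each $S\subset N$ with $|S|>1$ one obtains $v(S)+\sum_{i\notin S}v(\{i\})=v(N)$. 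Subtracting, these two families of equations are together equivalent to additivity: $v(S)=\sum_{i\in S}v(\{i\})$ for every nonempty $S\subseteq N$.

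To conclude I would exhibit the basis. The game $w_i=\sum_{S\ni i}\delta_S$ satisfies $w_i(T)=1$ if $i\in T$ and $0$ otherwise; in particular $w_i(\{j\})=1$ iff $j=i$, so $w_i$ is additive and thus lies in $\mathrm{lin}(\cBG(n))$. The games $w_1,\ldots,w_n$ are linearly independent (evaluate any linear combination on singletons), and since any additive game is determined by its $n$ singleton values, the space of additive games has dimension at most $n$; hence $(w_i)_{i\in N}$ is a basis and $\dim\mathrm{lin}(\cBG(n))=n$. Because $n\geqslant 2$, the lineality space is nontrivial, so $\cBG(n)$ is not pointed. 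The only non-routine step is the reduction to additivity via Lemma~\ref{obs:1}; everything else is an immediate consequence of the definitions.
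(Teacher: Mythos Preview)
Your proposal is correct and follows essentially the same approach as the paper: both identify $\cBG(n)$ as a polyhedral cone from the homogeneous system~(\ref{eq:bg}), compute the lineality space as $\{v:Av=0\}$, and invoke Lemma~\ref{obs:1} to reduce that system to the partition equalities, from which additivity and the basis $(w_i)_{i\in N}$ follow. The only difference is in establishing full-dimensionality: you exhibit a strictly interior point $v^\star=\delta_N$, whereas the paper argues that no equality can be implied by the system because for every $S\neq\varnothing$ the coefficients of $v(S)$ across all inequalities share the same sign; both arguments are standard and equally short.
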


\begin{proof}
We make the proof in two steps.

\begin{enumerate}
\item First, observe that the 0 game is balanced, and if $v$ is balanced, then
  $\alpha v$ with $\alpha\geqslant 0$ is also balanced. Then $\cBG(n)$ is a
  cone. Moreover, it is a polyhedral cone since it is defined by a finite number of linear
  inequalities. Next, observe that no equality can be implied by the system
  (\ref{eq:bg}), as for any $S\ne \emptyset ,$ all coefficients of $v(S)$ are of the same sign. Since in addition there is no equality, the cone is full dimensional.

\item Denoting by $Av\leqslant 0$ the system of inequalities in (\ref{eq:bg}),
  the lineality space is the set of solutions of $Av=0$. From Lemma~\ref{obs:1},
  we infer that this is equivalent to solve the system
  \[
 \sum_{S\in\Pi}v(S)-v(N)=0
  \]
  for all partitions $\Pi$ of $N$ of the following form: either
  $\Pi=N^\bot:=\{\{1\},\ldots,\{n\}\}$ or
  $\Pi=\{S,(N\setminus S)^\bot\}$ for $S\subset N$, $|S|>1$.

  Therefore, we obtain that for any $S\subset N$, $|S|>1$, $v(S) =
  v(N)-\sum_{i\in N\setminus S}v(\{i\})$, and $v(N)=\sum_{i\in N}v(\{i\})$. It
  follows that its set of solutions is, expressing all coordinates in terms of
  those of $v(\{i\})$, $i=1,\ldots,n$,
\[
\{(v_1,\ldots, v_{n},v_1+v_2, \ldots, \sum_{i\in
  S}v_i,\ldots,\sum_{i\in N}v_i)\mymid v_1,\ldots,v_n\in\RR\}.
\]
A basis for this subspace is $\{w_1,\ldots,w_{n}\}$ with
\[
w_i=\sum_{S\ni i}\delta_S.
\]
\end{enumerate}
\end{proof}

The reader may compare this result with Lemma 3.1 in \cite{krst19}, where
  it is proved that modular games form the lineality space of the cone of
  balanced games, totally balanced games, and exact games.

As $\cBG(n)$ is not pointed, it can be decomposed as the direct sum of its lineality
space of dimension $n$ (that we will denote by $\Lin(\cBG(n))$) and a supplementary subspace (not necessarily orthogonal)
of dimension $2^n-n-1$,
which is a pointed cone and whose extremal rays can be found in the usual
way. However, since there is no unique way to choose a supplementary space,
there is no unique representation of $\cBG(n)$ by extremal rays. It seems that in the case of $\cBG(n)$ the orthogonal supplement does not
yield simple results, and we will use instead the supplement where the coordinates
corresponding to singletons are zero. We denote this set as $\cBG^0(n)$ and hence we have

$$\cBG(n)=\Lin(\cBG(n))\oplus \cBG^0(n).$$

We study in the next section the extremal rays of $\cBG(n).$

\subsection{Extremal rays of $\cBG(n)$}

\begin{theorem}\label{th:4}
Let $n\geqslant 2$. The extremal rays of $\cBG(n)$ are
  \begin{itemize}
  \item The $2n$ extremal rays corresponding to $\Lin(\cBG(n))$:
    $w_1,\ldots,w_n,-w_1,\ldots,-w_n$;
  \item $2^n-n-2$ extremal rays of the form $r_S=-\delta_S$, $S\subset N$, $|S|>1$;
  \item $n$ extremal rays of the form
    \[
r_i = \sum_{S\ni i, |S|>1}\delta_S, \quad i\in N.
    \]
  \end{itemize}

This yields in total $2^n+2n-2$ extremal rays.
\end{theorem}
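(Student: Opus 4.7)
The plan is to prove Theorem~\ref{th:4} in three stages: verify that each listed vector belongs to $\cBG(n)$, show that every element of $\cBG(n)$ is a nonnegative combination of the listed rays, and establish extremality of each ray. Membership is direct: the $\pm w_i$ belong to $\Lin(\cBG(n))$ by Theorem~\ref{th:3}; for $r_S=-\delta_S$ with $|S|>1$, $S\ne N$, and any m.b.c.\ $\cB$ the quantity $\sum_{T\in\cB}\lambda_T^\cB r_S(T)-r_S(N)$ equals $-\lambda_S^\cB[S\in\cB]\leqslant 0$; and for $r_i$ the defining identity $\sum_{T\in\cB,\,T\ni i}\lambda_T^\cB=1$ gives $\sum_{T\in\cB}\lambda_T^\cB r_i(T)=1-\lambda_{\{i\}}^\cB[\{i\}\in\cB]\leqslant 1=r_i(N)$.

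For the generation step, I would decompose any $v\in\cBG(n)$ as $v=\sum_{i\in N}v(\{i\})w_i+v^0$, where the first summand sits in $\Lin(\cBG(n))$ and $v^0:=v-\sum_{i}v(\{i\})w_i$ has $v^0(\{j\})=0$ for every $j$ and hence lies in $\cBG^0(n)$; splitting by the sign of $v(\{i\})$ expresses the first sum as a nonnegative combination of $\{+w_i,-w_i\}$. To handle $v^0$, pick any core vector $x\in C(v^0)$. Since $v^0(\{i\})=0$ forces $x_i\geqslant 0$ and $\sum_{i\in N}x_i=v^0(N)$, setting $\beta_T:=x(T)-v^0(T)\geqslant 0$ for $T\subsetneq N$ with $|T|>1$, and then comparing coefficients of $\delta_S$ on both sides for every $S\ne\varnothing$ (singletons and $S=N$ included), yields the identity
\[
v^0=\sum_{i\in N}x_i\,r_i\,+\,\sum_{\substack{T\subsetneq N\\ |T|>1}}\beta_T\,(-\delta_T),
\]
so the listed vectors generate $\cBG(n)$ as a cone.

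The extremality step carries the bulk of the technical content. The rays $\pm w_i$ form a basis of the lineality space and are extremal in that sense. For $r_S=-\delta_S$, assume $-\delta_S=u+v'$ with $u,v'\in\cBG^0(n)$: applying the balancedness inequality to the partition $N^\bot$ gives $u(N),v'(N)\geqslant 0$ summing to $0$, and applying it to $\{T,(N\setminus T)^\bot\}$ for each $T\ne S,N$ with $|T|>1$ gives $u(T),v'(T)\leqslant 0$ summing to $0$, so that $u=-u(S)\delta_S$ is a nonnegative multiple of $r_S$. For $r_i$, supposing $r_i=u+v'$ with $u,v'\in\cBG^0(n)$: the partition $\{T,(N\setminus T)^\bot\}$ for $T\ni i$, $|T|>1$, $T\ne N$ combined with $u(T)+v'(T)=1=u(N)+v'(N)$ and the bounds $u(T)\leqslant u(N)$, $v'(T)\leqslant v'(N)$ forces $u(T)=u(N)$; the partition $\{T,N\setminus T\}$ for $T\not\ni i$ with $|N\setminus T|>1$ then yields $u(T)\leqslant 0$ and symmetrically $v'(T)\leqslant 0$, hence both vanish; and in the residual case $T=N\setminus\{i\}$ (relevant only when $n\geqslant 3$) I would invoke the m.b.c.\ $\{N\setminus\{j\}:j\in N\}$ with uniform weight $1/(n-1)$, which does not contain $\{i\}$, to conclude $u(N\setminus\{i\})=0$. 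Hence $u=u(N)\,r_i$, giving the extremality of $r_i$.

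The main obstacle is the extremality of $r_i$: unlike the clean partition-based argument for $r_S$, reaching the subset $T=N\setminus\{i\}$ requires a genuinely non-partition m.b.c., and one must first verify that the collection of all $(n-1)$-element subsets of $N$ is indeed minimal balanced in order for its inequality in the definition of $\cBG(n)$ to apply. Once this case is resolved, the chain of equalities pins down $u$ up to positive scaling of $r_i$, and combining all the pieces gives the stated count $2n+(2^n-n-2)+n=2^n+2n-2$ of extremal rays.
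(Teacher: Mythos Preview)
Your proof is correct, and the extremality arguments for $r_S$ and $r_i$ are close in spirit to the paper's (the paper frames $r_i$ via the rank of the tight subsystem of (\ref{eq:bg}), but the content is the same chain of equalities you derive). Where you genuinely diverge is the completeness step. The paper assumes a ray $w\in\cBG^0(n)$ is \emph{not} a conic combination of the $r_S,r_i$, writes the resulting infeasible system $A\alpha\geqslant b$, and applies Farkas' Lemma together with the balanced collection $\{S\subset N:|S|>1\}$ to reach a contradiction. You instead produce the conic coefficients constructively: picking $x\in C(v^0)$ and reading off $\alpha_i=x_i\geqslant 0$ and $\alpha_S=x(S)-v^0(S)\geqslant 0$ solves the same system explicitly. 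Your route is shorter and more transparent, and it highlights a pleasant fact that the paper's Farkas argument obscures, namely that any core element of $v^0$ directly furnishes a conic decomposition over the claimed rays. The paper's approach, on the other hand, does not need to invoke the Bondareva--Shapley theorem a second time and stays purely on the inequality side.
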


\begin{proof}
We study each part separately.

\begin{enumerate}
\item The $2n$ extremal rays corresponding to $\Lin(\cBG(n))$ space come from the
vectors obtained in Th.~\ref{th:3}.

\item Let us consider $S\subset N$, $|S|>1$ and show that $r_S=-\delta_S$ is an extremal
ray. Let us denote by $r_S(T)$ the coordinate of $r_S$ corresponding to subset
$T$, and similarly for all rays. First, note that $r_S$ is a ray of $\cBG^0(n)$ because it satisfies
the system (\ref{eq:bg}) of inequalities and $r_S(\{i\})=0$ for all $i\in
N$.

Suppose that it is not extremal. Then, there exist two rays $r,r'\in\cBG^0(n)$ not proportional to $r_S$
such that $r_S=r+r'$. Suppose that $r(T)>0$ (w.l.o.g. $r(T)=1$) for some
$T\neq S$, $1<|T|<n$. Then, $r'(T)=-1$. Using the partition $\{T,(N\setminus T)^\bot\}$,
since $r(\{i\})=0$ for all $i\in N$, the corresponding inequality in
(\ref{eq:bg}) yields $r(N)\geqslant 1$ because the weights for a m.b.c. being a partition are 1, and since $r'(N)=-r(N)$, we get
$r'(N)\leqslant -1$. Taking now the partition $N^\bot$, we
obtain
\[
0 -r'(N)\leqslant 0
\]
which is impossible. Letting $r(T)<0$ yields the same contradiction by inverting
the roles of $r$ and $r'$. We conclude that $r(T)=0, \forall T\ne S.$ Finally, observe that $r(S)>0$ is not possible as the inequality for
partition $\{S,(N\setminus S)^\bot\}$ would not be satisfied. This finishes the proof.

\item Let us take $i\in N$ and show that $r_i$ is an extremal ray. First, as

$$ \sum_{S\in {\cB}, S\ni i} \lambda_S^{\cB} =1, \forall {\cB}\in\gB^*(n) ,$$ it follows that $r_i$ satisfies all inequalities in (\ref{eq:bg}); moreover, $r_i(\{j\} )=0$ for all
$j\in N$ and hence, $r_i$ is a ray of $\cBG^0(n).$ To show that it is extremal, we need to show that the set of solutions
of the subsystem of (\ref{eq:bg}) formed by tight inequalities has dimension 1 in
$\cBG^0(n)$. Consider a m.b.c. $\cB$ and its corresponding inequality:
\[
\sum_{S\in\cB}\lambda_S^\cB r_i(S) - r_i(N)\leqslant 0.
\]

We obtain
\[
\sum_{S\in\cB}\lambda_S^\cB r_i(S) - r_i(N) = \sum_{\substack{S\in\cB\\S\ni
    i\\|S|>1}}\lambda_S^\cB - 1.
\]

Hence, the inequality is tight iff $\{i\}\not\in\cB$. Let us call (\ref{eq:bg})' the subsystem of tight
inequalities in $v$, with $v\in\cBG^0(n)$.
Consider the partition $\{S,(N\setminus S)^\bot\}$ with
$1<|S|<n$. In terms of (\ref{eq:bg})', the corresponding equality is $v(S)=v(N)$. Now,
consider any m.b.c. $\cB$ such that $\{i\}\not\in\cB$. The corresponding
equality reads
\[
\sum_{\substack{S\in\cB\\S\ni i}}\lambda^\cB_Sv(S)-v(N)=0.
\]

Substituting $v(S)$ by $v(N)$ yields $v(N)-v(N)=0$. Therefore, the subspace of
solutions is given by $\{(\alpha,\ldots,\alpha),\alpha\in\RR\}$, of dimension 1.

\item It remains to prove that there is no other extremal ray. Consider a ray $w$
in $\cBG^0(n)$, hence satisfying $w(\{i\})=0$ for all $i\in N$, and
(\ref{eq:bg}). Suppose $w$ is not a conic combination of the extremal rays $r_S$
and $r_i$, $S\subset N$, $|S|>1$, $i\in N$, i.e., the following system has no
solution in $\alpha_S,\alpha_i$:

\begin{equation}\label{eq:s1}
  \begin{array}{rl} \sum_{S\subset N, |S|>1}\alpha_Sr_S + \sum_{i\in N}\alpha_ir_i&= w
  \\
  \alpha_S &\geqslant 0, \quad S\subset N, |S|>1
  \\
  \alpha_i &\geqslant 0, \quad i\in N.
  \end{array}
\end{equation}

Using definitions of $r_S, r_i$ and omitting coordinates for singletons in $r_S,r_i,w$ as they are all
0, we obtain that the previous system can be written as

\begin{equation}\label{eq:t}
 \begin{array}{rl} -\alpha_S +\sum_{i\in S}\alpha_i &= w(S), \quad S\subset N,|S|>1\\
  \sum_{i\in N}\alpha_i & = w(N)
  \\
    \alpha_S &\geqslant 0, \quad S\subset N, |S|>1\\
  \alpha_i &\geqslant 0, \quad i\in N.
  \end{array}
\end{equation}

We may denote (with some abuse) the
whole system (\ref{eq:t}) by $A\alpha\geqslant b$ in matrix notation.
If this system has no solution, then, by Farkas' Lemma, this is equivalent to say that there exists a vector $[y \ z\ t]$ with coordinates $y_S\in\RR$,
$S\subseteq N, |S|>1$, $z_T\geqslant 0$, $T\subset N,|T|>1$, and $t_i\geqslant
0$, $i\in N$, such that

$$[y \ z\ t]^\top A=0, \quad [y\ z\ t]^\top b>0.$$

Observe that the only vectors $[y
  \ z\ t]$ such that $[y \ z\ t]^\top A=0$ have the form:
\[
y_S=1, \quad z_S=1 \ (S\subset N, |S|>1), \quad y_N=-2^{n-1}+2-t, \quad  t_1= ... =t_n=t,
\]
up to a multiplicative factor $K>0$.

Then, we obtain
\[
[y\ z\ t]^\top b = \sum_{S\subset N,|S|>1} w(S)-(2^{n-1}-2+t) w(N).
\]

Observe that $\{S\subset N,|S|>1\}$ is a balanced collection (not minimal if
$n>3$) with balancing weights $\frac{1}{2^{n-1}-2}$. It follows that
\[
\sum_{S\subset N,|S|>1}w(S)\leqslant (2^{n-1}-2)w(N).
\]

Therefore $[y\ z\ t]^\top b\leqslant
(2^{n-1}-2)w(N)-(2^{n-1}-2+t)w(N)=-tw(N)\leqslant 0$. The last inequality
follows from $t\geqslant 0$ and (\ref{eq:t}). Hence,  this system has no
solution and consequently, system (\ref{eq:t}) has always a solution. Therefore, $w$ is
not extremal and the result follows.
\end{enumerate}
\end{proof}

Compare also this result with Lemma 5.4 in \cite{krst19}, giving condition
  for a ray to be extreme in the dual cone, and Corollary 5.1. mentioning the
  extremal rays $-\delta_S$.

The following result is immediate.
\begin{lemma}\label{lem:bg}
The cores of $w_i$, $-w_i$, $r_i$, $r_S$ for all $i\in N$, $S\subset N$, $|S|>1$
are singletons (respectively, $\{1_i\}$, $\{-1_i\}$,
$\{1_i\}$, $\{0\}$).
\end{lemma}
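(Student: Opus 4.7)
The plan is to verify each of the four cases directly from the definition of the core, using only the efficiency equation together with the singleton (and, in one case, the complement-of-singleton) rationality constraints; by Theorem~\ref{th:4} all four games are balanced, so their cores are nonempty, and it suffices to show that each defining system has a unique solution.

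First I would observe that $w_i$ coincides with the unanimity game $u_{\{i\}}$, since $w_i(T)=1$ iff $T\ni i$. Example~\ref{ex:1} then gives $C(w_i)=C(u_{\{i\}})=\{x\mymid x_i=1, x_j=0\ (j\neq i)\}=\{1_i\}$. For $-w_i$, the nonzero values are $-w_i(T)=-1$ exactly when $T\ni i$. The constraints $x_i\geqslant -1$, $x_j\geqslant 0$ ($j\neq i$), and $x(N)=-1$ imply, by summing the $n$ singleton inequalities, $-1=x(N)\geqslant -1$, hence equality throughout, forcing $x_i=-1$ and $x_j=0$. All remaining core inequalities $x(T)\geqslant -w_i(T)$ are then satisfied trivially.

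For $r_S=-\delta_S$ with $|S|>1$, the game vanishes on every singleton and on $N$. The singleton inequalities $x_j\geqslant 0$ combined with efficiency $x(N)=0$ force $x=\mathbf{0}$, and one checks that $x(T)=0\geqslant r_S(T)$ is automatic for all $T$ (with $0\geqslant -1$ at $T=S$). So $C(r_S)=\{\mathbf{0}\}$. For $r_i$, note that $r_i(\{j\})=0$ for every $j$ (including $j=i$, since the sum defining $r_i$ excludes $|S|=1$) and $r_i(N)=1$, so singleton rationality plus efficiency give $x_j\geqslant 0$ and $\sum_j x_j=1$. The extra ingredient is the coalition $N\setminus\{j\}$ for each $j\neq i$: assuming $n\geqslant 3$ we have $|N\setminus\{j\}|>1$ and $i\in N\setminus\{j\}$, so $r_i(N\setminus\{j\})=1$, yielding $1-x_j=x(N\setminus\{j\})\geqslant 1$, hence $x_j\leqslant 0$, hence $x_j=0$. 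This leaves $x_i=1$, and one checks $x(T)\geqslant r_i(T)$ for every $T$.

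The only genuine subtlety is the degenerate case $n=2$, where $r_i=\delta_N$ and the core is the full simplex $\{x_1+x_2=1,\,x\geqslant 0\}$ rather than a singleton; this appears to require the tacit assumption $n\geqslant 3$ in the $r_i$-clause. Apart from this caveat, the four cases are completely elementary: each reduces to combining nonnegativity (or lower bound $-1$) on singletons with efficiency, which is exactly why the authors can write ``The following result is immediate.''
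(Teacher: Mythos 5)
Your verification is correct and is exactly the direct computation the paper leaves implicit (the lemma is stated with no proof, as "immediate"): identify $w_i$ with $u_{\{i\}}$ and, for each game, combine the singleton inequalities (plus $x(N\setminus\{j\})\geqslant r_i(N\setminus\{j\})=1$ in the $r_i$ case) with efficiency $x(N)=v(N)$. Your caveat about $n=2$ is also well taken: there $r_i=\delta_N$, whose core is the whole segment $\{x\geqslant 0,\ x_1+x_2=1\}$, so the $r_i$-clause of the lemma tacitly requires $n\geqslant 3$.
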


\subsection{Facets}
\begin{theorem}\label{th:bgfacet}
  Each inequality in (\ref{eq:bg}) defines a facet, i.e., minimal balanced
  collections in $\gB^*(n)$ are in bijection with the facets of $\cBG(n)$.
\end{theorem}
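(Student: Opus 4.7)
The plan is to fix $\cB \in \gB^*(n)$ and show that the face $F_\cB := \cBG(n) \cap H_\cB$ has dimension $2^n - 2$, where $H_\cB := \{v : \phi_\cB(v) = 0\}$ with $\phi_\cB(v) := \sum_{S\in\cB}\lambda^\cB_S v(S) - v(N)$. Since $\cBG(n)$ is full-dimensional by Theorem~\ref{th:3}, this is equivalent to $\cB$'s inequality defining a facet. The bijection with $\gB^*(n)$ then follows: the coefficient vector of $\phi_\cB$ has entry $\lambda^\cB_S$ at $S \neq N$ (supported exactly on $\cB$) and $-1$ at $N$, so the uniqueness of balancing weights in a m.b.c.\ ensures that distinct m.b.c.\ yield non-proportional coefficient vectors and hence distinct facet-defining hyperplanes; conversely, every facet of $\cBG(n)$ arises from one of the defining inequalities in (\ref{eq:bg}).

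Using the complete list of generators from Theorem~\ref{th:4}, I first evaluate $\phi_\cB$ on each. Lemma~\ref{obs:1} gives $\phi_\cB(w_i) = 0$, so the lineality space lies in $H_\cB$. A direct computation yields $\phi_\cB(r_S) = -\lambda^\cB_S$ if $S\in\cB$ and $0$ otherwise, and $\phi_\cB(r_i) = -\lambda^\cB_{\{i\}}$ if $\{i\}\in\cB$ and $0$ otherwise (the latter using the balancing identity $\sum_{S\in\cB,\,S\ni i}\lambda^\cB_S = 1$). Hence the generators lying in $F_\cB$ are $w_1,\ldots,w_n$, the rays $r_S$ for $S\subset N$ with $|S|>1$ and $S\notin\cB$, and the rays $r_i$ for $\{i\}\notin\cB$: a total of $2^n + n - 2 - |\cB|$ vectors.

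To compute the dimension of their linear span, I expand an arbitrary relation $\sum \alpha_i w_i + \sum \beta_S r_S + \sum \gamma_i r_i = 0$ coordinate by coordinate. The singleton coordinates force $\alpha_j = 0$; for each $S\notin\cB$ with $|S|>1$ the $S$-coordinate equation determines $\beta_S = \sum_{i\in S,\,\{i\}\notin\cB}\gamma_i$. What remains is a linear system in the $\gamma_i$ for $\{i\}\notin\cB$: one equation $\sum_{i\in S,\,\{i\}\notin\cB}\gamma_i = 0$ for each non-singleton $S\in\cB$, plus $\sum_{\{i\}\notin\cB}\gamma_i = 0$ from the $N$-coordinate. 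Taking the $\lambda^\cB_S$-weighted sum of the $S$-equations recovers the $N$-equation (since $\{i\}\notin\cB$ implies $\sum_{S\in\cB,\,|S|>1,\,S\ni i}\lambda^\cB_S = 1$), so there is exactly one redundancy.

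The crux is to show that the remaining $a := |\{S\in\cB:|S|>1\}|$ equations have rank $a$ on the $n-b$ variables (where $b := |\{\{i\}\in\cB\}|$); equivalently, that the rows $(\mathbf{1}[i\in S])_{S\in\cB,\,|S|>1}$ restricted to columns $\{i:\{i\}\notin\cB\}$ are linearly independent. Any dependency $\sum c_S \mathbf{1}_S = 0$ in this restriction forces the vector $\sum_{S\in\cB,\,|S|>1} c_S\mathbf{1}_S \in \mathbb{R}^N$ to be supported on $\{i:\{i\}\in\cB\}$, hence expressible as a linear combination of the singleton rows $\mathbf{1}_{\{i\}}$ for $\{i\}\in\cB$; this yields a non-trivial dependency among all $|\cB|$ rows of the incidence matrix of $\cB$, contradicting the fact that this matrix has full row rank $|\cB|$ (which is precisely the uniqueness of the balancing weights of a minimal balanced collection). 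Thus the nullity is exactly $n-|\cB|$, the span has dimension $(2^n+n-2-|\cB|)-(n-|\cB|) = 2^n-2$, and $F_\cB$ is a facet. The main obstacle is exactly this rank argument, which relies on the uniqueness property characterizing m.b.c.
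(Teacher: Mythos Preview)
Your proof is correct and follows essentially the same strategy as the paper: identify which of the generators from Theorem~\ref{th:4} lie in the face $F_\cB$ (the same list in both cases) and then verify that their span has dimension $2^n-2$. Where the paper argues independence somewhat informally (``every $\delta_S$ is used by some ray'' and then picks a subfamily of size $2^n-2$), you compute the dimension directly via rank--nullity and make explicit the crucial fact that the incidence matrix of a m.b.c.\ has full row rank---this is exactly the rigorous content the paper's argument leaves implicit.
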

\begin{proof}
  Let us consider $\cB\in\gB^*(n)$ and the inequality
  \begin{equation}\label{eq:face}
  \sum_{S\in\cB}\lambda^\cB_Sv(S)-v(N)=0.
  \end{equation}

  It suffices to show that this face
  contains $2^n-2$ independent extremal rays. We already know by definition of
  the lineality space that any face contains the $n$ rays $w_1,\ldots,w_n$ of the
  lineality space.

  Among the extremal rays $r_S$, $S\subset N$, $|S|>1$, it is easy to check that
  only those such that $S\not\in\cB$ satisfy (\ref{eq:face}). There are
  $2^n-n-2-|\cB|+k_\cB$ such rays, where $k_\cB$ is the number of singletons in $\cB$.

  Next, define $B=N\setminus \bigcup_{\{i\}\in\cB}\{i\}$. Then $|B|=n-k_\cB$.
  Now, observe that any $r_i$ with $i\in B$ is satisfying
  (\ref{eq:face}). Indeed,
  \[
 \sum_{S\in\cB}\lambda^\cB_Sr_i(S)-r_i(N) = \sum_{\substack{S\in\cB\\S\ni
     i\\|S|>1}}\lambda_S^\cB -1 = 0.
 \]

 This makes another set of $n-k_\cB$ rays. Therefore, we have in total $2^n-2+n-|\cB|\geqslant
 2^n-2$, as $\cB$ is minimal (see Section~\ref{sec:bg}). It remains to prove
 independence. Observe that every $\delta_{\{i\}}$, $i\in N$, is used by
 $w_i$, every $\delta_S$, $|S|>1$, $S\not\in\cB$, is used by $r_S$, and every
 $\delta_S$ for  $S\in\cB$ is used in some of the $r_i$'s with $i\in B$, unless
 $S\subseteq N\setminus B$. But then consider $\cB'= \cB \setminus \{ S\} $ and weights

 \[ \lambda_T^{\cB'} = \left\{ \begin{array}{cc} \lambda_i^{\cB} +
   \lambda_S^{\cB}, & T=\{i\}, i\in S \\ \lambda_T^{\cB'}, & \text{ otherwise} \end{array} \right. ,\] and hence, $\cB$ is not minimal, a contradiction. If
 $|\cB|=n$, then we have exactly $2^n-2$ extremal rays, which are independent by
 the above argument. If $|\cB|<n$, then $n-|\cB|$ rays $r_i$ have to be
 removed to make the family independent.
\end{proof}

\section{The polyhedron $\cBG_\alpha(n)$}\label{sec:bgalpha}

Let us now study the set $\cBG_\alpha(n)$, defined by (see (\ref{eq:bgalpha})):
\[
\cBG_\alpha(n)=\Big\{v\in\cG(n)\mymid \sum_{S\in\cB}\lambda_S^\cB v(S)\leqslant \alpha,\, \forall \cB\in\gB^*(n)\Big\}.
\]

We follow the same notation as in Section~\ref{sec:bg}, except that now $v$ is a
vector in $\RR^{2^N\setminus\{\varnothing,N\}}$.  As the proof techniques
  are similar to the case of $\cBG(n)$, the proofs are relegated to the Appendix
  or omitted. We begin by expliciting the structure of $\cBG_\alpha(n)$.

\begin{theorem}\label{th:5}
Let $n\geqslant 2$, $\alpha\in\RR$. Then, $\cBG_\alpha(n)$ is a
$(2^n-2)$-dimensional affine cone\footnote{ i.e., a cone plus a point.}, which is not pointed. Its lineality space has dimension $n-1,$ with base
$(w_i)_{i\in
  N\setminus\{n\}}$, where
\[
w_i = \sum_{\substack{S\ni i\\S\not\ni n}}\delta_S - \sum_{\substack{S\not\ni i\\S\ni n}}\delta_S.
\]

The affine space is given by
\begin{align*}
  \sum_{i\in N}v(\{i\}) &= \alpha\\
  v(S)+\sum_{i\in N\setminus S}v(\{i\}) & = \alpha, \quad S\subset N, |S|>1.
\end{align*}
\end{theorem}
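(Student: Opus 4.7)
The plan is to mirror the proof of Theorem~\ref{th:3}, converting the fixed-value constraint $v(N)=\alpha$ into an affine cone structure via a suitable translation. The key reduction remains Lemma~\ref{obs:1}.

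First I would translate by the symmetric modular game $v_0(S):=|S|\alpha/n$. Modularity yields $\sum_{S\in\cB}\lambda^\cB_S v_0(S)=\sum_{i\in N}v_0(\{i\})\sum_{S\in\cB,\,S\ni i}\lambda^\cB_S=\sum_{i\in N}v_0(\{i\})=\alpha$ for every $\cB\in\gB^*(n)$. Writing $v=v_0+w$ then transforms the defining constraints of $\cBG_\alpha(n)$ into the homogeneous system $\sum_{S\in\cB}\lambda^\cB_S w(S)\leqslant 0$ for all $\cB\in\gB^*(n)$, together with $w(N)=0$. Hence $\cBG_\alpha(n)=v_0+C$, where $C$ is a polyhedral cone in the $(2^n-2)$-dimensional subspace $\{w\mymid w(N)=0\}$, establishing the affine cone structure. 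Full-dimensionality follows by exhibiting a strictly interior point, for example $w^*$ defined by $w^*(S)=-1$ for $\varnothing\neq S\subset N$ and $w^*(N)=0$: for every m.b.c.\ $\cB$, $\sum_{S\in\cB}\lambda^\cB_S w^*(S)=-\sum_{S\in\cB}\lambda^\cB_S<0$, since no $\cB\in\gB^*(n)$ contains $N$. Thus $\dim\cBG_\alpha(n)=2^n-2$.

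The lineality space of $\cBG_\alpha(n)$ equals that of $C$, and consists of $w$ with $w(N)=0$ at which every balancedness inequality is saturated. By Lemma~\ref{obs:1} together with its converse (the remark following its proof), this is equivalent to the finite system $\sum_{i\in N}w(\{i\})=0$ and $w(S)+\sum_{i\in N\setminus S}w(\{i\})=0$ for every $S\subset N$ with $|S|>1$. Combining these forces $w(S)=\sum_{i\in S}w(\{i\})$, so $w$ must be modular with $\sum_i w(\{i\})=0$: an $(n-1)$-dimensional subspace parameterized by $w(\{1\}),\ldots,w(\{n-1\})$. Each $w_i$ in the statement is readily recognized as the modular game with $w_i(\{i\})=1$, $w_i(\{n\})=-1$ and other singleton values zero; inspecting the singleton coordinates shows that $w_1,\ldots,w_{n-1}$ are linearly independent, so they form the stated basis and $\cBG_\alpha(n)$ is not pointed.

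Finally, the affine space in the statement is precisely $v_0+\Lin(\cBG_\alpha(n))$, the maximal affine subspace contained in $\cBG_\alpha(n)$: its defining equalities follow by adding the identities $\sum_i v_0(\{i\})=\alpha$ and $v_0(S)+\sum_{i\in N\setminus S}v_0(\{i\})=\alpha$ to the homogeneous counterparts satisfied by lineality elements. A dimension count of $2^n-n-1$ independent equations in the $(2^n-2)$-dimensional ambient space leaves an $(n-1)$-dimensional affine space, matching the lineality dimension. The main technical work is encapsulated in Lemma~\ref{obs:1}, so beyond careful bookkeeping I anticipate no serious obstacle.
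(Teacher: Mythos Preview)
Your proof is correct and follows essentially the same strategy as the paper: translate to exhibit a homogeneous polyhedral cone, then invoke Lemma~\ref{obs:1} (and its converse) to reduce the lineality computation to the finite system of partition equalities. The only notable difference is cosmetic: the paper translates by $\alpha u_{\{n\}}$ rather than your symmetric modular game $v_0(S)=|S|\alpha/n$, which is why the paper's formulation singles out the coordinate $\{n\}$; your choice is more symmetric but yields the same lineality space and affine subspace, and your explicit interior point $w^*$ makes the full-dimensionality argument slightly more transparent than the paper's appeal to the sign pattern of coefficients.
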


In Theorem~\ref{th:5}, element $n$ plays a particular role, but this choice is
arbitrary.
Indeed, note that the affine space in $\cBG_\alpha(n)$ contains in particular the games
$\alpha u_{\{i\}}$, for all $i\in N$.

To find the extreme rays of $\cBG_\alpha(n)$, we proceed as for
$\cBG(n)$. Using the notation from the proof of Th.~\ref{th:5}, we
write

$$\cBG_\alpha(n)=\alpha u _{\{n\}}+\cC_\alpha(n),$$ where $\cC_\alpha(n)$ is
a cone defined by the system (\ref{eq:6}). Now,

$$\cC_\alpha(n)=\Lin(\cC_\alpha(n))\oplus
\cC^0_\alpha(n),$$ where $\Lin(\cC_\alpha(n))$ is the lineality space of
$\cC_\alpha(n)$ and $\cC_\alpha^0(n)$ is its supplement where the coordinates
of the singletons $\{1\},\ldots,\{n-1\}$ are zero.

\begin{theorem}\label{th:6}
  Let $n\geqslant 2$ and $\alpha\in\RR$. The extremal rays of $\cBG_\alpha(n)$
  are:

  \begin{itemize}
  \item The $2n-2$ extremal rays corresponding to  $\Lin(\cC_\alpha(n))$:
    $w_1,\ldots,w_{n-1},-w_1,\ldots,-w_{n-1}$;
  \item  $2^n-n-2$ extremal rays of the form $r_S=-\delta_S$, $S\subset N$, $|S|>1$;
  \item $n$ extremal rays of the form
    \[
r_i = \sum_{\substack{S\ni i\\ S\not\ni n\\|S|>1}}\delta_S-\sum_{\substack{S\not\ni i\\ S\ni n}}\delta_S,\quad i\in N\setminus\{n\},
\]
and $r_n=-\delta_{\{n\}}$.
  \end{itemize}

This yields in total $2^n+2n-4$ extremal rays.
\end{theorem}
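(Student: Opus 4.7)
The plan is to parallel the proof of Theorem~\ref{th:4}, exploiting the decomposition $\cBG_\alpha(n)=\alpha u_{\{n\}}+\Lin(\cC_\alpha(n))\oplus\cC_\alpha^0(n)$. The $2(n-1)$ rays $\pm w_1,\ldots,\pm w_{n-1}$ are a basis of the lineality space by Theorem~\ref{th:5}, so the substantive work takes place inside the pointed cone $\cC_\alpha^0(n)$, whose defining inequalities (after the shift by $\alpha u_{\{n\}}$) read $\sum_{S\in\cB}\lambda_S^\cB v(S)\leqslant 0$ for all $\cB\in\gB^*(n)$, subject to $v(\{j\})=0$ for $j=1,\ldots,n-1$. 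A preliminary computation, based on the identities $\sum_{S\in\cB,S\ni i}\lambda_S^\cB=\sum_{S\in\cB,S\ni n}\lambda_S^\cB=1$, yields
\[
\sum_{S\in\cB}\lambda_S^\cB r_i(S)=-\lambda_{\{i\}}^\cB\1_{\{i\}\in\cB},
\]
which both certifies $r_i\in\cC_\alpha^0(n)$ and identifies the m.b.c.\ at which the corresponding inequality is tight as exactly those with $\{i\}\notin\cB$. The membership of $r_S=-\delta_S$ is immediate.

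For extremality of $r_S=-\delta_S$ ($|S|>1$, $S\subset N$), I would reproduce step~2 of Theorem~\ref{th:4}: writing $r_S=r+r'$ in $\cC_\alpha^0(n)$, the partition $N^\bot$ forces $r(\{n\}),r'(\{n\})\leqslant 0$ with $r(\{n\})+r'(\{n\})=0$, hence both vanish; then the partitions $\{T,(N\setminus T)^\bot\}$ for $T\neq S$, $|T|>1$, combined with the now-vanishing singletons, force $r(T),r'(T)\leqslant 0$ and thus $r(T)=0$ by $r(T)+r'(T)=0$; finally the same partition at $T=S$ bounds $r(S)\leqslant 0$, so $r$ is proportional to $r_S$.

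For extremality of the rays $r_i$ (including $r_n=-\delta_{\{n\}}$), I would show that the face cut out by the tight inequalities is one-dimensional by solving coordinate by coordinate. The tight partitions $\{T,(N\setminus T)^\bot\}$ with $i\in T$, $|T|>1$, yield $v(T)=-v(\{n\})\1_{n\notin T}$; for $S\not\ni i$ with $1<|S|<n-1$, the tight partition $\{S,N\setminus S\}$ together with the previous formula applied to $N\setminus S\ni i$ forces $v(S)=v(\{n\})\1_{n\in S}$; and $v(N\setminus\{i\})$ is pinned down from the tight m.b.c.\ $\{N\setminus\{j\}:j\in N\}$ with uniform weights $\tfrac{1}{n-1}$, which does not contain $\{i\}$. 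All values are thereby determined by $v(\{n\})$, so the face is the line spanned by $r_i$. The subcase $i=n$ is analogous and strictly simpler, since tightness then requires $\{n\}\notin\cB$ and the partitions $\{T,(N\setminus T)^\bot\}$ with $n\in T$, $|T|>1$, together with $v(\{j\})=0$ for $j<n$, force $v(T)=0$ for every $T$ with $n\in T$, $|T|>1$, and the m.b.c.\ $\{N\setminus\{j\}:j\in N\}$ then forces $v(N\setminus\{n\})=0$ and similarly $v(S)=0$ for the remaining $S\not\ni n$.

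The last step is to prove that no other extremal ray exists. I would mimic step~4 of Theorem~\ref{th:4}: assume a ray $w\in\cC_\alpha^0(n)$ cannot be written as a conic combination of the listed rays, invoke Farkas' Lemma to obtain a separating vector $[y\ z\ t]$, and derive a contradiction by evaluating the dual inequality against a suitable balanced collection asymmetric in the role of player $n$. I expect this to be the main obstacle: unlike in Theorem~\ref{th:4}, where the balanced collection $\{S\subset N:|S|>1\}$ with equal weights closes the argument, the distinguished status of $n$ in the $r_i$'s will force a more delicate collection whose members are tailored to the supports of the $r_i$'s, and checking that the resulting balancing weights do produce the required contradiction will require careful combinatorial bookkeeping.
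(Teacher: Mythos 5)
The parts you actually carry out are correct and follow the paper's route: the same shift by $\alpha u_{\{n\}}$ and split into $\Lin(\cC_\alpha(n))\oplus\cC^0_\alpha(n)$, the identity $\sum_{S\in\cB}\lambda^\cB_S r_i(S)=-\lambda^\cB_{\{i\}}\1_{\{i\}\in\cB}$ (which is exactly the computation in the paper's step for $r_i$, giving both feasibility and tightness iff $\{i\}\notin\cB$), the decomposition argument for $r_S=-\delta_S$, and the ``tight system has dimension $1$'' criterion for $r_i$. In fact your dimension count for $r_i$ is more careful than the paper's own: the paper only uses the tight partitions $\{T,(N\setminus T)^\bot\}$ with $i\in T$, which leave the coordinates $v(S)$ with $S\not\ni i$ unconstrained (and its displayed solution set is not even satisfied by $r_i$ on the sets $S\ni n$, $S\not\ni i$, $|S|>1$); your additional use of the two-block partitions $\{S,N\setminus S\}$ and of the m.b.c.\ $\{N\setminus\{j\}\mymid j\in N\}$ pins down precisely those missing coordinates, and your separate treatment of $r_n$ (which the paper instead folds into the $r_S$ case with $S=\{n\}$) is also fine.

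The genuine gap is the exhaustiveness step. You only state that you ``would mimic'' step~4 of Theorem~\ref{th:4} and explicitly leave the decisive verification open, predicting that the special role of player $n$ will force a new, delicately weighted balanced collection. Without that verification, the claim that there are no extremal rays beyond the listed ones --- half of the theorem --- is unproven. Moreover, the anticipated obstacle does not materialize: writing the system as $-\alpha_S-\sum_{i\notin S}\alpha_i=w(S)$ for $S\ni n$, $|S|>1$, and $-\alpha_S+\sum_{i\in S}\alpha_i=w(S)$ for $S\not\ni n$, $|S|>1$, the Farkas dual vector is even simpler than in Theorem~\ref{th:4}, namely $y_S=z_S=1$ for all $S\subset N$, $|S|>1$, and $t_1=\cdots=t_n=0$, and the very same balanced collection $\{S\subset N\mymid |S|>1\}$ with weights $\frac{1}{2^{n-1}-2}$ yields $[y\ z\ t]^\top b=\sum_{S\subset N,|S|>1}w(S)\leqslant 0$, contradicting the strict positivity required by Farkas' Lemma. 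So the step closes essentially verbatim as in Theorem~\ref{th:4}, but as written your proposal does not contain it, and this needs to be filled in for the proof to be complete.
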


Observe that by definition of the lineality space, the extremal rays $w_i,-w_i$
belong to every facet of $\cC_\alpha(n)$. We have also the following result
(proof is omitted).

\begin{lemma}
 The core of $w_i$, $-w_i$ for all $i\in N\setminus \{n\}$, $r_i$ for all
  $i\in N$, $r_S$ for all $S\subset N$, $|S|>1$ or $S=\{ n\} ,$ are reduced to singletons,
  which are $\{1_i-1_n\}$, $-\{1_i+1_n\}$,
  $\{1_i-1_n\}$, and $\{0\}$, respectively.
\end{lemma}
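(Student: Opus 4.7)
The plan is a case-by-case verification, essentially in the spirit of Lemma~\ref{lem:bg}. For each extremal ray $\rho$ listed I would exhibit the claimed allocation $x^*$, check directly that $x^*(S)\geqslant \rho(S)$ for every coalition $S$ and $x^*(N)=\rho(N)$, and then pin down uniqueness from a small set of the defining inequalities of $C(\rho)$. Note that every ray here satisfies $\rho(N)=0$, so efficiency reads $x(N)=0$; the singleton inequalities $x_j\geqslant \rho(\{j\})$ will carry most of the weight.

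For $w_i$ and $-w_i$ with $i\in N\setminus\{n\}$, one has $\rho(\{i\})=\pm 1$, $\rho(\{n\})=\mp 1$, and $\rho(\{j\})=0$ for $j\neq i,n$. Summing the $n$ singleton inequalities gives $x(N)\geqslant 0$, with equality forced by efficiency, so every singleton constraint is tight; this yields the allocations $1_i-1_n$ and $1_n-1_i$ (the printed ``$-\{1_i+1_n\}$'' appears to be a typo for $\{1_n-1_i\}$, whose coordinates do sum to $0$). For $r_S=-\delta_S$ with $S\subset N$, $|S|>1$, the same sum of singleton inequalities (now all of the form $x_j\geqslant 0$) against $x(N)=0$ yields $x=0$. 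The ray $r_n=-\delta_{\{n\}}$ needs one extra step: besides $x_j\geqslant 0$ for $j\neq n$ and $x_n\geqslant -1$, one uses the coalitions $N\setminus\{j\}$ for $j\neq n$, on which $r_n$ vanishes; then $-x_j=x(N)-x_j\geqslant 0$ combined with $x_j\geqslant 0$ forces $x_j=0$, whence $x_n=0$.

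The only non-trivial case is $r_i$ with $i\in N\setminus\{n\}$, where the claimed core element is $x^*=1_i-1_n$. Membership in $C(r_i)$ is a routine check following the three cases in the definition of $r_i$ (a coalition either contains $i$ but not $n$, contains $n$ but not $i$, or contains both or neither). For uniqueness I would use two ``large'' coalitions. First, $N\setminus\{n\}$ contains $i$, avoids $n$, and has size greater than $1$, so $r_i(N\setminus\{n\})=1$; this gives $-x_n=x(N\setminus\{n\})\geqslant 1$, hence $x_n\leqslant -1$, and together with the singleton $x_n\geqslant -1$ this pins $x_n=-1$. Symmetrically, $N\setminus\{i\}$ avoids $i$ and contains $n$, so $r_i(N\setminus\{i\})=-1$ yields $x_i\leqslant 1$; the coalitional inequality $x_i+x_n\geqslant r_i(\{i,n\})=0$ then forces $x_i\geqslant 1$, so $x_i=1$. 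The remaining singletons $x_j\geqslant 0$ for $j\neq i,n$ combined with efficiency force $x_j=0$.

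The calculations are routine; the only delicate point is identifying, for each family of rays, the minimal set of coalitional inequalities that collapses $C(\rho)$ to a point. For $r_i$, locating the pair $\{N\setminus\{n\},N\setminus\{i\}\}$ is the main obstacle I would expect — once this pair is in hand, everything follows from singleton constraints and efficiency. (Some care is needed in the small case $n=2$, where $r_i$ degenerates into $r_n$ and the listing becomes partially redundant.)
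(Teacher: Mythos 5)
Your case-by-case verification is correct and is precisely the routine argument the paper has in mind (it states this lemma with the proof omitted): exhibit the claimed allocation, check membership, and force uniqueness from the singleton constraints and efficiency, supplemented by the coalitions $N\setminus\{n\}$, $N\setminus\{i\}$, $\{i,n\}$ for $r_i$ and $N\setminus\{j\}$ for $r_n=-\delta_{\{n\}}$. You are also right that the printed ``$-\{1_i+1_n\}$'' must be a typo for $\{1_n-1_i\}$ (the stated vector is not even efficient), and your $n=2$ caveat is warranted, since there $r_i$ collapses to $-\delta_{\{n\}}$ whose core is a segment, so the lemma should be read for $n\geqslant 3$.
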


\begin{theorem}\label{th:bgafacet}
  Each inequality in (\ref{eq:bgalpha}) defines a facet, i.e., minimal balanced
  collections in $\gB^*(n)$ are in bijection with the facets of $\cBG_\alpha(n)$.
\end{theorem}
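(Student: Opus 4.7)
The plan is to adapt the proof of Theorem~\ref{th:bgfacet} to the present setting. Fix $\cB \in \gB^*(n)$ and let $F_\cB$ be the face of $\cBG_\alpha(n)$ where $\sum_{S\in\cB}\lambda_S^\cB v(S)=\alpha$. Since $\cBG_\alpha(n)$ has dimension $2^n-2$ by Theorem~\ref{th:5}, showing that $F_\cB$ is a facet amounts to exhibiting $2^n-3$ linearly independent extremal rays on $F_\cB$.

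I first reduce to the homogeneous cone $\cC_\alpha(n)$ via $\cBG_\alpha(n)=\alpha u_{\{n\}}+\cC_\alpha(n)$. The balancing identity $\sum_{S\in\cB,\,S\ni n}\lambda_S^\cB=1$ shows that $\alpha u_{\{n\}}$ already saturates the $\cB$-inequality, so a ray $r$ of $\cC_\alpha(n)$ lies on $F_\cB$ iff $\sum_{S\in\cB}\lambda_S^\cB r(S)=0$.

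Using the list from Theorem~\ref{th:6}, the $n-1$ lineality rays $w_i$ always lie on $F_\cB$; a direct check gives $r_S=-\delta_S\in F_\cB$ iff $S\notin\cB$; the identities $\sum_{T\in\cB,\,T\ni i}\lambda_T^\cB = \sum_{T\in\cB,\,T\ni n}\lambda_T^\cB = 1$ yield $\sum_{T\in\cB}\lambda_T^\cB r_i(T) = -\lambda_{\{i\}}^\cB$ when $\{i\}\in\cB$ and $0$ otherwise, for each $i\in N\setminus\{n\}$, so $r_i\in F_\cB$ iff $\{i\}\notin\cB$; likewise $r_n\in F_\cB$ iff $\{n\}\notin\cB$. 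With $k_\cB$ denoting the number of singletons in $\cB$, the count
\[
(n-1)+(2^n-n-2-|\cB|+k_\cB)+(n-k_\cB)=2^n+n-3-|\cB|\geqslant 2^n-3
\]
(using $|\cB|\leqslant n$) yields enough candidate rays on $F_\cB$.

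The main obstacle is linear independence. I would extract a triangular subsystem analogous to that of Theorem~\ref{th:bgfacet}: $\delta_{\{i\}}$ ($i\neq n$) is the unique coordinate carried by $w_i$ among the rays on $F_\cB$; $\delta_S$ with $|S|>1$, $S\notin\cB$, is carried by $r_S$; $\delta_{\{n\}}$ is carried by $r_n$ when available; and each $\delta_S$ with $S\in\cB$, $|S|>1$, is absorbed by some $r_i$ with $\{i\}\notin\cB$, choosing $i\in S\setminus\{n\}$ when $n\notin S$ and $i\in N\setminus S$ when $n\in S$. If no such $i$ existed, one would exhibit a proper balanced sub-collection of $\cB$ (by removing $S$ and re-weighting the implicated singletons, exactly as in the proof of Theorem~\ref{th:bgfacet}), contradicting $\cB\in\gB^*(n)$. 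This produces the required $2^n-3$ independent rays and hence $F_\cB$ is a facet; since distinct m.b.c.'s have distinct (uniquely determined) weights they yield distinct defining inequalities, which establishes the claimed bijection.
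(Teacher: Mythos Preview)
Your approach is the same as the paper intends (the paper simply says the proof is similar to that of Theorem~\ref{th:bgfacet} and omits it), and the counting and membership checks for the rays $w_i$, $r_S$, $r_i$, $r_n$ on $F_\cB$ are correct. However, the independence step has a genuine gap in the case $n\in S$.

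When $S\in\cB$, $|S|>1$ and $n\in S$, you look for $i\in N\setminus S$ with $\{i\}\notin\cB$, and claim that if none exists you can remove $S$ and re-weight singletons ``exactly as in the proof of Theorem~\ref{th:bgfacet}''. But that proof re-weights the singletons \emph{inside} $S$ (the hypothesis there is $S\subseteq N\setminus B$, i.e.\ $\{i\}\in\cB$ for all $i\in S$); your hypothesis is instead $\{i\}\in\cB$ for all $i\in N\setminus S$, so the same re-balancing is unavailable. What you actually get is that $\{S\}\cup(N\setminus S)^\bot\subseteq\cB$ is a balanced subcollection, which by minimality forces $\cB=\{S\}\cup(N\setminus S)^\bot$ --- not a contradiction. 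Concretely, for $n=3$ and $\cB=\{\{1,3\},\{2\}\}$, no $r_i$ on the face has a nonzero $\{1,3\}$-coordinate: $r_1(\{1,3\})=0$ (since $\{1,3\}\ni n$) and $r_3=-\delta_{\{3\}}$, while $r_2$ is not on the face. Your triangular scheme therefore leaves $\delta_S$ uncovered.

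The fix is easy: in this residual case, use the lineality rays instead. For any $j\in N\setminus S$ (necessarily $j\neq n$) one has $w_j(S)=-1$, so the $\delta_S$-column is still hit. A clean way to finish is to argue on the column side: if a functional $\ell(v)=\sum_T c_T v(T)$ vanishes on every ray of $F_\cB$, then $\ell(r_T)=0$ and $\ell(r_n)=0$ force $c_T=0$ for $T\notin\cB$, while $\ell(w_i)=0$ gives $\sum_{T\in\cB,\,T\ni i}c_T=\sum_{T\in\cB,\,T\ni n}c_T$ for all $i$, and uniqueness of balancing weights for a minimal $\cB$ then yields $c\propto\lambda^\cB$. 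This shows the rays on $F_\cB$ span a space of dimension exactly $2^n-3$, hence $F_\cB$ is a facet.
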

The proof is similar to the one of Theorem~\ref{th:bgfacet} and is omitted.

\section{The polytope $\cBG_+(n)$}\label{sec:bg+}

Finally, let us study $\cBG_+(n)$. Recall that $\cBG_+(n)$ is defined by (see (\ref{eq:bg+})):
\[
\cBG_+(n)=\Big\{v\in\cG_+(n)\mymid \sum_{S\in\cB}\lambda_S^\cB v(S)\leqslant 1,\, \forall \cB\in\gB^*(n)\Big\}.
\]

\subsection{Dimension, boundedness}
\begin{proposition}\label{pr:1}
$\cBG_{+}(n)$ is a $(2^n-2)$-dimensional polytope.
\end{proposition}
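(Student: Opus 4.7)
The plan has three parts. Since every $v\in\cBG_+(n)$ satisfies $v(N)=1$, the polyhedron lies in an affine hyperplane of $\cG(n)\cong\RR^{2^n-1}$, giving the upper bound $\dim\cBG_+(n)\leqslant 2^n-2$ for free. What remains is (i) to verify boundedness, so that $\cBG_+(n)$ is in fact a polytope, and (ii) to exhibit a point in the relative interior of $\cBG_+(n)$ inside that hyperplane, which will give the matching lower bound $\dim\cBG_+(n)\geqslant 2^n-2$.

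For (i), the plan is to exploit the special role of partitions among minimal balanced collections. Fix $\varnothing\neq S\subsetneq N$ and consider $\Pi_S=\{S\}\cup(N\setminus S)^\bot$, which is a partition of $N$, nontrivial since $n\geqslant 2$; every partition is an m.b.c.\ in $\gB^*(n)$ with all balancing weights equal to $1$, so the inequality (\ref{eq:bg+}) applied to $\Pi_S$ reads $\sum_{T\in\Pi_S}v(T)\leqslant 1$. Combined with $v\geqslant 0$ and $S\in\Pi_S$, this forces $0\leqslant v(S)\leqslant 1$, so every coordinate is confined to $[0,1]$ and $\cBG_+(n)$ is bounded.

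For (ii), a uniform perturbation of the unanimity game $u_N$ should serve. Define $v_\varepsilon$ by $v_\varepsilon(N)=1$ and $v_\varepsilon(S)=\varepsilon$ for $\varnothing\neq S\subsetneq N$. For any $\cB\in\gB^*(n)$ one has $\sum_{S\in\cB}\lambda^\cB_S v_\varepsilon(S)=\varepsilon\sum_{S\in\cB}\lambda^\cB_S$, and since $\gB^*(n)$ is finite the quantity $\max_\cB\sum_{S\in\cB}\lambda^\cB_S$ is finite as well. For $\varepsilon>0$ sufficiently small, every defining inequality of $\cBG_+(n)$ then holds strictly at $v_\varepsilon$ and every coordinate is strictly positive, so $v_\varepsilon$ is relatively interior. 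Consequently the affine hull of $\cBG_+(n)$ equals the hyperplane $\{v\mymid v(N)=1\}$ and $\dim\cBG_+(n)=2^n-2$.

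No step here is genuinely difficult; the only mildly non-obvious ingredient is the observation that partitions already belong to $\gB^*(n)$ with unit weights, which converts the balancedness inequalities for the right choice of $\cB$ directly into coordinate-wise upper bounds on $v(S)$.
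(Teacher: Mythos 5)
Your proof is correct, but both halves take a different route from the paper. For the lower bound on the dimension, the paper exhibits the $2^n-1$ unanimity games $u_S$, $S\neq\varnothing$, as affinely independent points of $\cBG_+(n)$, whereas you produce a single relatively interior point $v_\varepsilon$ at which all balancedness and nonnegativity inequalities are strict; both arguments are standard and valid (your perturbation argument has the mild side benefit of showing that none of the defining inequalities is an implicit equality, which is in the spirit of the later facet analysis, while the paper's choice of the $u_S$ reuses games that appear again as vertices). For boundedness, the paper argues via a core element: if $v(S)>1$ then any $x\in C(v)$ would satisfy $x(S)\leqslant x(N)=1$ by nonnegativity, contradicting $x(S)\geqslant v(S)$; you instead stay entirely within the H-description and use the inequality attached to the partition $\{S\}\cup(N\setminus S)^\bot$, which is indeed a minimal balanced collection with unit weights (a fact the paper itself records), together with $v\geqslant 0$, to get $v(S)\leqslant 1$ directly. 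Your version avoids invoking the Bondareva--Shapley characterization at this point, at the cost of the (easy) observation about partitions; the only cosmetic caveat is the trivial case $n=1$, where there are no proper nonempty coalitions and boundedness holds vacuously.
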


\begin{proof}
First, note that $\cBG_{+}(n)$ is defined by a set of linear equations. Hence,
it is a polyhedron of dimension at most $2^n-2$ as $v(N)=1$ is fixed. Moreover,
the $2^n-1$ unanimity games $u_S$, $S\neq \varnothing$ belong to $\cBG_{+}(n)$
(see Example~\ref{ex:1}) and they are  affinely independent because the
  $2^n-2$ games $u_S-u_N$, $S\neq\varnothing,N$, are linearly independent, so that the
polyhedron is $2^n-2$-dimensional.

Second, the polyhedron is bounded. To see this, note that if $v\in \cBG_{+}(n),$
it follows that $v(S)\leqslant 1,\forall S\ne \varnothing , N.$ Indeed, if
$v(S)>1$, as $v$ is balanced, there exists $x\in \RR^N$ such that $x(T)\geqslant
v(T)$ for all $T\subseteq N$, and $x(N)=1$. However, as $v(\{i\})\geqslant 0$,
we have $x_i\geqslant 0$, which implies $x(S)\leqslant 1$, a contradiction.
\end{proof}

\subsection{Vertices}

\begin{theorem}\label{th:1}
Consider $v\in \cG_+(n).$ Then, $v$ is a vertex of $\cBG_+(n)$ if and only if
$v$ is balanced and 0-1-valued.
\end{theorem}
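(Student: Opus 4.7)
The plan is to prove the two directions of the equivalence separately. For the easy direction ($v$ balanced and $0$-$1$-valued implies $v$ is a vertex), I use the standard extreme-point argument: assuming $v = \tfrac{1}{2}(v_1 + v_2)$ with $v_1, v_2 \in \cBG_+(n)$, by Proposition~\ref{pr:1} each $v_i(S) \in [0, 1]$, so on coalitions $S$ with $v(S) = 0$ nonnegativity forces $v_1(S) = v_2(S) = 0$, and on coalitions with $v(S) = 1$ the upper bound forces $v_1(S) = v_2(S) = 1$; hence $v_1 = v_2 = v$ and $v$ is extreme.

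The harder direction (\emph{vertex implies $0$-$1$-valued}) I attack by contrapositive: given $v \in \cBG_+(n)$ with some $T$ satisfying $0 < v(T) < 1$, I will exhibit $w \ne 0$ with $v \pm \epsilon w \in \cBG_+(n)$ for small $\epsilon > 0$. The pivotal observation is that for each $i \in N$, the unanimity game $u_{\{i\}}$ satisfies every balancedness inequality as an equality, since $\sum_{S \in \cB,\, S \ni i} \lambda_S^\cB = 1$ for every $\cB \in \gB^*(n)$ by the defining property of m.b.c.'s. Writing $Z = \{S \ne \varnothing, N : v(S) = 0\}$, if there exists $i \in N$ outside $\bigcup_{S \in Z} S$, then $u_{\{i\}}(S) = 0$ on all $S \in Z$ and every tight balancedness constraint of $v$ is automatically satisfied, so $u_{\{i\}}$ lies in the minimal face of $\cBG_+(n)$ containing $v$. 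Since this face must equal $\{v\}$ by vertex-hood, $v = u_{\{i\}}$, contradicting our assumption that $v$ is not $0$-$1$-valued.

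The remaining case is $\bigcup_{S \in Z} S = N$. Here I pick $x \in C(v)$ (nonempty by balancedness) and its additive game $v^x(S) = \sum_{i \in S} x_i \in \cBG_+(n)$. If $v = v^x$, then $v = \sum_i x_i u_{\{i\}}$ has at least two strictly positive coefficients (as $v$ is not $0$-$1$-valued), yielding a non-trivial convex decomposition of $v$ into distinct unanimity games. If $v \ne v^x$, take $w = v^x - v \ge 0$; then $w(N) = 0$ and $\sum_S \lambda_S^\cB w(S) = 0$ on every tight $\cB$ (because $\sum_S \lambda_S^\cB x(S) = \sum_i x_i = 1$ identically). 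This gives the feasible segment $\{(1 - \epsilon) v + \epsilon v^x : \epsilon \in [0, 1]\}$ inside $\cBG_+(n)$, but to witness $v$ as a midpoint I also need $v - \epsilon w \in \cBG_+(n)$, which can violate nonnegativity on $S \in Z$ with $x(S) > 0$.

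The main obstacle is this final step under $\bigcup_{S \in Z} S = N$, where no single core element can vanish on all of $Z$ (since $\sum_i x_i = 1$). To resolve it, I plan to exploit the rigidity forced by vertex-hood: comparing $\sum_S \lambda_S^\cB v(S) = 1 = \sum_S \lambda_S^\cB x(S)$ with $x \geq v$ pointwise yields that for every $x \in C(v)$ and every tight $\cB$, $x(S) = v(S)$ on $\cB$, and in particular $x(S) = 0$ on $\cB \cap Z$. Combined with the hypothesis $0 < v(T) < 1$, this should let me either produce two distinct extreme core allocations that agree on all of $Z$ (giving an additive direction $w = v^x - v^y$ that is automatically bidirectionally feasible, since additive games preserve every balancedness identity), or derive a contradiction from the rank of tight constraints at $v$. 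This is the subtlest piece of the argument.
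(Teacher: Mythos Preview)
Your $\Leftarrow$ direction is correct and matches the paper. For $\Rightarrow$, the paper's route is more direct than yours: fixing one $S$ with $0<v(S)<1$, it splits on whether some $x\in C(v)$ has $x(S)>v(S)$. If so, perturbing only the $S$-coordinate of $v$ by $\pm\epsilon$ works, since $x$ remains a core element of both perturbed games. If not, one picks $i\in S$ and $j\notin S$ with $x_i,x_j>0$ and perturbs $v$ along the additive direction $u_{\{i\}}-u_{\{j\}}$ while simultaneously shifting $x$ along $e_i-e_j$; the shifted allocation then certifies balancedness of the shifted game.

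Your Case~A and Case~B1 are correct, and Case~A is a clean structural shortcut that the paper does not use. The genuine gap is Case~B2. Plan~(a) fails in general: whenever $Z$ contains every singleton --- for instance any balanced $v$ with $v(\{i\})=0$ for all $i$ and some $0<v(T)<1$, say $n=3$, $v(12)=1$, $v(13)=\tfrac12$, $v(23)=0$, which is balanced and has $\bigcup Z=N$ --- the condition $x(S)=y(S)$ for all $S\in Z$ forces $x_i=y_i$ for every $i$, hence $x=y$, and no direction arises. Plan~(b) is not developed, and more fundamentally the underlying strategy of finding a nonzero $w$ annihilated by \emph{all} tight constraints is doomed at a purported vertex, where that kernel is $\{0\}$ by definition. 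The paper's second case escapes this trap precisely by \emph{not} staying in the minimal face: its direction $u_{\{i\}}-u_{\{j\}}$ is allowed to violate tight $v(\cdot)=0$ constraints, and membership of $v\pm\epsilon(u_{\{i\}}-u_{\{j\}})$ in $\cBG_+(n)$ is recovered by exhibiting the perturbed core witness $x\pm\epsilon(e_i-e_j)$ rather than by preserving every tight equality. To close your argument you need to replace Case~B2 with a perturbation of that kind.
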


\begin{proof}
$\Leftarrow)$  Suppose $v$ is balanced and 0-1-valued. Assume $v$ is not extremal. Then, there
  exist $v',v''\in \cBG_+(n)$ such that $v=\frac{1}{2}(v'+v'')$. Take (if it
  exists) $S\neq N$ such that $v(S)=1$.
We have already shown in the proof of Proposition~\ref{pr:1} that $v'(S)>1$ is impossible.
Similarly, $v'(S)<1$ is impossible as it
  imposes $v''(S)>1$. It follows that $v'(S)=v''(S)=1$ for all $S\subseteq N$
  such that $v(S)=1$.

  Take now $\varnothing\neq S\subset N$ such that $v(S)=0$. Taking $v'(S)>0$
  forces $v''(S)<0$, which is impossible. Therefore, $v'(S)=v''(S)=0$ for all
  $S\subset N$ s.t. $v(S)=0$. We conclude that $v'=v''=v$, i.e., $v$ is an
  extreme point.

  \medskip

  \noindent
  $\Rightarrow)$
Take $v$ balanced and extremal, and suppose by contradiction that there exists
$S\subset N$ s.t. $0<v(S)<1$. We distinguish two cases. Suppose first that there
exists some core element $x\in C(v)$ such that $x(S)>v(S)$. Then, consider the
two games $v',v''$ which differ from $v$ only inasmuch as $v'(S)=v(S)-\epsilon$
and $v''(S)=v(S)+\epsilon$, with $0<\epsilon<x(S)-v(S)$. Then $v',v''$ are
balanced since $x\in C(v')$ and $x\in C(v'')$. Therefore, $v$ is not extremal
as $v=\frac{1}{2}(v'+v'')$.

Suppose now that no such core element exists, i.e., $x(S)=v(S)$ for all $x\in
C(v)$. As $x(S)=v(S)<1,$ there exists $j\in N\setminus S$ such that $x_j>0$.
Similarly, as $0<v(S)=x(S),$ we conclude that there exists $i\in S$ such that
$x_i>0.$ We define the two games $v',v''$ by
\begin{align*}
v'(T) &=v(T)+\epsilon, \quad v''(T)=v(T)-\epsilon, \quad \forall T \text{ such
  that } i\in T,j\not\in T\\
v'(T) &=v(T)-\epsilon, \quad v''(T)=v(T)+\epsilon, \quad \forall T \text{ such
  that } j\in T,i\not\in T \\
v'(T) &=v(T), \quad v''(T)=v(T), \quad \text{ otherwise },
\end{align*}
with $\epsilon>0$ small enough so that $v'(T), v''(T)\in [0,1]$, and $0<x_i-\epsilon<x_i+\epsilon<1$, $0<x_j-\epsilon<x_j+\epsilon<1$.
Clearly, $v=\frac{1}{2}(v'+v'')$. Observe that
$v',v''$ are balanced, as $x',x''\in \RR^N$ defined by
\[
x'_i=x_i+\epsilon, x'_j=x_j-\epsilon, \quad x''_i=x_i-\epsilon, x''_j=x_j+\epsilon , \quad x'_k= x''_k = x_k, \forall k\ne i, j,
\]
 are core elements of $v'$ and $v''$, respectively.
Hence, $v$ is not extremal.
\end{proof}

Hence, a vertex $v$ of $\cBG_+(n)$ is characterized in terms of the subsets
$S\in 2^N\setminus\{\varnothing,N\}$ such that $v(S)=1$. Let us denote the family
of such subsets by $\cD.$ In next result we treat the
reciprocal problem, i.e., we give the necessary and sufficient conditions for a family of subsets $\cD$ in
$2^N\setminus \{ \varnothing , N\} $ to determine a vertex of $\cBG_+(n).$

\begin{theorem}\label{th:2}
Let $\cD$ be a family of subsets $\cD$ in $2^N\setminus \{ \varnothing , N\} .$ Then, $\cD$ defines a vertex of $\cBG_+(n)$ iff either $\cD =\varnothing $ or $\bigcap \cD\neq\varnothing .$
\end{theorem}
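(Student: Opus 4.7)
The plan is to translate the combinatorial condition on $\cD$ into a geometric one about the core of an associated $0$--$1$ game. By Theorem~\ref{th:1}, a family $\cD\subseteq 2^N\setminus\{\varnothing,N\}$ defines a vertex of $\cBG_+(n)$ precisely when the $0$--$1$ game $v_\cD$ given by $v_\cD(S)=1$ if $S\in\cD\cup\{N\}$ and $v_\cD(S)=0$ otherwise is balanced. Since $v_\cD\geqslant 0$ and $v_\cD(N)=1$ by construction, the task reduces to showing that $C(v_\cD)\neq\varnothing$ iff either $\cD=\varnothing$ or $\bigcap\cD\neq\varnothing$.

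For the ``if'' direction I will exhibit an explicit core element. If $\cD=\varnothing$, then $v_\cD=u_N$, and any nonnegative $x\in\RR^N$ with $x(N)=1$ belongs to $C(v_\cD)$. If instead some $i\in\bigcap\cD$ exists, I will take $x=\1^{\{i\}}$: for every $S\in\cD$ one has $i\in S$, so $x(S)=1=v_\cD(S)$; for every other $S\subsetneq N$, $x(S)\geqslant 0=v_\cD(S)$; and $x(N)=1=v_\cD(N)$. Thus $x\in C(v_\cD)$ and $v_\cD$ is balanced.

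For the ``only if'' direction, assume $\cD\neq\varnothing$ and $v_\cD$ balanced, and pick any $x\in C(v_\cD)$. From $x_i\geqslant v_\cD(\{i\})\geqslant 0$ for all $i\in N$, combined with $x(N)=1$, the vector $x$ is a probability distribution on $N$. For each $S\in\cD$, the inequality $x(S)\geqslant v_\cD(S)=1=x(N)$ together with $x\geqslant 0$ forces $x_j=0$ for every $j\in N\setminus S$, i.e.\ $\supp(x)\subseteq S$. Intersecting over $S\in\cD$ yields $\supp(x)\subseteq\bigcap\cD$, and since $x(N)=1$ implies $\supp(x)\neq\varnothing$, we conclude $\bigcap\cD\neq\varnothing$.

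There is no substantial obstacle here: once Theorem~\ref{th:1} reduces the question to nonemptiness of $C(v_\cD)$, nonnegativity and efficiency force every core element to be a probability vector supported in $\bigcap\cD$, which simultaneously furnishes the explicit core element in the ``if'' direction and the certificate in the ``only if'' direction.
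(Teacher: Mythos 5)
Your proof is correct, and the reduction via Theorem~\ref{th:1} to the question ``is the $0$--$1$ game $v_\cD$ balanced?'' is exactly how the paper frames it; the ``if'' direction (take $x=\1^{\{i\}}$ for $i\in\bigcap\cD$, or any efficient nonnegative vector when $\cD=\varnothing$) is also the paper's argument. Where you genuinely diverge is the ``only if'' direction: the paper argues by contradiction, taking a \emph{maximal} subfamily $B_1,\ldots,B_r\in\cD$ with nonempty intersection $T$, using a counting inequality (each $i\in N\setminus T$ lies in at most $r-1$ of the sets $B_i\setminus T$) to rule out $r>1$, and then handling $r=1$ via two disjoint sets $S,S'\in\cD$ forcing $x(N)\geqslant 2$. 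You instead observe directly that any $x\in C(v_\cD)$ satisfies $x\geqslant 0$, $x(N)=1$, and $x(S)\geqslant 1$ for each $S\in\cD$, so $x(N\setminus S)=0$ and hence $\supp(x)\subseteq S$ for every $S\in\cD$; intersecting gives $\varnothing\neq\supp(x)\subseteq\bigcap\cD$. This is shorter and cleaner than the paper's case analysis, avoids the maximal-family device entirely, and as a byproduct essentially re-derives the core description of vertices in Proposition~\ref{prop:v} (every core element is a probability vector supported on $\bigcap\cD$), whereas the paper obtains that statement separately afterwards.
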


\begin{proof}
The case $\cD =\varnothing $ defines $v(S)=0,\forall S\subset N$, which is
clearly balanced. For the rest of the proof we consider $\bigcap
\cD\neq\varnothing $.

Choose some family $\cD$ s.t. $\bigcap \cD\ni i$ for some $i\in N$, and construct
the corresponding $v$. By definition, $v$ is 0-1-valued. Hence, by
Theorem~\ref{th:1} we just have to check balancedness. Take $x\in \RR^N$
s.t. $x_i=1$, and $x_j=0$ for $j\neq i$. Then $x\in C(v)$.

Conversely, suppose $\cD\neq\varnothing$ and $\bigcap\cD=\varnothing$. We show
that the game $v$ corresponding to $\cD$ is not balanced. Consider a maximal
family $B_1,\ldots, B_r\in \cD$ s.t. $B_1\cap \cdots \cap B_r=T\ne \varnothing
$. Suppose $C(v)\neq\varnothing$ and take a core element $x\in C(v)$.
As $x(B_i)\geqslant v(B_i)=1$, $x\geqslant 0$ and $x(N)=1$, it follows that
$x(B_i)=1$ for $i=1,\ldots,r$. Therefore
\[
x(B_i\setminus T) = 1- x(T), \quad i=1,\ldots, r.
\]
Summing up we find
\[
 (r-1) \sum_{i\not\in T} x_i \geqslant \sum_{i=1}^r x(B_i\setminus T) = r(1-x(T)),
\]
where the first inequality comes from the fact that each $i\in N\setminus T$
belongs at most to $r-1$ sets in the family $B_1\setminus T,\ldots, B_r\setminus
T$. If $r>1$, we obtain
\[
\sum_{i\not\in T}x_i\geqslant \frac{r}{r-1}(1-x(T))>1-x(T),
\]
which implies $x(N)>1$, a contradiction. Therefore, it must be that $r=1$, i.e.,
there exist $S,S'\in\cD$ such that $S\cap S'=\varnothing$. As $v(S)=v(S')=1$, a
core element $x$ should satisfy $x(S)=x(S')=1$, which implies $x(N)\geqslant 2$,
a contradiction. As a conclusion, no core element $x$ exists.
\end{proof}

Note that the vertex corresponding to the empty collection is $u_N$, the
unanimity game centered on $N$ (equivalently, the Dirac game $\delta_N$).

The next result gives explicitely the core of each vertex.
\begin{proposition}\label{prop:v}
Let $v$ be a vertex of $\cBG_+(n)$, with associate collection $\cD$. If $\cD$ is
not the empty collection, then
\begin{equation}\label{eq:cv}
C(v) = \conv\Big\{\1^{\{i\}}\mymid i\in\bigcap\cD\Big\},
\end{equation}
which implies that the dimension of the core of $v$ is $|\bigcap\cD|-1$. If
$\cD$ is the empty collection, then $v=u_N$, whose core is the simplex
$\Delta(n):=\{x\in\RR^N_+\mymid \sum_{i\in N}x_i=1\}$.
\end{proposition}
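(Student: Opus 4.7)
The plan is to handle the empty collection case directly and then treat the generic case by a double inclusion argument. If $\cD = \varnothing$, the vertex is the game with $v(S)=0$ for all $S\subset N$ and $v(N)=1$, which is exactly $u_N = \delta_N$. The core condition reads $x(S)\geqslant 0$ for all $S$ and $x(N)=1$; since $v(\{i\})=0$ forces $x_i\geqslant 0$, this is precisely the simplex $\Delta(n)$.

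For $\cD\neq\varnothing$, set $I=\bigcap\cD$, which is nonempty by Theorem~\ref{th:2}. First I would show the easy inclusion $\conv\{\1^{\{i\}}\mymid i\in I\}\subseteq C(v)$: for each $i\in I$ the vector $\1^{\{i\}}$ is nonnegative, sums to $1=v(N)$, and for any $S\in\cD$ satisfies $\1^{\{i\}}(S)=1=v(S)$ since $i\in S$; for $S\notin\cD$ we have $v(S)=0\leqslant\1^{\{i\}}(S)$. Convexity of the core then yields the inclusion.

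For the reverse inclusion, take $x\in C(v)$. Because $v(S)=1$ for each $S\in\cD$ and $x\geqslant 0$ (from $x_j\geqslant v(\{j\})\geqslant 0$) together with $x(N)=1$, the core inequality $x(S)\geqslant 1$ forces $x(S)=1$, hence $x(N\setminus S)=0$ and consequently $x_j=0$ for every $j\notin S$. Intersecting over all $S\in\cD$, we conclude $x_j=0$ whenever $j\notin I$. Therefore $\sum_{i\in I}x_i = x(N) = 1$ with $x_i\geqslant 0$, which exhibits $x$ as a convex combination of the vectors $\1^{\{i\}}$, $i\in I$. This completes the double inclusion.

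Finally, the dimension statement follows because the points $\{\1^{\{i\}}\mymid i\in I\}$ are affinely independent (they form the vertex set of a standard simplex in $\RR^N$), so $C(v)$ is an $(|I|-1)$-dimensional simplex. The only small subtlety worth double-checking is that every $j\notin I$ really is excluded by \emph{some} $S\in\cD$ (not merely by the intersection), but this is immediate from the definition $I=\bigcap\cD$: if $j\notin I$ then by definition there exists $S\in\cD$ with $j\notin S$, which is exactly what the argument above uses.
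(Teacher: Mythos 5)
Your proof is correct and follows essentially the same route as the paper, whose proof is just the terse observation that $x\in C(v)$ iff $x(\bigcap\cD)=1$ (together with $x\geqslant 0$ and $x(N)=1$); your double-inclusion argument simply fills in the details behind that one-line characterization, including the correct handling of the empty collection and the affine-independence remark for the dimension claim.
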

\begin{proof}
$x$ is a core element iff $x(\bigcap\cD)=1$, from which the result follows.
\end{proof}

Proposition~\ref{prop:v} generalizes some known results for simple games. Recall
that a {\it simple game}  $v$ is a 0-1-valued game which is monotonic, i.e.,
$S\subseteq T$ implies $v(S)\leqslant v(T)$. The collection $\cD$ is called the
collection of {\it winning coalitions}, and  $\bigcap \cD$ is the
set of {\it veto players}. It is well known that the core of a simple game is empty if
and only if there is no veto player, and when nonempty, it is expressed by
(\ref{eq:cv}), see, e.g., \cite{pet08}. Our result is more general as not all
vertices of $\cBG_+(n)$ are simple games. We can also deduce the following
(a simple game is {\it proper}  if $S\in\cD$ implies $N\setminus S\not\in\cD$):
\begin{corollary}
If a voting game is balanced, then it is proper.
\end{corollary}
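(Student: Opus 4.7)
The plan is to deduce properness directly from the vertex characterization in Theorems~\ref{th:1} and~\ref{th:2}, via the existence of a veto player.

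First, I would observe that a balanced voting (=simple) game $v$ is by definition monotonic, $0$-$1$-valued, with $v(N)=1$ (hence nonnegative), and $v\in\cBG_+(n)$ since it is balanced. Thus Theorem~\ref{th:1} applies and identifies $v$ with a vertex of $\cBG_+(n)$. The associated collection is precisely $\cD=\{S\in 2^N\setminus\{\varnothing,N\}\mymid v(S)=1\}$, i.e., the family of winning coalitions distinct from $N$.

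Next, I would invoke Theorem~\ref{th:2}: either $\cD=\varnothing$ or $\bigcap\cD\neq\varnothing$. In the first case, only $N$ is winning (among the nontrivial coalitions), so the only winning coalition is $N$ itself; since $N\setminus N=\varnothing$ is not winning, the game is trivially proper. In the second case, pick a veto player $i\in\bigcap\cD$. For any winning coalition $S\neq N$, we have $S\in\cD$, hence $i\in S$, which forces $i\notin N\setminus S$; consequently $N\setminus S$ cannot contain the veto player, so $N\setminus S\notin\cD$, and as $N\setminus S\neq N$ (because $S\neq\varnothing$), it is a losing coalition. For $S=N$, properness is automatic since $N\setminus N=\varnothing$ is losing by definition of a game.

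There is essentially no obstacle: the nontrivial content is entirely packaged in Theorem~\ref{th:2}, which guarantees a nonempty intersection of the winning family; after that only an elementary case analysis on $\cD=\varnothing$, $S=N$, and the generic case $S\in\cD$ is needed. The only point to handle carefully is the bookkeeping on the trivial coalitions $\varnothing$ and $N$, which are not in $\cD$ by convention but still need to be treated to establish properness in the full sense ``$S$ winning $\Rightarrow N\setminus S$ losing''.
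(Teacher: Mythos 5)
Your proof is correct and follows essentially the same route as the paper: both reduce the statement to the vertex characterization of Theorem~\ref{th:2} (nonempty intersection of the winning family $\cD$, via Theorem~\ref{th:1}), the only difference being that the paper argues by contradiction (two complementary winning coalitions would force $\bigcap\cD=\varnothing$) while you argue directly through a veto player, with a slightly more careful treatment of the trivial cases $\cD=\varnothing$ and $S=N$.
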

\begin{proof}
Suppose there exists a balanced voting game $v$ that is not proper. Then, there exists $S$ s.t. $S, N\setminus S\in \cD.$ But then, $\bigcap \cD =\emptyset, $ contradicting Theorem~\ref{th:2}.
\end{proof}

\subsection{Vertex enumeration and generation}
We deduce from Theorem~\ref{th:2} that the enumeration of vertices amounts to
the enumeration of the collections $\cD$ whose intersection is nonempty. In the
next result we obtain a recursive formula to compute the number of vertices of
$\cBG_{+}(n)$, which we denote by $b_n.$ Also, for further use, we introduce a
number of notations. We denote by $\cA_N$ the set of all collections of sets in
$2^N \setminus \lbrace \varnothing, N \rbrace$, including the empty collection. The cardinality of
$\cA_N$ is $t_n=2^{2^n-2}.$ We introduce also $\cF_N$ and $\cS_N$ the set of all
{\it nonempty} collections
in $\cA_N$ with a nonempty intersection, and empty intersection,
respectively. Their cardinalities are denoted by $f_n$ and $s_n$,
respectively. We have by definition $t_n=f_n+s_n+1$.

\begin{theorem}\label{thm3}
The number of vertices $b_n$ of $\cBG_{+}(n)$ is given by $b_n=f_n + 1$ where $f_n$ can be obtained recursively as follows:

$$f_n=\sum_{k=1}^{n-1} {n \choose k} \left( 2^{2^{k}-1} - f_{k} -1\right), \forall n>1 \text{ and } f_1=0.$$
\end{theorem}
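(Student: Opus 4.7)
The first statement $b_n = f_n + 1$ is immediate from Theorem~\ref{th:2}: vertices of $\cBG_+(n)$ are in bijection with the set $\{\varnothing\} \cup \cF_N$, where $\cF_N$ is the set of nonempty collections $\cD \subseteq 2^N\setminus\{\varnothing,N\}$ with $\bigcap\cD \neq \varnothing$; this has cardinality $1 + f_n$.

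For the recursion, I would partition $\cF_N$ according to the value $T := \bigcap \cD$. Since every $S \in \cD$ is a proper nonempty subset of $N$ and $\cD\neq\varnothing$, $T$ ranges over the proper nonempty subsets of $N$, i.e., $|T| = n-k$ for some $k \in \{1,\ldots,n-1\}$. For each $k$, there are $\binom{n}{n-k} = \binom{n}{k}$ choices of $T$, and by symmetry the count of collections with $\bigcap \cD = T$ depends only on $k = |N\setminus T|$.

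Now fix $T$ with $|N\setminus T| = k$ and set $M := N \setminus T$. The map $\cD \mapsto \cK := \{S\setminus T : S\in\cD\}$ is a bijection from $\{\cD\in\cF_N : \bigcap\cD=T\}$ onto the set of collections $\cK$ of \emph{proper} subsets of $M$ (i.e. $K \subseteq M$, $K\neq M$, possibly $K=\varnothing$) satisfying $\cK\neq\varnothing$ and $\bigcap\cK=\varnothing$. I would split this count on whether $\varnothing \in \cK$:
\begin{itemize}
\item If $\varnothing \in \cK$, then $\bigcap\cK=\varnothing$ is automatic. Such $\cK$ are of the form $\{\varnothing\}\cup\cK'$ with $\cK'\subseteq 2^M\setminus\{\varnothing,M\}$, free of constraints. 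Since $|2^M\setminus\{\varnothing,M\}|=2^k-2$, this gives $2^{2^k-2}$ collections.
\item If $\varnothing\notin \cK$, then $\cK$ is a nonempty element of $\cA_M$ with empty intersection, i.e., $\cK\in\cS_M$, contributing $s_k = t_k - f_k - 1 = 2^{2^k-2} - f_k - 1$ collections.
\end{itemize}
Adding these yields $2^{2^k-1} - f_k - 1$ collections per choice of $T$. Summing over $k$ and $T$ gives the advertised recursion. Finally $f_1=0$ is checked directly: for $n=1$, $2^N\setminus\{\varnothing,N\}=\varnothing$, so $\cA_N=\{\varnothing\}$ and $\cF_N=\varnothing$.

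The main delicate point will be getting the combinatorics right when $\varnothing$ is allowed as a member of $\cK$: it must be treated as a valid ``proper subset'' of $M$ that, on its own, forces the intersection to be empty and is therefore unconstrained, while in the $\varnothing\notin\cK$ case one recovers exactly the quantity $s_k$ already isolated by the notation $\cA_M,\cF_M,\cS_M$. Keeping track of which of the sets $\varnothing, M, N$ are excluded at each level of the recursion is the only real bookkeeping hazard.
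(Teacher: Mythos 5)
Your proof is correct and takes essentially the same approach as the paper: you partition the collections by their intersection $T=\bigcap\cD$ and split according to whether $T\in\cD$ (your case where $\varnothing$ lies in the reduced collection on $N\setminus T$), recovering exactly the counts $t_k$ and $s_k=t_k-f_k-1$ used there. The only differences are cosmetic: you index by $k=|N\setminus T|$ from the start instead of changing variables at the end, and you absorb the case $\cD=\{T\}$ into the same branch rather than treating it separately.
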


\begin{proof}
 Vertices of $\cBG_{+}(n)$ are in bijection with
collections $\cD \in \cA_N$ such that $\bigcap \cD \neq \varnothing$,
plus the empty collection (associated to $u_N$).

Now, observe that $f_n$ can be split by considering the different choices for
$\bigcap \cD.$ Suppose that the intersection is the set $S,$ that is, $\bigcap
\cD :=S,$ with $|S|=k.$ There are two possibilities: either $S\in \cD$ or $S
\notin \cD.$ If $S\notin \cD,$ then $\cD$ is associated to a nonempty
collection $\cB$ given by $\cB =\{ B\setminus S: B\in \cD \} $. Observe that
$\cB \in \cS_{N\setminus S}$.  If $S\in \cD ,$ then $\cD$ is
associated to a collection $\cB \in \cA_{N\setminus S}$ given by $\cB =\{
B\setminus S: B\in \cD \}$ if $\cD\neq\{S\}$, and $\cB=\varnothing$
otherwise. This way, summing over all $S$, we obtain:

\[
f_n=\sum_{k=1}^{n-1} {n \choose k} \left( s_{n-k} + t_{n-k} \right) = \sum_{k=1}^{n-1} {n \choose k} \left( 2t_{n-k} - f_{n-k} -1\right) = \sum_{k=1}^{n-1} {n \choose k} \left( 2^{2^{n-k}-1} - f_{n-k} -1\right).
\]

Making the change $k'=n-k$ and using ${n \choose k}={n \choose n-k},$ the result holds.
\end{proof}

The first values of $b_n$ can be seen in Table \ref{Tabla_vert_bg}. As we can see, the number of vertices grows exponentially.

\begin{table}[h]
\begin{center}
\begin{tabular}{|c|cccccccccc|}
\hline $n$ & $1$ & $2$ & $3$ & $4$ & $5$ & $6$ & $7$ & $8$ & $9$ & $10$\\

\hline $b_n$ & $1$ & $3$ & $19$ & $471$ & $162631$ & $12884412819$ & $6.456e19$ & $1.361e39$ & $5.210e77$ & $6.703e154$\\

\hline
\end{tabular}
\caption{Number of vertices of $\cBG_{+}(n)$}
\label{Tabla_vert_bg}
\end{center}
\end{table}

\begin{remark}
The integer sequence of the number of vertices given in
Table~\ref{Tabla_vert_bg} happens to be already known: it
appears as sequence A051381 in the On-line Encyclopedia of Integer Sequences
(OEIS) \cite{oeis}, and is referred to as the ``number of Boolean functions of
$n$ variables from Post class $F(5,inf)$''. The founding paper is by Jojovi\'c
and Kilibarda \cite{joki99}. It gives an explicit (non-recursive) formula for
this sequence, shown in a very general context and with a long proof. We have
kept our recursive formula with its short proof, as we need it hereafter for random
vertex generation.
\end{remark}

Table \ref{vert_n=3} enumerates the 19 vertices for $n=3.$ Vertices for $n=4$
are available as supplementary material. For simplicity, braces and
commas are omitted for writing sets, e.g., 12 instead of $\{1,2\}$, etc. Also, we have used the following notation: for any
collection $\cC\subseteq 2^N\setminus\{\varnothing,N\}$, we define the game
\[
d_\cC:=\sum_{S\in\cC}\delta_S + \delta_N,
\]
e.g., $d_{1,12}=\delta_1+\delta_{12}+\delta_{123}$.

\begin{table}[h]
\begin{center}
\begin{tabular}{|c|cccccc|}
\hline Vertices  & 1 & 2 & 3 & 12 & 13 & 23 \\
\hline $u_{123}$ & 0 & 0 & 0 & 0   & 0   & 0   \\
       $u_1$     & 1 & 0 & 0 & 1   & 1   & 0   \\
       $u_2$     & 0 & 1 & 0 & 1   & 0   & 1   \\
       $u_3$     & 0 & 0 & 1 & 0   & 1   & 1   \\
       $u_{12}$  & 0 & 0 & 0 & 1   & 0   & 0   \\
       $u_{13}$  & 0 & 0 & 0 & 0   & 1   & 0   \\
       $u_{23}$  & 0 & 0 & 0 & 0   & 0   & 1   \\
$u_{12}\vee u_{13}$& 0 & 0 & 0 & 1   & 1   & 0   \\
$u_{12}\vee u_{23}$& 0 & 0 & 0 & 1   & 0   & 1   \\
$u_{13}\vee u_{23}$& 0 & 0 & 0 & 0   & 1   & 1   \\
       $d_1$     & 1 & 0 & 0 & 0   & 0   & 0   \\
       $d_2$     & 0 & 1 & 0 & 0   & 0   & 0   \\
       $d_3$     & 0 & 0 & 1 & 0   & 0   & 0   \\
       $d_{1,12}$& 1 & 0 & 0 & 1   & 0   & 0   \\
       $d_{1,13}$& 1 & 0 & 0 & 0   & 1   & 0   \\
       $d_{2,12}$& 0 & 1 & 0 & 1   & 0   & 0   \\
       $d_{2,23}$& 0 & 1 & 0 & 0   & 0   & 1   \\
       $d_{3,13}$& 0 & 0 & 1 & 0   & 1   & 0   \\
       $d_{3,23}$& 0 & 0 & 1 & 0   & 0   & 1   \\
\hline
\end{tabular}
\caption{List of vertices of $\cBG_{+}(n)$ for $n=3$. ``$\vee''$ indicates the
  maximum.}
\label{vert_n=3}
\end{center}
\end{table}

Based on the result of Theorem \ref{thm3} and its proof, we develop an algorithm that
generates vertices of $\cBG_{+}(n)$ uniformly at random. For this,
consider the recursive expression (see proof of Theorem~\ref{thm3})
\begin{equation}\label{recursive}
  f_n=\sum_{k=1}^{n-1} {n \choose k} \left( s_{n-k} + t_{n-k} \right) .
\end{equation}
This formula comes from the following fact. Suppose $\bigcap\cD=S$, with
$|S|=k$. If $S\in \cD$, removing $S$ from $\cD$ and from each set in $\cD$
yields a collection in $\cA_{N\setminus S}$, while if $S\not\in\cD$, removing
$S$ from each set in $\cD$ yields a collection in $\cS_{N\setminus S}$.
Note that generating a collection $\cD$ randomly in $\cA_N$ is very
simple. It suffices to sort all $2^n-2$ sets (for example lexicographically),
and generate a vector of $2^n-2$ zeros and ones, the ones marking the sets that
are in $\cD.$

The algorithm goes as follows. First, we consider the possibility of
drawing the vertex $u_N$. As $b_n=f_n+1,$ we choose $u_N$ with
probability

$$p_0(n) := \dfrac{1}{1+f_n}.$$

Table \ref{Tabla_p0} shows the first values for $p_0(n).$ As it can be seen, this value is almost zero for $n> 4.$

\begin{table}[h]
\begin{center}
\begin{tabular}{|c|ccccccccc|}
\hline $n$ & $1$ & $2$ & $3$ & $4$ & $5$ & $6$ & $7$ & $8$ & $9$ \\

\hline $p_0(n)$ &  $1$ & $0.333$ & $0.052$ & $0.002 $ & $6.14e-06$ & $7.76e-11$ & $1.54e-20$ & $7.34e-40$ & $1.91e-78$ \\

\hline
\end{tabular}
\caption{First values of $p_0(n)$}
\label{Tabla_p0}
\end{center}
\end{table}

If $u_N$ is not chosen, then we must generate a random element $\cD$ of
$\cF_N.$ Using (\ref{recursive}), we start by choosing the number of elements in $\bigcap \cD.$
The probability of $|\bigcap \cD |=k$ is given by

$$p^k_1(n)=\dfrac{{n \choose k} \left( s_{n-k} + t_{n-k} \right)}{f_n}.$$

Table \ref{Tabla_p1} shows the first values of $p^k_1(n).$

\begin{table}[h]
\begin{center}
\begin{tabular}{|c|cccccc|}
\hline $k$ & $1$ & $2$ & $3$ & $4$ & $5$ & $6$ \\

\hline $p_1^k(2)$ & $ 1$ & $ $ & $ $ & $ $ & $ $ & $ $  \\

\hline $p_1^k(3)$ & $0.83$ & $0.17$ & $ $ & $ $ & $ $ & $ $   \\

\hline $p_1^k(4)$ & $0.93$ & $ 0.06$ & $0.01 $ & $ $ & $ $ & $ $\\

\hline $p_1^k(5)$ & $0.993 $ & $0.006 $ & $0.001 $ & $ \approx 0$ & $ $ & $ $  \\

\hline $p_1^k(6)$ & $0.999 $ & $ \approx 0$ & $\approx 0 $ & $ \approx 0$ & $\approx 0 $ & $ $  \\

\hline $p_1^k(7)$ & $ \approx 1$ & $ \approx 0$ & $\approx 0 $ & $\approx 0 $ & $\approx 0 $ & $ \approx 0$  \\

\hline
\end{tabular}
\caption{First values of $p^k_1(n)$}
\label{Tabla_p1}
\end{center}
\end{table}

As we can see, by far the most likely is that the intersection of the sets of
$\cD$ has cardinal 1. For $n\geq 5,$ we may consider the probability
distribution $p^k_1(n)$ to be approximately the Dirac distribution at $k=1$.

The next step in the algorithm is to choose a set $S$ of cardinal $k$ at random
among the ${n \choose k}$ possibilities, then to decide if either $S\in \cD$ or $S\notin \cD$. For a given selected $k,$ the  probability of $S\in \cD$ is given by

$$p_2(n-k)=\dfrac{ t_{n-k}}{s_{n-k} + t_{n-k}}.$$

Table \ref{Tabla_p2} gives the first values of $p_2(n).$

\begin{table}[h]
\begin{center}
\begin{tabular}{|c|ccccccccc|}
\hline $n$ & $1$ & $2$ & $3$ & $4$ & $5$ & $6$ & $7$ & $8$ & $9$ \\

\hline $p_2(n)$ &  $1$ & $0.80$ & $0.58$ & $0.51 $ & $0.50$ & $0.50$ & $0.50$ & $0.50$ & $0.50$ \\

\hline
\end{tabular}
\caption{First values of $p_2(n)$}
\label{Tabla_p2}
\end{center}
\end{table}

As we can see, for $n\geq 5,$ the probabilities of whether the set $S$ is
in the selected collection $\cD$ or not are approximately the same.
If $S\in \cD,$ it suffices to generate uniformly an element $\cA$ in
$\cA_{N\setminus S}$ with the procedure described above and the final collection will be

$$\cD=\lbrace  S \rbrace \bigcup_{A \in \cA} \lbrace S \cup A \rbrace .$$

If the set $S\notin \cD$, it suffices to generate a random element $\cS$ of
$\cS_{N\setminus S}.$ To achieve this task, we use  the fact that the quotient $s_n/t_n$ is almost 1 for $n\geq 5,$ as it can be seen in Table \ref{Tabla_s_t}.

\begin{table}[h]
\begin{center}
\begin{tabular}{|c|ccccccccc|}
\hline $n$ & $1$ & $2$ & $3$ & $4$ & $5$ & $6$ & $7$ & $8$ & $9$ \\

\hline $s_n/t_n$ &  $0$ & $0.25$ & $0.70$ & $0.97 $ & $0.99$ & $\approx 1$ & $\approx 1$ & $\approx 1$ & $\approx 1$ \\

\hline
\end{tabular}
\caption{First values of $s_n/t_n$}
\label{Tabla_s_t}
\end{center}
\end{table}

Hence, for $n\geq 3,$ an element $\cS$ in $\cS_N$ can be generated uniformly at
random via a rejection sampler on $\cA_N.$ That is, we generate an element $\cA$
of $\cA_N.$ If $\cA \in \cS_N,$ then we return $\cA$. Otherwise $\cA
\notin \cS_N$ and we repeat the procedure until we obtain an element in
$\cS_N.$ For $n\leq 2,$ we have $s_1=0$ and $s_2=1,$ so these cases are
trivial. Once $\cA\in \cS_N$ is generated, the chosen collection is

$$\cD =\bigcup_{A \in \cA} \lbrace S \cup A \rbrace.$$

We give below a summary of this algorithm.
\begin{algorithm}[h]
\begin{algorithmic}
\caption{VERTEX SAMPLER FOR $\cBG_{+}(n)$ }\label{algor_vertex_sampler}
\State \textbf{Step 1:} With probability $p_0(n)$ \textbf{return} $\cD
=\varnothing$ (vertex $u_N$). With probability $1-p_0(n)$ go to Step 2.
\State \textbf{Step 2:} Choose some $k\in \lbrace 1, \ldots, n-1 \rbrace$ with probability $p_1^k(n)$. Also, generate a set $S\subset N$ with $k$ elements. Go to Step 3.
\State \textbf{Step 3:} With probability $p_2(n-k)$ go to Step 4 and with probability $1-p_2(n-k)$ to Step 5.
\State \textbf{Step 4:} Generate at random some $\cA \in \cA_{N\setminus S}.$ Then \textbf{return} $\cD$ such that:

$$\cD =\lbrace  S \rbrace \bigcup_{A \in \cA} \lbrace S \cup A \rbrace.$$

\State \textbf{Step 5:} Generate at random (by rejection) some $\cA \in
\cS_{N\setminus S}.$ Then \textbf{return} $\cD$ such that:

$$\cD=\bigcup_{A \in \cA} \lbrace S \cup A \rbrace.$$
\end{algorithmic}
 \end{algorithm}

\begin{lemma}
The previous algorithm generates a vertex of $\cBG_+(n)$ uniformly at random.
\end{lemma}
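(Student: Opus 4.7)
The plan is to show that for every vertex $v$ of $\cBG_+(n)$, the probability that Algorithm~\ref{algor_vertex_sampler} returns $v$ equals exactly $1/b_n$. By Theorems~\ref{th:1} and~\ref{th:2}, the vertices are in bijection with the disjoint union $\{\varnothing\}\sqcup \cF_N$, where $\varnothing$ corresponds to $u_N$, and since $b_n=1+f_n$, the target probability $1/b_n$ has to be matched in two cases.

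First, the easy case: the algorithm outputs $u_N$ only through Step~1, which occurs with probability $p_0(n)=1/(1+f_n)=1/b_n$. Second, let $\cD\in\cF_N$ be any nonempty collection with nonempty intersection, set $S:=\bigcap\cD$ and $k:=|S|\in\{1,\ldots,n-1\}$. I would invoke the bijections established inside the proof of Theorem~\ref{thm3}: the map $\cA\mapsto \{S\}\cup\{S\cup A\mymid A\in\cA\}$ is a bijection between $\cA_{N\setminus S}$ and $\{\cD'\in\cF_N\mymid\bigcap\cD'=S,\ S\in\cD'\}$, and $\cA\mapsto\{S\cup A\mymid A\in\cA\}$ is a bijection between $\cS_{N\setminus S}$ and $\{\cD'\in\cF_N\mymid\bigcap\cD'=S,\ S\notin\cD'\}$. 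Thus reconstructing $\cD$ from $S$ and an auxiliary collection is unambiguous, and the algorithm's branching exactly follows this decomposition.

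Now I would multiply probabilities along the unique path of the algorithm producing $\cD$. If $S\in\cD$, the corresponding $\cA\in\cA_{N\setminus S}$ is uniquely determined, the uniform generation in Step~4 hits it with probability $1/t_{n-k}$, and
\[
\bigl(1-p_0(n)\bigr)\cdot p_1^k(n)\cdot\frac{1}{\binom{n}{k}}\cdot p_2(n-k)\cdot\frac{1}{t_{n-k}} \;=\; \frac{f_n}{1+f_n}\cdot\frac{\binom{n}{k}(s_{n-k}+t_{n-k})}{f_n}\cdot\frac{1}{\binom{n}{k}}\cdot\frac{t_{n-k}}{s_{n-k}+t_{n-k}}\cdot\frac{1}{t_{n-k}} \;=\; \frac{1}{1+f_n}.
\]
If $S\notin\cD$, the unique $\cA\in\cS_{N\setminus S}$ is produced by the rejection sampler of Step~5 uniformly with probability $1/s_{n-k}$, and replacing $p_2(n-k)$ by $1-p_2(n-k)=s_{n-k}/(s_{n-k}+t_{n-k})$ and $1/t_{n-k}$ by $1/s_{n-k}$ yields the same telescoping to $1/(1+f_n)=1/b_n$. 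The only auxiliary fact needed is that rejection sampling on $\cA_{N\setminus S}$ with acceptance set $\cS_{N\setminus S}$ is uniform on $\cS_{N\setminus S}$, which is standard and terminates almost surely because $s_{n-k}/t_{n-k}>0$ for $n-k\geqslant 2$, while the boundary cases $n-k\in\{1,2\}$ are handled trivially (Step~5 is never reached for $k=n-1$ since $p_2(1)=1$, and $\cS_{\{i,j\}}$ has a single element).

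The main conceptual point is not an obstacle so much as a verification: the recursive identity $f_n=\sum_{k=1}^{n-1}\binom{n}{k}(s_{n-k}+t_{n-k})$ from Theorem~\ref{thm3} is exactly what makes the three nested choices (size $k$, then Step~4 vs.\ Step~5, then the auxiliary collection $\cA$) telescope cleanly; once the two bijections above are written out, everything reduces to arithmetic. No further ingredient is required.
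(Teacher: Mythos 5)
Your proposal is correct and follows essentially the same route as the paper: it verifies that the product of the branch probabilities $(1-p_0(n))\cdot p_1^k(n)\cdot\frac{1}{\binom{n}{k}}\cdot p_2(n-k)\cdot\frac{1}{t_{n-k}}$ (respectively with $1-p_2(n-k)$ and $1/s_{n-k}$) telescopes to $1/(1+f_n)=1/b_n$, and that $u_N$ is drawn with probability $p_0(n)=1/b_n$. Your added remarks on the bijections from Theorem~\ref{thm3}, the uniformity of the rejection sampler, and the boundary cases are sound elaborations of details the paper leaves implicit.
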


\begin{proof}
It suffices to show that any family $\cD$ defining a vertex of $\cBG_+(n)$ has probability $1/b_n.$ Suppose $S=\bigcap \cD$ and $S\in \cD.$ If $|S|=k,$ the probability of selecting $\cD$ in the algoritm is given by

$$P(\cD) = \dfrac{f_n}{1+f_n} \cdot \dfrac{{n \choose k} \left( s_{n-k} + t_{n-k} \right)}{f_n} \cdot \dfrac{1}{{n \choose k}} \cdot \dfrac{t_{n-k}}{s_{n-k} + t_{n-k}} \cdot \dfrac{1}{t_{n-k}} = \dfrac{1}{1+f_n}=\dfrac{1}{b_n}.$$

Similarly if $S\notin \cD$ we get:

$$P(\cD)=\dfrac{f_n}{1+f_n} \cdot \dfrac{{n \choose k} \left( s_{n-k} + t_{n-k} \right)}{f_n} \cdot \dfrac{1}{{n \choose k}} \cdot \dfrac{s_{n-k}}{s_{n-k} + t_{n-k}} \cdot \dfrac{1}{s_{n-k}} = \dfrac{1}{1+f_n}=\dfrac{1}{b_n}.$$

As the probability of selecting the empty family is $1/b_n,$ the result follows.
\end{proof}

Let us finally deal with the computational complexity of generating a vertex according to the previous algorithm. First, let us establish a result about the growth of the number of vertices in terms of $n$.

\begin{proposition}
The asymptotic growth rate of the number of vertices, $b_n$, in $\cBG_{+}(n)$ is:
\begin{equation}\label{eq:asyn_vert}
b_n=O(n2^{2^{n-1}}).
\end{equation}
\end{proposition}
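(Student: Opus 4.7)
The plan is to read the asymptotic order straight off of the recursion
$$f_n=\sum_{k=1}^{n-1}\binom{n}{k}\bigl(2^{2^{k}-1}-f_{k}-1\bigr)$$
established in Theorem~\ref{thm3}, by isolating the dominant term ($k=n-1$) and showing that all remaining terms are vanishingly small compared to it. Since $b_n=f_n+1$, it suffices to prove $f_n=O(n\,2^{2^{n-1}})$.

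First I would drop the negative contributions to obtain the easy upper bound
$$f_n\leqslant\sum_{k=1}^{n-1}\binom{n}{k}\,2^{2^{k}-1}.$$
I would then split this sum as $T_{n-1}+R_n$, where $T_{n-1}=\binom{n}{n-1}\,2^{2^{n-1}-1}=n\,2^{2^{n-1}-1}$ is already of the claimed order $n\,2^{2^{n-1}}$, and $R_n=\sum_{k=1}^{n-2}\binom{n}{k}\,2^{2^{k}-1}$ collects the remaining terms.

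The heart of the argument is to control $R_n$. Using the crude bounds $\binom{n}{k}\leqslant 2^n$ and $2^{2^{k}-1}\leqslant 2^{2^{n-2}-1}$ for every $k\leqslant n-2$, I would obtain $R_n\leqslant (n-1)\,2^{n}\,2^{2^{n-2}-1}$. Comparing with $T_{n-1}$,
$$\frac{R_n}{T_{n-1}}\leqslant\frac{(n-1)\,2^{n}\,2^{2^{n-2}-1}}{n\,2^{2^{n-1}-1}}=\frac{n-1}{n}\,2^{n}\,2^{-2^{n-2}},$$
which tends to $0$ doubly exponentially fast; in particular $R_n\leqslant T_{n-1}$ for all $n$ large enough. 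Combining, $f_n\leqslant 2T_{n-1}=n\,2^{2^{n-1}}$ eventually, so $b_n=f_n+1=O(n\,2^{2^{n-1}})$.

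No real obstacle is expected: the recursion makes the dominant scale explicit, and the doubly exponential gap between consecutive terms $2^{2^{k}-1}$ immediately swamps the polynomial factors $\binom{n}{k}$. The only mild subtlety is that the lower-order terms involve the unknown quantities $f_k$, but these can only \emph{decrease} the sum, so they are irrelevant for the upper bound proved here. (If one wished to strengthen this to a matching lower bound, one would induct to show $f_{n-1}\ll 2^{2^{n-1}-1}$ and conclude that $f_n\geqslant n\bigl(2^{2^{n-1}-1}-f_{n-1}-1\bigr)=\Theta(n\,2^{2^{n-1}})$, but the stated $O$-bound needs only the upper estimate above.)
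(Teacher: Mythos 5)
Your proposal is correct and follows essentially the same route as the paper: both read the bound off the recursion for $f_n$, bound the subtracted terms trivially, and observe that the single term with the largest double-exponential factor (your $T_{n-1}=n\,2^{2^{n-1}-1}$) dominates the rest because binomial coefficients are only singly exponential. Your explicit estimate of the tail $R_n$ is slightly more detailed than the paper's one-line dominance argument, but the idea and decomposition are the same.
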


\begin{proof}
As we know $b_n=f_n+1$ where $f_n$ can be written as:

$$f_n=\sum_{k=1}^{n-1} {n \choose k} \left( s_{n-k} + t_{n-k} \right) .$$
Since $s_k \leq t_k$ we get:

$$\sum_{k=1}^{n-1} {n \choose k}  t_{n-k} \leq f_n \leq 2\sum_{k=1}^{n-1} {n \choose k}  t_{n-k}.$$

Therefore,

$$b_n=O\left( \sum_{k=1}^{n-1} {n \choose k}  t_{n-k} \right).$$

Considering that $t_k=2^{2^k-2}$ the dominant term in the expression $\sum_{k=1}^{n-1} {n \choose k}  t_{n-k}$ is the one associated with $k=n-1$ since the binomial coefficients are polynomial. Thus,

 $$b_n=O\left( n2^{2^{n-1}-2} \right)=O\left( n2^{2^{n-1}} \right).$$
\end{proof}

The following result refers to the computational complexity of the worst-case computation time required to generate a vertex by the previous algorithm.

\begin{proposition}
The computational complexity of Algorithm 1 is $O(2^n).$
\end{proposition}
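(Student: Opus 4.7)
The plan is to bound the running time of each of the five steps of Algorithm~\ref{algor_vertex_sampler} separately and then identify the dominant cost. I assume that the auxiliary quantities $f_m,s_m,t_m$ for $m\leq n$, together with the probabilities $p_0(n)$, $p_1^k(n)$ and $p_2(m)$, have been precomputed (a one-off cost of $O(2^n)$ via the recursion of Theorem~\ref{thm3}). Then Steps~1 and~3 are single Bernoulli draws and cost $O(1)$, while Step~2 performs a categorical draw over $n-1$ possible values of $k$ followed by sampling a uniformly random $k$-subset of $N$, both in $O(n)$. Hence Steps~1--3 together contribute only $O(n)$.

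The bulk of the work lies in Step~4 or Step~5, both acting on the ground set $N\setminus S$ of cardinality $m:=n-k$. In Step~4 we draw a uniformly random element of $\cA_{N\setminus S}$ by flipping $2^m-2$ independent fair coins (one for each nonempty proper subset of $N\setminus S$) and then assembling the collection $\cD$; this takes $O(2^m)$ operations. In Step~5 we sample uniformly from $\cS_{N\setminus S}$ by rejection against $\cA_{N\setminus S}$: each trial costs $O(2^m)$ and the expected number of trials equals $t_m/s_m$.

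The technical crux is to show that the expected number of trials in the rejection sampler is $O(1)$ uniformly in the values of $m$ that can actually arise. For $m=1$ one has $s_1=0$, so $p_2(1)=1$ and Step~5 is never reached; for $m=2$, $\cS_{\{a,b\}}$ is a singleton and is returned directly without rejection, at cost $O(1)$. For $m\geq 3$, I would combine the identity $s_m=t_m-f_m-1$ with the asymptotic estimate $f_m=O(m\,2^{2^{m-1}})$ from the preceding proposition and the growth $t_m=2^{2^m-2}$: the ratio $f_m/t_m$ decays doubly exponentially, so $s_m/t_m$ is bounded below by an absolute positive constant (as is confirmed by Table~\ref{Tabla_s_t}, where $s_m/t_m\geq 0.70$ for $m\geq 3$). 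Hence the expected cost of Step~5 is $O(2^m)$, matching that of Step~4.

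Combining these bounds, the total expected running time is $O(n)+O(2^m)$, and the worst case over $k\in\{1,\ldots,n-1\}$ is attained at $k=1$, i.e.\ $m=n-1$, yielding $O(2^{n-1})=O(2^n)$, as claimed. The only genuine obstacle is the uniform lower bound on the acceptance probability of the rejection sampler, and that is settled by the rapid decay of $f_m/t_m$ established above.
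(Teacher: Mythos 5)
Your proof is correct and follows essentially the same route as the paper: unit-cost random draws, $O(1)$ for Steps 1 and 3, $O(n)$ for Step 2, $O(2^n)$ for generating a collection in Step 4, and a constant-factor overhead for the rejection in Step 5. The only difference is that you justify the rejection sampler's constant acceptance rate explicitly via the doubly exponential decay of $f_m/t_m$, where the paper simply asserts that the rejection rate vanishes quickly; this makes your expected-time bound slightly more rigorous but does not change the argument.
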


\begin{proof}
As usual, we will assume that generating a random number has a complexity of $O(1).$
We also assume that the probabilities $p_0(n), p_1^k(n), p_2(n)$ are known a priori. With these considerations, we know that steps 1 and 3 have a complexity of $O(1)$ since only a random number needs to be generated. For step 2, we can choose $k$ with complexity $O(n)$, and we can generate the set $S$ with complexity $O(n)$ as well. For step 4, we can generate an element of $\mathcal{A}_{N\setminus S}$ by listing all possible subsets and generating a 0 or 1 for each case, which means a complexity of $O(2^n).$ Finally, for step 5, as the rejection rate tends to zero very quickly, the rejection rate will only affect small
$n$, in which case it would multiply the computational complexity by a constant, $O(C2^n).$ Taking all this into account, the total computational complexity would be $O(2^n).$
\end{proof}

We can see that in relation to the asymptotic number of vertices (\ref{eq:asyn_vert}), the complexity is very reduced.

\subsection{Adjacency of vertices and related properties}
Recall that two vertices are adjacent if they both belong to the same edge
(1-dimensional face of the polytope). The aim of this section is twofold: first, we will see that $\cBG_+(n)$ is a combinatorial polytope and, as a consequence, that there exists a Hamiltonian path connecting each pair of vertices. Second, we will characterize
adjacency of vertices corresponding to collections $\cD$ such that their
intersection $\bigcap\cD$ is a singleton, which is by far the most common case,
according to Table~\ref{Tabla_p1}. In addition, these vertices have a core
reduced to a singleton. This characterization is given in
Theorems~\ref{th:a1} to \ref{th:a3}.


In order to prove that $\cBG_+(n)$ is Hamilton-connected, we first show that $\cBG_+(n)$ is a combinatorial polytope.
\begin{definition}\cite{napu81}
A polytope $\cP$ is said to be {\it combinatorial} if the two following conditions hold:

\begin{itemize}
\item All vertices of $\cP$ are 0-1-valued.
\item Given two vertices $v_1, v_2$ of $\cP$, if they are not adjacent, then there exist two other different vertices $v_3, v_4$ such that
$$ v_1 + v_2 = v_3 + v_4.$$
\end{itemize}
\end{definition}

For a combinatorial polytope, the following can be shown.

\begin{theorem}\cite{napu81}\label{Hamilton}
Let $G$ be the adjacency graph of a combinatorial polytope. Then $G$ is
either a hypercube or is Hamilton-connected.
\end{theorem}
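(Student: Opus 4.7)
The statement reproduces the main result of Naddef and Pulleyblank \cite{napu81}, so my sketch essentially follows their strategy. The plan is to argue by induction on the number of vertices of the combinatorial polytope $\cP$. The base cases (one or two vertices, and combinatorial polytopes of dimension at most two) are dispatched by hand: the $0$-$1$ constraint together with the exchange property forces a $2$-dimensional combinatorial polytope to be either a triangle or a $2$-cube.

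The key technical step is a face lemma: if $\cP$ is combinatorial and $i$ is a coordinate along which some vertex has value $0$ and some has value $1$, then the two faces
\[
F_0 = \cP \cap \{x_i = 0\}, \qquad F_1 = \cP \cap \{x_i = 1\}
\]
are themselves combinatorial polytopes, and their vertex sets $V_0,V_1$ partition the vertex set of $\cP$. The $0$-$1$ property is inherited. For the exchange property, two vertices of $F_0$ that are nonadjacent in $F_0$ are also nonadjacent in $\cP$, since an edge of $\cP$ joining them would lie in the supporting hyperplane $\{x_i=0\}$ and hence be an edge of $F_0$. The replacements $v_3,v_4$ supplied by the exchange property in $\cP$ then satisfy $(v_3)_i+(v_4)_i=0$, so both live in $F_0$; $F_1$ is handled symmetrically.

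For the inductive step, fix vertices $u,v$ of $\cP$ and seek a Hamiltonian $u$-$v$ path in the adjacency graph $G$. Choose a coordinate $i$ on which $u$ and $v$ disagree, so that $u\in V_0$ and $v\in V_1$. By induction $F_0$ and $F_1$ are each a hypercube or Hamilton-connected. A Hamiltonian $u$-$u'$ path in $F_0$, concatenated with a crossing edge $u'u''$ of $G$ and a Hamiltonian $u''$-$v$ path in $F_1$, yields the desired Hamiltonian path of $\cP$, provided a compatible pair $(u',u'')$ can be realized.

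The main obstacle, and the reason for the hypercube alternative in the conclusion, is producing such a compatible crossing. The exchange property must be used to reroute Hamiltonian paths inside each face so that their chosen endpoints become incident to some $V_0$-$V_1$ edge. The obstruction case — no rerouting yields a usable crossing — forces the bipartite graph of crossing edges to be a perfect matching that rigidly identifies $F_0$ with $F_1$; a recursive application of the lemma then identifies $\cP$ itself with a hypercube. Carrying out this endpoint-rerouting via repeated applications of the exchange axiom, and proving that the unique obstruction is indeed the hypercube, is the delicate combinatorial heart of the proof.
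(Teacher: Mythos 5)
This statement is quoted by the paper from Naddef and Pulleyblank \cite{napu81}; the paper itself gives no proof, so your proposal can only be compared with the original argument, whose broad strategy (induction, splitting along a $0$--$1$ coordinate) you have indeed reconstructed. Your face lemma is correct and correctly proved: $F_0=\cP\cap\{x_i=0\}$ and $F_1=\cP\cap\{x_i=1\}$ are faces because $0\leqslant x_i\leqslant 1$ are valid inequalities, adjacency inside a face agrees with adjacency in $\cP$, and the exchange vertices $v_3,v_4$ stay in the same face because $(v_3)_i+(v_4)_i$ equals $0$ or $2$ and vertices are $0$--$1$-valued.

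However, as a proof the proposal stops exactly where the work begins, and there are two genuine gaps. First, the concatenation scheme needs a crossing edge $u'u''$ between $F_0$ and $F_1$ whose endpoints can be reached as endpoints of Hamiltonian paths of the two faces; you only assert that the exchange axiom allows the endpoints to be ``rerouted'' and that the unique obstruction is a perfect matching of crossing edges identifying $F_0$ with $F_1$, which then makes $\cP$ a hypercube. Neither claim is argued, and the second one is essentially the entire content of the hypercube alternative in the theorem; naming it ``the delicate combinatorial heart'' does not discharge it. Second, your inductive step silently uses more than the induction hypothesis provides: if $F_0$ (or $F_1$) turns out to be a hypercube, its graph is bipartite, so Hamiltonian paths exist only between vertices of opposite parity classes, and a Hamiltonian $u$--$u'$ path with a freely chosen exit vertex $u'$ need not exist. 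The original proof must (and does) deal with this parity obstruction separately, e.g.\ by proving a stronger path statement by induction or by analyzing the cube case on its own; your sketch does not acknowledge it. So the skeleton is the right one, but what you have written is an outline of \cite{napu81}, not a proof of the theorem.
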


Now, the following holds.

\begin{lemma}\label{lem:a1}
Let $v_1,v_2$ be two distinct vertices of $\cBG_+(n)$, with associate
collections $\cD_1,\cD_2$. Then
$v_1,v_2$ are not adjacent if and only if there exist vertices $v_3,v_4$
distinct from $v_1,v_2$ such that
\[
v_1+v_2=v_3+v_4,
\]
In addition, the associated collections $\cD_3,\cD_4$ satisfy
\[
\cD_3\cup\cD_4=\cD_1\cup\cD_2, \quad \cD_3\cap\cD_4=\cD_1\cap\cD_2.
\]
\end{lemma}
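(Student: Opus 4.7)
The ``in addition'' claim is immediate from the $0$--$1$ structure of vertices (Theorem~\ref{th:1}): equating $v_1+v_2$ and $v_3+v_4$ pointwise and using that all four games take value $1$ on $N$, the sets on which the sum equals $2$ (resp.\ $0$) must coincide, giving $\cD_3\cap\cD_4=\cD_1\cap\cD_2$ and $\cD_3\cup\cD_4=\cD_1\cup\cD_2$. For the $\Leftarrow$ direction, the hypothesis yields a common midpoint $m=\tfrac12(v_1+v_2)=\tfrac12(v_3+v_4)$; letting $F$ be the smallest face of $\cBG_+(n)$ containing $v_1,v_2$, the standard absorbing property of faces (a convex decomposition of a point of $F$ into points of $\cBG_+(n)$ can only use points of $F$) forces $v_3,v_4\in F$, so $F$ has at least four distinct vertices, $\dim F\geqslant 2$, and $v_1,v_2$ are not adjacent.

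For the $\Rightarrow$ direction, assume $v_1,v_2$ are non-adjacent and write $A=\cD_1\cap\cD_2$, $B=\cD_1\setminus\cD_2$, $C=\cD_2\setminus\cD_1$. The first step is to show that every vertex $v$ of the minimal face $F$ has the form $\cD_v=A\cup E$ for some $E\subseteq B\cup C$. The inclusion $\cD_v\subseteq\cD_1\cup\cD_2$ follows from the tight non-negativity constraints $v(S)=0$ for $S\notin\cD_1\cup\cD_2$. For $A\subseteq\cD_v$: the partition $\{S,(N\setminus S)^\bot\}$ with $S\in A$ yields a balanced inequality tight at both $v_1,v_2$ (the $0$--$1$ structure together with Theorem~\ref{th:2} forces $v_i(\{j\})=0$ for $j\notin S\in\cD_i$), and a short argument ruling out $\{j\}\in\cD_v$ for $j\notin S$ (since any singleton in $\cD_1$ or $\cD_2$ must belong to its respective global intersection) forces $v(S)=1$, i.e.\ $S\in\cD_v$. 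The task then reduces to producing $E\subseteq B\cup C$, $E\neq B,C$, such that both $A\cup E$ and $A\cup((B\cup C)\setminus E)$ are valid collections, i.e., empty or with non-empty intersection (Theorem~\ref{th:2}).

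My plan is a veto-player argument: pick $i_1\in\bigcap\cD_1$ and $i_2\in\bigcap\cD_2$; the admissible $E$'s are those with $i_1$ in every set of $E$ and $i_2$ in every set of $(B\cup C)\setminus E$, and they form a Boolean-lattice interval in $2^{B\cup C}$ that always contains $B$. When $\bigcap\cD_1\cap\bigcap\cD_2\neq\varnothing$, taking $i_1=i_2$ in this intersection makes the interval equal to $2^{B\cup C}$, and non-adjacency forces $|B\cup C|\geqslant 2$ (else $F$ contains only $v_1,v_2$ and has dimension $\leqslant 1$), so an $E\neq B,C$ in the interval exists. The main obstacle is the case $\bigcap\cD_1\cap\bigcap\cD_2=\varnothing$ (which forces $B,C\neq\varnothing$): here existence of an $E\neq B$ in the interval for some choice $(i_1,i_2)$ reduces to finding an $S\in B\cup C$ containing both $i_1$ and $i_2$, and deriving this from non-adjacency is the technical heart of the proof. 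I expect to argue by contrapositive: if no such triple exists for any admissible $(i_1,i_2)$, an explicit linear functional maximized precisely on $[v_1,v_2]$ would witness adjacency of $v_1,v_2$, contradicting the hypothesis.
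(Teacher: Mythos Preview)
Your treatment of the ``in addition'' clause, the $\Leftarrow$ direction, and the case $\bigcap\cD_1\cap\bigcap\cD_2\neq\varnothing$ is correct, and your description of the vertices of the minimal face $F$ as those with collection $A\cup E$, $E\subseteq B\cup C$, is sound. The gap is in the remaining case: the contrapositive you plan to prove is false. For $n=4$, take $\cD_1=\{\{1,3\},\{1,4\}\}$ and $\cD_2=\{\{2,3\},\{2,4\}\}$. Then $\bigcap\cD_1=\{1\}$, $\bigcap\cD_2=\{2\}$, so $(i_1,i_2)=(1,2)$ is forced, and no set of $B\cup C$ contains both $1$ and $2$; your interval collapses to $\{B\}$. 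Yet $v_1,v_2$ are \emph{not} adjacent: the split $E=\{\{1,3\},\{2,3\}\}$ gives $\bigcap E=\{3\}$ and $\bigcap((B\cup C)\setminus E)=\{4\}$, so valid $v_3,v_4$ exist with $v_1+v_2=v_3+v_4$. The flaw is structural: your veto-player interval only detects splits whose two halves are witnessed by elements of $\bigcap\cD_1$ and $\bigcap\cD_2$, but a valid complementary split may use witnesses (here $3$ and $4$) lying outside $\bigcap\cD_1\cup\bigcap\cD_2$ altogether, so no linear functional built from your hypothesis can isolate $[v_1,v_2]$.

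The paper avoids this by not constructing $v_3,v_4$ directly. It starts from the characterization that $v_1,v_2$ are non-adjacent iff some $\lambda v_1+(1-\lambda)v_2$ equals some $\lambda' v_3+(1-\lambda')v_4$ with $v_3,v_4$ vertices distinct from $v_1,v_2$; a coordinate comparison then forces $\cD_3\cup\cD_4=\cD_1\cup\cD_2$ and $\cD_3\cap\cD_4=\cD_1\cap\cD_2$, and a short case split on how $\cD_3,\cD_4$ meet $\cD_1\setminus\cD_2$ and $\cD_2\setminus\cD_1$ yields $\lambda=\lambda'=\tfrac12$. If you want to rescue the face-based route, the witnesses for $\bigcap(A\cup E)$ and its complement must be allowed to range over all of $\bigcap A$ (or all of $N$ when $A=\varnothing$), not just over $\bigcap\cD_1$ and $\bigcap\cD_2$; but then the Boolean-interval structure is lost and a genuinely different argument is needed.
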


\begin{proof}
Let us consider two vertices $v_1,v_2$ with collections $\cD_1,\cD_2$. They are
not adjacent if and only if there exist $\lambda,\lambda'\in[0,1]$ and
  vertices $v_3,v_4$ distinct from $v_1,v_2$ such that
  \[
\lambda v_1 +(1-\lambda)v_2 = \lambda' v_3 + (1-\lambda') v_4=:v.
\]
Let us denote by $\cD_3,\cD_4$ the collections associated to $v_3,v_4$. We have the
following, using the decomposition into $v_1,v_2$:
\[
v(S) = \begin{cases}
  \lambda, & S\in\cD_1\setminus\cD_2\\
  1, & S\in\cD_1\cap \cD_2\\
  1-\lambda, & S\in\cD_2\setminus\cD_1\\
  0, & S\not\in\cD_1\cup\cD_2.
  \end{cases}
\]
Doing similarly with $v_3,v_4$, we deduce that $\cD_1\cup\cD_2=\cD_3\cup\cD_4$,
$\cD_1\cap\cD_2=\cD_3\cap\cD_4$. As $\cD_3,\cD_4$ are distinct from $\cD_1,\cD_2$, this
imposes that:
\begin{enumerate}
\item If $\cD_1\setminus\cD_2\neq\varnothing$ and
  $\cD_2\setminus\cD_1\neq\varnothing$, then either (a) $\cD_3=\cD_1\cup\cD_2$ and
$\cD_4=\cD_1\cap\cD_2$ (or the converse), or (b) at least one of these
  collections must intersect $\cD_3$ and $\cD_4$.
\item If $\cD_1\subset \cD_2$ then
  $\cD_2\setminus \cD_1$ must intersect $\cD_3$ and $\cD_4$;
\item  Similar when
  $\cD_2\subset \cD_1$.
\end{enumerate}
In any case, this implies
$\lambda=\lambda'=1-\lambda'=1-\lambda$, i.e., $\lambda=\lambda'=1/2$.
\end{proof}

Hence, as a consequence of Theorem \ref{th:1} and Lemma \ref{lem:a1} the following holds.

\begin{theorem}\label{path}
The polytope $\cBG_+(n)$ is combinatorial. Moreover, the adjacency graph of $\cBG_+(n)$ is Hamilton-connected.
\end{theorem}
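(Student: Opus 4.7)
The plan is to reduce the statement to two already-established results plus the external classification Theorem~\ref{Hamilton}. First, I would verify that $\cBG_+(n)$ satisfies both conditions in the definition of a combinatorial polytope. The condition that every vertex is $0$-$1$-valued is precisely the content of Theorem~\ref{th:1}. The condition that any two non-adjacent vertices $v_1,v_2$ admit a decomposition $v_1+v_2=v_3+v_4$ into a sum of two further distinct vertices is exactly the ``only if'' direction of Lemma~\ref{lem:a1}. Combining these two facts yields immediately that $\cBG_+(n)$ is combinatorial in the sense of Naddef and Pulleyblank.

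Second, I would invoke Theorem~\ref{Hamilton}, which says that the adjacency graph of any combinatorial polytope is either a hypercube or Hamilton-connected. To obtain the conclusion it remains to exclude the hypercube alternative for $n\geq 2$ (the case $n=1$ is trivially a single point and hence vacuously Hamilton-connected). Since a $d$-hypercube has $2^d$ vertices, it suffices to show that $b_n$ is not a power of $2$ when $n\geq 2$.

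For this exclusion I would use a parity argument based on the recursion obtained in Theorem~\ref{thm3}, namely
\[
f_n=\sum_{k=1}^{n-1}\binom{n}{k}\bigl(2^{2^k-1}-f_k-1\bigr).
\]
Since $2^{2^k-1}$ is even for every $k\geq 1$, working modulo~$2$ gives $f_n\equiv -\sum_{k=1}^{n-1}\binom{n}{k}(f_k+1)\pmod{2}$. Under the induction hypothesis that $f_k$ is even for all $k<n$, this reduces to $f_n\equiv -\sum_{k=1}^{n-1}\binom{n}{k}\equiv -(2^n-2)\equiv 0\pmod 2$. Hence $f_n$ is even, so $b_n=f_n+1$ is odd for every $n\geq 1$. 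Since $b_n\geq 3$ for $n\geq 2$, it cannot equal $2^d$, the hypercube case is ruled out, and the Hamilton-connectedness of the adjacency graph follows.

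I do not expect any serious obstacle. Essentially all the substantive work was done in Theorem~\ref{th:1} (vertex characterization) and Lemma~\ref{lem:a1} (the sum-decomposition characterization of non-adjacency); the present theorem is a short packaging of these facts with the external structural result of Naddef and Pulleyblank. The only step requiring any genuine attention is the exclusion of the hypercube case, and the parity induction above handles it cleanly and uniformly.
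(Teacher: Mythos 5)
Your proof is correct, and the first half coincides with the paper's: combinatoriality is read off from Theorem~\ref{th:1} (0-1 vertices) together with the ``only if'' direction of Lemma~\ref{lem:a1}, and then Theorem~\ref{Hamilton} reduces everything to ruling out the hypercube alternative. Where you genuinely diverge is in that last exclusion step. The paper argues structurally: for $n=2$ it just notes $b_2=3$, and for $n\geqslant 3$ it exhibits the vertices $\hat{\delta}_{\{i\}}=\delta_{\{i\}}+\delta_N$ and $\hat{\delta}_{\{j\}}$ whose coordinatewise maximum $\delta_{\{i\}}+\delta_{\{j\}}+\delta_N$ fails to be a vertex (its collection $\{\{i\},\{j\}\}$ has empty intersection), which is incompatible with the vertex set of a 0-1 hypercube. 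You instead use a counting argument: by induction on the recursion of Theorem~\ref{thm3}, $f_n$ is even (since $2^{2^k-1}$ is even for $k\geqslant 1$ and $\sum_{k=1}^{n-1}\binom{n}{k}=2^n-2$ is even), so $b_n=f_n+1$ is odd and, being at least $3$ for $n\geqslant 2$, cannot equal $2^d$; hence the adjacency graph is not a hypercube graph. Both routes are sound. Yours is uniform in $n$, needs nothing beyond the vertex count, and yields the (pleasant, checkable against Table~\ref{Tabla_vert_bg}) byproduct that $b_n$ is always odd; its only cost is the dependence on the recursion of Theorem~\ref{thm3}. The paper's argument is independent of any counting formula and points to a concrete combinatorial obstruction inside the vertex set, at the price of implicitly invoking the closure of a hypercube's vertex set under coordinatewise maxima.
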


%

\begin{proof}
$\cBG_+(n)$ is combinatorial as a straight consequence of Theorem \ref{th:1} and Lemma \ref{lem:a1}. Now, fron Theorem \ref{Hamilton}, the adjacency graph of this polytope is either Hamilton-connected or a hypercube. For $n=1$, $\cBG_+(1)$ is reduced to a singleton, therefore the result holds
trivially.
For $n=2$, the number of vertices is 3, and therefore $\cBG_+(2)$ is
not a hypercube. For $n\geqslant 3$, observe that for any distinct $i,j\in N$,
$\hat{\delta}_{\{i\}}:=\delta_{\{i\}}+\delta_N$ and $\hat{\delta}_{\{j\}}$ are
vertices of $\cBG_+(n)$, however the game
$\delta_{\{i\}}+\delta_{\{j\}}+\delta_N$ is not a vertex of $\cBG_+(n)$.
\end{proof}

Let us now look more closely to adjacency. For further use, we illustrate on Figure~\ref{fig:na} the condition of
non-adjacency when $\cD_1\setminus\cD_2\neq\varnothing$ and
$\cD_2\setminus\cD_1\neq\varnothing$.

\begin{figure}[htb]
  \begin{center}
    \psset{unit=0.8cm}
    \pspicture(0,-1)(14.5,4)
    \pscircle(1.5,1.5){1.5}
    \pscircle(3,1.5){1.5}
    \psline[linestyle=dashed](0,1.5)(1.5,1.5)
    \psline[linestyle=dashed](3,1.5)(4.5,1.5)
    \uput[90](1.5,3){$\cD_1$}
    \uput[90](3,3){$\cD_2$}
    \uput[90](1,1.5){$\cB_1$}
    \uput[-90](1,1.5){$\cB_2$}
    \uput[90](3.5,1.5){$\cB_3$}
    \uput[-90](3.5,1.5){$\cB_4$}
    \rput(2.25,-1){(a)}

    \pscircle(6.5,1.5){1.5}
    \pscircle(8,1.5){1.5}
    \psline[linestyle=dashed](8,1.5)(9.5,1.5)
    \uput[90](6.5,3){$\cD_1$}
    \uput[90](8,3){$\cD_2$}
    \uput[90](8.5,1.5){$\cB_3$}
    \uput[-90](8.5,1.5){$\cB_4$}
    \rput(7.25,-1){(b)}

        \pscircle(11.5,1.5){1.5}
    \pscircle(13,1.5){1.5}
    \uput[90](11.5,3){$\cD_1$}
    \uput[90](13,3){$\cD_2$}
    \rput(12.25,1.5){$\cD_4$}
    \rput(12.25,-1){(c)}

     \endpspicture
  \end{center}
  \caption{Non-adjacency of $v_1,v_2$, with associated collections
    $\cD_1,\cD_2$. Case (a): $\cD_3=\cB_1\cup(\cD_1\cap \cD_2)\cup\cB_3$,
    $\cD_4=\cB_2\cup(\cD_1\cap \cD_2)\cup\cB_4$; Case (b):
    $\cD_3=\cD_1\cup\cB_3$, $\cD_4=(\cD_1\cap\cD_2)\cup\cB_4$ (similar when
    $\cD_1,\cD_2$ exchanged); Case (c): $\cD_3=\cD_1\cup\cD_2$, $\cD_4=\cD_1\cap\cD_2$.}
  \label{fig:na}
\end{figure}

We now focus on the vertices whose associated collections have intersection
reduced to a singleton.

\begin{theorem}\label{th:a1}
Consider two vertices $v_1,v_2$ of $\cBG_+(n)$, associated to $\cD_1,\cD_2$
respectively, and $\bigcap\cD_1=\{i\}=\bigcap\cD_2$. Then $v_1$ and $v_2$ are
adjacent iff either $\cD_1\subseteq \cD_2$ or the converse, and
$|\cD_1\Delta\cD_2|=1$.
\end{theorem}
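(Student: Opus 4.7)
The whole argument will be organized around Lemma~\ref{lem:a1}: two distinct vertices $v_1,v_2$ with collections $\cD_1,\cD_2$ fail to be adjacent if and only if there exist collections $\cD_3,\cD_4 \notin \{\cD_1,\cD_2\}$, each defining a vertex of $\cBG_+(n)$, such that
\[
\cD_3\cup\cD_4 = \cD_1\cup\cD_2, \qquad \cD_3\cap\cD_4 = \cD_1\cap\cD_2.
\]
The assumption $\bigcap\cD_1 = \bigcap\cD_2 = \{i\}$ will be used through the observation that $i$ belongs to every set appearing in $\cD_1\cup\cD_2$, which makes the verification of the vertex condition (Theorem~\ref{th:2}) for any sub-collection of $\cD_1\cup\cD_2$ automatic as soon as the sub-collection is nonempty.

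For the implication $(\Leftarrow)$, suppose (without loss of generality) $\cD_1 \subsetneq \cD_2$ with $\cD_2 = \cD_1 \cup \{S\}$. Any pair $(\cD_3,\cD_4)$ satisfying the identities above must have $\cD_3\cap\cD_4 = \cD_1$, forcing every set of $\cD_1$ into both $\cD_3$ and $\cD_4$, while $S$ belongs to $\cD_3\cup\cD_4\setminus(\cD_3\cap\cD_4)$ and hence to exactly one of them. Thus $\{\cD_3,\cD_4\} = \{\cD_1,\cD_1\cup\{S\}\} = \{\cD_1,\cD_2\}$, so no admissible decomposition exists and Lemma~\ref{lem:a1} yields adjacency.

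For $(\Rightarrow)$ I would argue by contrapositive, splitting into two cases. Case A: $\cD_1\setminus\cD_2 \neq\varnothing$ and $\cD_2\setminus\cD_1\neq\varnothing$. Set
\[
\cD_3 := \cD_1\cup\cD_2, \qquad \cD_4 := \cD_1\cap\cD_2.
\]
The set-theoretic identities are immediate, $\cD_3$ strictly contains both $\cD_1$ and $\cD_2$, $\cD_4$ is strictly contained in both, so both are distinct from $\cD_1,\cD_2$. As every $T\in\cD_1\cup\cD_2$ contains $i$, we have $i\in\bigcap\cD_3$; for $\cD_4$, either $\cD_4=\varnothing$ (in which case $v_4 = u_N$, a vertex) or $i\in\bigcap\cD_4$; Theorem~\ref{th:2} then guarantees that $\cD_3,\cD_4$ define vertices, and Lemma~\ref{lem:a1} forces non-adjacency.

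Case B: $\cD_1\subsetneq\cD_2$ with $|\cD_2\setminus\cD_1|\geqslant 2$ (the symmetric case is identical). Pick any $S\in\cD_2\setminus\cD_1$ and set $\cD_3 := \cD_1\cup\{S\}$, $\cD_4 := \cD_2\setminus\{S\}$. Then $\cD_3\cup\cD_4 = \cD_2 = \cD_1\cup\cD_2$ and $\cD_3\cap\cD_4 = \cD_1 = \cD_1\cap\cD_2$; the hypothesis $|\cD_2\setminus\cD_1|\geqslant 2$ makes $\cD_3,\cD_4$ both strictly between $\cD_1$ and $\cD_2$, so they differ from $\cD_1,\cD_2$. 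Every set in $\cD_3,\cD_4$ lies in $\cD_2$ and therefore contains $i$, hence $\bigcap\cD_3,\bigcap\cD_4\supseteq\{i\}$, so by Theorem~\ref{th:2} both define vertices and Lemma~\ref{lem:a1} again gives non-adjacency. The main obstacle is essentially only organisational: once Lemma~\ref{lem:a1} is in hand, the delicate point is choosing the decompositions $(\cD_3,\cD_4)$ so that they are genuine vertex collections and distinct from $(\cD_1,\cD_2)$; the common-veto-player $i$ takes care of the vertex condition and the case split takes care of distinctness.
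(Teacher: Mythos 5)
Your proposal is correct and follows essentially the same route as the paper: both arguments hinge on Lemma~\ref{lem:a1} together with Theorem~\ref{th:2} (the common element $i$ guaranteeing the vertex condition), exhibiting a decomposition $\cD_3,\cD_4$ with the same union and intersection to rule out adjacency, and excluding any such decomposition when $\cD_2=\cD_1\cup\{S\}$. The only difference is cosmetic: in the incomparable case you use the pair $\cD_1\cup\cD_2,\ \cD_1\cap\cD_2$ where the paper swaps a single set ($\cD_1\cup\{S_2\}$, $\cD_2\setminus\{S_2\}$), and you phrase the forward implication as a contrapositive rather than a direct contradiction.
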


\begin{proof}
Consider two vertices $v_1,v_2$ as above and
$v=\frac{1}{2}(v_1+v_2)$. By definition, $v(S)=1$ iff $S\in\cD_1\cap \cD_2$,
  $v(S)=1/2$ iff $S\in\cD_1\Delta\cD_2$, and $v(S)=0$ otherwise.

$\Rightarrow)$ Assume $v_1,v_2$ are adjacent and suppose
$\cD_1\setminus\cD_2\ni S_1$ and $\cD_2\setminus \cD_1\ni S_2$. Consider $v_3$
generated by $\cD_1\cup\{S_2\}$ and $v_4$ generated by
$\cD_2\setminus\{S_2\}$. Then $v_3,v_4$ differ from $v_1,v_2$ and
$v=\frac{1}{2}(v_3+v_4)$, contradicting that $v_1,v_2$ are
adjacent. Consequently, $\cD_1\subseteq \cD_2$ (or the converse).

Assuming the former, let us prove that $|\cD_2\setminus \cD_1|=1$. Suppose on
the contrary that there exist distinct $S_1,S_2\subseteq N$ s.t. $S_1,S_2\in
\cD_2\setminus\cD_1$. Hence, we can consider the games $v_3,v_4$ generated by
$\cD_1\cup\{S_1\}$ and $\cD_2\setminus\{S_1\}$ respectively. Thus defined, $v_3,v_4$
differ from $v_1,v_2$ and
$v=\frac{1}{2}(v_3+v_4)$, a contradiction.

$\Leftarrow)$ Suppose by contradiction that $v_1,v_2$ are not adjacent. Then by
Lemma~\ref{lem:a1} there exist vertices $v_3,v_4\in \cBG_+(n)$ different from
$v_1,v_2$ such that $v_1+v_2=v_3+v_4$, with associated collections
$\cD_3,\cD_4$ satisfying $\cD_1\cap\cD_2\subseteq\cD_3\subseteq \cD_1\cup\cD_2$
and similarly for $\cD_4$.

If $\cD_1\not\subseteq \cD_2$ and $\cD_2\not\subseteq \cD_1$ we are
done considering $\cD_3 = \cD_1\cap \cD_2, \cD_4= \cD_1 \cup \cD_2$. Otherwise, assume $\cD_1\subseteq \cD_2$. The above constraints resume to
$\cD_1\subseteq \cD_3\subseteq \cD_2$ and the same for $\cD_4$. As $v_3,v_4$
differ from $v_1,v_2$, strict inclusion must hold throughout, which implies
$|\cD_1\Delta\cD_2|>1$. The case $\cD_2\subseteq \cD_1$ is similar. Hence, the
result holds.
\end{proof}

\begin{lemma}\label{lem:a2}
Let $v_1,v_2$ be two vertices of $\cBG_+(n)$ with associated collections
$\cD_1,\cD_2$ such that $\bigcap\cD_1=\{i\}$, $\bigcap\cD_2=\{j\}$. If $i\neq
j$, then $\cD_1\setminus \cD_2\neq\varnothing$, $\cD_2\setminus
\cD_1\neq\varnothing$.
\end{lemma}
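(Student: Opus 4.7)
The plan is to argue by contradiction and exploit the mismatch between the two singleton intersections. Suppose, toward a contradiction, that $\cD_1\setminus\cD_2=\varnothing$, i.e.\ $\cD_1\subseteq\cD_2$. Then every $S\in\cD_1$ also lies in $\cD_2$, and since $\bigcap\cD_2=\{j\}$, each such $S$ contains $j$. Consequently $j\in S$ for every $S\in\cD_1$, which forces $j\in\bigcap\cD_1=\{i\}$, contradicting $i\neq j$. The symmetric case $\cD_2\subseteq\cD_1$ is handled identically by swapping the roles of $\cD_1$ and $\cD_2$ (and of $i$ and $j$). This shows both set differences must be nonempty.

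No nontrivial step is hidden here: the whole argument reduces to the observation that $\cD_1\subseteq\cD_2$ would propagate the common element $j$ of $\cD_2$ into the intersection of $\cD_1$. The only thing to be careful about is that we are told $\bigcap\cD_k$ is exactly the singleton $\{i\}$ (resp.\ $\{j\}$), not merely that it contains $i$; this is what makes the inclusion $j\in\bigcap\cD_1$ an immediate contradiction. There is no genuine obstacle in the proof, so I expect to keep it to a few lines.
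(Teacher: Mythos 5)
Your proof is correct and is essentially the paper's argument in contrapositive form: the paper directly exhibits a set $S\in\cD_1$ with $j\notin S$ (which then cannot lie in $\cD_2$ since every member of $\cD_2$ contains $j$), while you assume $\cD_1\subseteq\cD_2$ and propagate $j$ into $\bigcap\cD_1=\{i\}$ to get the same contradiction. No gap; the two routes are logically the same observation.
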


\begin{proof}
Suppose $i\neq j$. This implies that in $\cD_1$ there must exist $S$ such that $S\not\ni j$
(otherwise we would have $\bigcap\cD_1\supseteq \{i,j\}$). Hence $S\in \cD_1 \backslash \cD_2$. Similarly, there must
exist $T\in\cD_2$ such that $T\not\ni i$, which implies $T\in \cD_2\backslash \cD_1$.
\end{proof}

\begin{theorem}\label{th:a2}
Consider two vertices $v_1,v_2$ of $\cBG_+(n)$, associated to families
$\cD_1,\cD_2$ respectively, such that $\bigcap\cD_1=\{i\}\neq\bigcap\cD_2=\{j\}$
and suppose $\cD_1\cap\cD_2=\varnothing$. Then $v_1$ and $v_2$ are adjacent iff
there do not exist collections $\cD_3,\cD_4$ distinct from
$\cD_1,\cD_2$ satisfying

\begin{enumerate}
\item $\cD_3\cap\cD_4=\varnothing$,
\item $\cD_3\cup\cD_4=\cD_1\cup\cD_2$,
\item $\bigcap\cD_3\neq\varnothing$ and $\bigcap\cD_4\neq\varnothing$.
\end{enumerate}
\end{theorem}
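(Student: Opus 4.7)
The plan is to reduce the statement to a direct application of Lemma~\ref{lem:a1}, and then to translate its conclusion into conditions 1--3 using the special structure of the problem (singleton intersections and $\cD_1 \cap \cD_2 = \varnothing$).

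By Lemma~\ref{lem:a1}, $v_1$ and $v_2$ fail to be adjacent if and only if there exist vertices $v_3, v_4$ of $\cBG_+(n)$, both distinct from $v_1$ and $v_2$, with $v_1 + v_2 = v_3 + v_4$, and whose associated collections satisfy $\cD_3 \cup \cD_4 = \cD_1 \cup \cD_2$ and $\cD_3 \cap \cD_4 = \cD_1 \cap \cD_2$. Since we assume $\cD_1 \cap \cD_2 = \varnothing$, this last equality becomes $\cD_3 \cap \cD_4 = \varnothing$, which is exactly condition 1, while $\cD_3 \cup \cD_4 = \cD_1 \cup \cD_2$ is condition 2. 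Conversely, any pair $(\cD_3, \cD_4)$ of subcollections of $\cD_1 \cup \cD_2$ satisfying conditions 1 and 2 determines 0-1-valued games $v_3, v_4$ (with value $1$ on $N$) for which $v_1 + v_2 = v_3 + v_4$, so the only remaining task is to characterize when both $v_3$ and $v_4$ are actually vertices.

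By Theorem~\ref{th:2}, $\cD_k$ defines a vertex ($k = 3, 4$) iff either $\cD_k = \varnothing$ or $\bigcap \cD_k \neq \varnothing$. I plan to rule out the empty case in the present setting: if, say, $\cD_3 = \varnothing$, then $\cD_4 = \cD_1 \cup \cD_2$, and since $i \neq j$,
\[
\bigcap \cD_4 \;\subseteq\; \bigcap \cD_1 \,\cap\, \bigcap \cD_2 \;=\; \{i\} \cap \{j\} \;=\; \varnothing.
\]
Because $\cD_1, \cD_2 \neq \varnothing$ (each has a singleton as intersection), $\cD_4$ is nonempty with empty intersection, hence does not define a vertex by Theorem~\ref{th:2}; the same argument applies with the roles of $\cD_3, \cD_4$ swapped. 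Therefore both $\cD_3$ and $\cD_4$ must be nonempty with nonempty intersection, which is precisely condition 3 of the statement.

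Finally, the distinctness of $v_3, v_4$ from $v_1, v_2$ matches exactly the distinctness of $\cD_3, \cD_4$ from $\cD_1, \cD_2$, since vertices are in bijection with their associated collections (Theorems~\ref{th:1} and \ref{th:2}). Putting everything together yields the desired equivalence. I expect no serious obstacle: the proof is essentially a careful unpacking of Lemma~\ref{lem:a1}, and the only delicate point is the exclusion of the degenerate decomposition $\cD_3 = \varnothing$ (or $\cD_4 = \varnothing$), which crucially relies on the hypothesis $i \neq j$.
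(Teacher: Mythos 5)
Your proposal is correct and follows essentially the same route as the paper: both directions reduce to Lemma~\ref{lem:a1} together with Theorem~\ref{th:2} to pass between vertices and their collections. Your explicit exclusion of the degenerate decomposition $\cD_3=\varnothing$ (forcing $\cD_4=\cD_1\cup\cD_2$ with empty intersection) is a careful touch that the paper's proof only states implicitly via ``condition 3 is implied by the fact that $v_3,v_4$ are vertices.''
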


\begin{proof}
  $\Rightarrow)$ Suppose there exist
  $\cD_3,\cD_4$ satisfying the above conditions. Then, since $\bigcap\cD_3$ and
  $\bigcap\cD_4$ are nonempty by condition 3), they determine vertices $v_3,v_4$,
  respectively. By conditions 1) and 2), it follows that
  \[
\frac{1}{2}v_1+\frac{1}{2}v_2=\frac{1}{2}v_3+\frac{1}{2}v_4,
\]
showing that $v_1,v_2$ are not adjacent.

$\Leftarrow)$ Suppose $v_1,v_2$ are not adjacent. By Lemma~\ref{lem:a1}, it
follows that there exist vertices $v_3,v_4$ such that $v_1+v_2=v_3+v_4$,
determined by collections $\cD_3,\cD_4$ respectively, satisfying conditions 1)
and 2). Now, condition 3) is implied by the fact that $v_3,v_4$ are vertices.
\end{proof}

\begin{lemma}\label{lem:a3}
  Consider two vertices $v_1,v_2$ of $\cBG_+(n)$, associated to $\cD_1,\cD_2$
  respectively, such that $\bigcap\cD_1=\{i\}\neq\bigcap\cD_2=\{j\}$ and
  $\cD_1\cap\cD_2=\varnothing$. If $v_1,v_2$ are adjacent, then
  \[
\forall S\in \cD_1, j\not\in S, \quad \forall T\in \cD_2, i\not\in T.
  \]
\end{lemma}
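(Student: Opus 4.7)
The plan is to prove the contrapositive via Theorem~\ref{th:a2}. By the symmetry between the roles of $(i,\cD_1)$ and $(j,\cD_2)$ in the statement, it suffices to assume there exists $S\in\cD_1$ with $j\in S$ and to show that $v_1,v_2$ are not adjacent. To invoke Theorem~\ref{th:a2}, I will exhibit collections $\cD_3,\cD_4$ distinct from $\cD_1,\cD_2$ satisfying the three conditions listed there. The natural construction is to transfer $S$ from $\cD_1$ to $\cD_2$: set
\[
\cD_3 := \cD_1\setminus\{S\}, \qquad \cD_4 := \cD_2\cup\{S\}.
\]

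Conditions (1) and (2) of Theorem~\ref{th:a2} are immediate: disjointness $\cD_3\cap\cD_4=\varnothing$ follows from $\cD_1\cap\cD_2=\varnothing$ together with the fact that $S$ is removed from one side and added to the other, and $\cD_3\cup\cD_4=\cD_1\cup\cD_2$ holds by construction. For condition (3), note that $\bigcap\cD_3\supseteq\{i\}$ because every set remaining in $\cD_1$ still contains $i$ (we only removed one element), and $\bigcap\cD_4\supseteq\{j\}$ because $\bigcap\cD_2=\{j\}$ and $j\in S$ by the hypothesis we are working under.

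It remains to check that $\cD_3,\cD_4$ are genuinely distinct from both $\cD_1$ and $\cD_2$. By construction $\cD_3\neq\cD_1$ and $\cD_4\neq\cD_2$. For $\cD_3\neq\cD_2$, note that $\cD_3\subseteq\cD_1$ so $\cD_3\cap\cD_2=\varnothing$, hence equality would force $\cD_2=\varnothing$, contradicting $\bigcap\cD_2=\{j\}$; similarly $\cD_4=\cD_1$ would force $\cD_2\subseteq\cD_1$, again contradicting $\cD_1\cap\cD_2=\varnothing$ with $\cD_2\neq\varnothing$. Then Theorem~\ref{th:a2} yields that $v_1,v_2$ are not adjacent, completing the contrapositive.

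No serious obstacle is expected: the argument is essentially a direct verification of the conditions of Theorem~\ref{th:a2} using the obvious ``transfer'' of the offending set $S$ from one collection to the other. The only subtlety is confirming that the resulting collections are not simply a relabelling of $\{\cD_1,\cD_2\}$, which is handled by the disjointness assumption.
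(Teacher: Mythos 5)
Your proposal is correct and uses exactly the same argument as the paper: transfer the offending set $S$ (with $j\in S$) from $\cD_1$ to $\cD_2$, i.e.\ take $\cD_3=\cD_1\setminus\{S\}$, $\cD_4=\cD_2\cup\{S\}$, and verify the three conditions of Theorem~\ref{th:a2} to contradict adjacency. Your write-up merely makes explicit the distinctness checks that the paper leaves implicit.
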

\begin{proof}
It suffices to note that if there exists $S\in\cD_1$ such that $j\in S$, then we
can consider $\cD_3:=\cD_1\setminus \{S\}$ and $\cD_4:=\cD_2\cup\{S\}$, which
fulfill the three conditions of Theorem~\ref{th:a2}.
\end{proof}

\begin{theorem}\label{th:a3}
Consider two vertices $v_1,v_2$ of $\cBG_+(n)$, associated to $\cD_1,\cD_2$
respectively, and $\bigcap\cD_1=\{i\}\neq\bigcap\cD_2=\{j\}$. Suppose $\cD_1 \cap \cD_2 \ne \varnothing $ and denote
$\bigcap(\cD_1\cap\cD_2)=T\cup\{i,j\}$, with $T\subseteq N\setminus\{i,j\}$.
Then, $v_1$ and $v_2$ are
adjacent iff the following two conditions are satisfied:

\begin{enumerate}
\item For all $S\in
  \cD_1\setminus \cD_2$, $j\not\in S$, and for all $S\in \cD_2\setminus \cD_1$,
  $i\not\in S$
\item For every disjoint $K_1,K_2\subseteq T$ and disjoint $K_3,K_4\subseteq T$
  such that $K_1\cap K_3\neq\varnothing$ and $K_2\cap K_4\neq\varnothing$, there
  exists either
  $S\in\cD_1\setminus\cD_2$ s.t. $K_1\not\subseteq S$ and $K_2\not\subseteq S$, or
  $S'\in\cD_2\setminus\cD_1$ s.t. $K_3\not\subseteq S'$ and $K_4\not\subseteq S'$.
\end{enumerate}
\end{theorem}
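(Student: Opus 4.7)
The plan uses Lemma~\ref{lem:a1} to reformulate non-adjacency: $v_1,v_2$ fail to be adjacent iff there exist partitions $\cA=\cA_1\sqcup\cA_2$ of $\cA:=\cD_1\setminus\cD_2$ and $\cB=\cB_1\sqcup\cB_2$ of $\cB:=\cD_2\setminus\cD_1$, not equal to $(\cA,\varnothing)$ or $(\varnothing,\cB)$, for which both $\cD_3:=\cA_1\cup\cB_1\cup\cC$ and $\cD_4:=\cA_2\cup\cB_2\cup\cC$ are vertices, where $\cC:=\cD_1\cap\cD_2$. The whole argument rests on one setup identity: from $\bigcap\cD_1=(\bigcap\cA)\cap(\bigcap\cC)=\{i\}$, $\bigcap\cC=T\cup\{i,j\}$, and $i\notin T$, one obtains $\bigcap\cA\cap T=\varnothing$, and symmetrically $\bigcap\cB\cap T=\varnothing$.

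For necessity of Condition~1, I reuse the move from the proof of Theorem~\ref{th:a1}: if some $S\in\cA$ contained $j$, transferring $S$ to $\cD_2$'s side would give $\cD_3=(\cA\setminus\{S\})\cup\cC$ and $\cD_4=\cB\cup\{S\}\cup\cC$, both vertices (with $i$ and $j$ in their intersections respectively) and distinct from $\cD_1,\cD_2$. For necessity of Condition~2 (assuming~1), I start from a negating quadruple $(K_1,K_2,K_3,K_4)$ and form $\cA_1=\{S\in\cA:K_1\subseteq S\}$, $\cA_2=\cA\setminus\cA_1$ (so $K_2\subseteq S$ on $\cA_2$ by the cover property), and analogously $\cB_1,\cB_2$. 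The setup identity forces all four blocks to be nonempty: e.g.\ $\cA_2=\varnothing$ would yield $K_1\subseteq\bigcap\cA\cap T=\varnothing$, contradicting $K_1\cap K_3\neq\varnothing$. Using $K_1\subseteq\bigcap\cA_1$, $K_3\subseteq\bigcap\cB_1$, and $T\subseteq\bigcap\cC$, one gets $\varnothing\neq K_1\cap K_3\subseteq\bigcap\cD_3$, and symmetrically $\bigcap\cD_4\neq\varnothing$. Since all four blocks are nonempty and $\cA\cap\cD_2=\varnothing=\cB\cap\cD_1$, the collections $\cD_3,\cD_4$ are each distinct from $\cD_1$ and $\cD_2$, so $v_1,v_2$ are non-adjacent.

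For sufficiency I argue by contrapositive: non-adjacency together with Condition~1 should force Condition~2 to fail. Condition~1 ensures $j\notin\bigcap\cA_k$ and $i\notin\bigcap\cB_k$ whenever the corresponding block is nonempty, and the setup identity then rules out every ``degenerate'' witness shape: e.g.\ $\cA_1=\varnothing$ with $\cB_2\neq\varnothing$ would give $\bigcap\cD_4\subseteq(\bigcap\cA)\cap(\bigcap\cC)\cap\bigcap\cB_2=\{i\}\cap\bigcap\cB_2=\varnothing$, contradicting vertexness. Only witnesses with all four of $\cA_1,\cA_2,\cB_1,\cB_2$ nonempty remain. Setting $L_1=\bigcap\cA_1\cap T$, $L_2=\bigcap\cA_2\cap T$, $L_3=\bigcap\cB_1\cap T$, $L_4=\bigcap\cB_2\cap T$, one obtains $\bigcap\cD_3=L_1\cap L_3$ and $\bigcap\cD_4=L_2\cap L_4$, both nonempty. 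Moreover $L_1\cap L_2\subseteq\bigcap\cA\cap T=\varnothing$ and $L_3\cap L_4=\varnothing$, so any $x\in L_1\cap L_3$ and $y\in L_2\cap L_4$ are automatically distinct, and the quadruple $K_1=K_3=\{x\}$, $K_2=K_4=\{y\}$ violates Condition~2.

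The main obstacle is showing that under Condition~1 the only admissible non-adjacency witnesses are those with all four blocks $\cA_1,\cA_2,\cB_1,\cB_2$ nonempty, and that the associated singletons $x,y\in T$ can always be chosen distinct. Both reductions rest squarely on the identity $\bigcap\cA\cap T=\bigcap\cB\cap T=\varnothing$; without it, a much more intricate case analysis over partial partitions would be needed to match non-adjacency witnesses with the disjointness constraints imposed by Condition~2.
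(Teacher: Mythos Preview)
Your proof is correct and follows essentially the same route as the paper's: both reduce via Lemma~\ref{lem:a1} to partitioning $\cD_1\setminus\cD_2$ and $\cD_2\setminus\cD_1$ into blocks and analyzing $\bigcap\cD_3,\bigcap\cD_4$ in terms of intersections with $T$. Your execution is somewhat cleaner than the paper's: by isolating the identity $\bigcap(\cD_1\setminus\cD_2)\cap T=\varnothing$ (and its symmetric counterpart) at the outset, you handle all block-emptiness cases uniformly and avoid the paper's separate treatment of $|T|=0$, $|T|=1$, $|T|\geqslant 2$; your final extraction of singleton witnesses $K_1=K_3=\{x\}$, $K_2=K_4=\{y\}$ is also a slight sharpening over the paper's choice $K_i=\bigcap\cB_i\cap T$.
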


\begin{proof}
  $\Rightarrow)$ Let us consider two adjacent vertices $v_1,v_2$ as above, and
  let $v=\frac{1}{2}(v_1+v_2)$. By Lemma~\ref{lem:a2}, we know that
  $\cD_1\setminus\cD_2\neq\varnothing$ and $\cD_2\setminus
  \cD_1\neq\varnothing$. Suppose there exists $S_1\in\cD_1\setminus \cD_2$
  s.t. $j\in S_1$. Consider $v_3,v_4$ generated by $\cD_1\setminus\{S_1\}$ and
  $\cD_2\cup\{S_1\}$, respectively. Then $v=\frac{1}{2}(v_3+v_4)$, a
  contradiction. The argument is the same with the existence of
  $S_2\in\cD_2\setminus \cD_1$ s.t. $i\in S_2$. This proves the first condition.

  We prove the second condition. Our strategy is to show that the nonexistence
  of a partition of $\cD_1\cup\cD_2$ like in Fig.~\ref{fig:na} (which is
  equivalent to adjacency) implies the second condition. We first observe
  that the first condition implies that a partition of $\cD_1\cup\cD_2$ like in
  cases (b) and (c) of Fig.~\ref{fig:na} can never occur. Indeed, in these
  cases, by the first condition, it follows that $\bigcap\cD_3$ contains neither $i$ nor $j$, and since
  $\bigcap\cD_3\neq\varnothing,$ it must contain some other element, say $k$. But then,
  $k\in\bigcap\cD_1,$ a contradiction. Consequently, we only have to consider case
  (a). Let us call (a)-partition a possible partition like in Case (a) and show that it is not possible to build such a partition. We distinguish
  different cases in terms of $\cD_1\cap\cD_2$.

\begin{enumerate}
\item  Suppose $\bigcap(\cD_1\cap\cD_2)=\{i,j\}$. The first condition implies that
  $j\not\in\bigcap\cB_1$ (same for $\bigcap\cB_2$), and $i\not\in\bigcap\cB_3$
  (same for $\bigcap\cB_4$). It follows that $\bigcap\cD_3=\varnothing$. Hence no
  (a)-partition can exist.

\item Suppose $\bigcap(\cD_1\cap\cD_2)=\{i,j,k\}$. In this case, by the first condition, it follows that $k\in\bigcap\cB_1$ and $k\in\bigcap\cB_3$ to ensure
  $\bigcap\cD_3\neq\varnothing$, and the same holds for $\bigcap\cB_2$ and
  $\bigcap\cB_4$. But then $\bigcap\cD_1\ni k$, a contradiction. Hence no
  (a)-partition can exist.

\item Suppose $\bigcap(\cD_1\cap\cD_2)=T\cup\{i,j\}$ with $|T|\geqslant 2$. There
  exists an (a)-partition iff one can have $(\bigcap\cB_1\cap\bigcap\cB_3)\cap
  T\neq\varnothing$, $(\bigcap\cB_2\cap\bigcap\cB_4)\cap
  T\neq\varnothing$ (to ensure nonemptiness of $\bigcap\cD_3,\bigcap\cD_4$), and
  $\bigcap\cB_1\cap\bigcap\cB_2\cap T=\varnothing$,
  $\bigcap\cB_3\cap\bigcap\cB_4\cap T=\varnothing$ (to ensure
  $\bigcap\cD_1=\{i\}$ and $\bigcap\cD_2=\{j\}$). By letting $K_i:=\bigcap
  \cB_i\cap T$ for $i=1,\ldots,4$, this is equivalent to:
  $\exists K_1,K_2,K_3,K_4\subseteq T$, $K_1\cap K_2=\varnothing$, $K_3\cap
  K_4=\varnothing$, $K_1\cap K_3\neq\varnothing$, $K_2\cap K_4\neq\varnothing$
  such that for every $S\in \cD_1\setminus\cD_2$, either $K_1\subseteq S$ or
  $K_2\subseteq S$, and for $S'\in \cD_2\setminus\cD_1$, either $K_3\subseteq S'$ or
  $K_4\subseteq S'$. Therefore, there is no (a)-partition iff: $\forall
  K_1,K_2,K_3,K_4\subseteq T$ such that  $K_1\cap K_2=\varnothing$, $K_3\cap
  K_4=\varnothing$, $K_1\cap K_3\neq\varnothing$, $K_2\cap K_4\neq\varnothing$,
  either $\exists S\in\cD_1\setminus\cD_2$ s.t. $K_1\not\subseteq
  S,K_2\not\subseteq S$, or $\exists S'\in\cD_2\setminus\cD_1$ s.t. $K_3\not\subseteq
  S',K_4\not\subseteq S$.
\end{enumerate}

  \medskip

  $\Leftarrow)$ Suppose $v_1,v_2 $ are not adjacent. Then by Lemma~\ref{lem:a1},
  there exist $\cD_3,\cD_4$ such that either an (a)-partition, a
  (b)-partition or a (c)-partition of $\cD_1\cup\cD_2$ exists (see Figure~\ref{fig:na}).

  Suppose there exists an (a)-partition. If the first condition is not satisfied,
  we are done. Hence, assume that the first condition holds. Then, proceeding as in cases
  1, 2, 3 in the $\Rightarrow)$ part, we deduce that if $|T|<2$, no
  (a)-partition can exist, and if $|T|\geqslant 2$, the existence of a
  (a)-partition implies the negation of the second condition, as desired.

  Suppose there exists a (b)-partition. Then $\cD_3:=\cD_1\cup\cB_3$ satisfies
  $\bigcap\cD_3\neq\varnothing$, say it contains $k$. Then, $k\in\bigcap\cD_1$,
  which implies $k=i$. This in turn implies $i\in\bigcap\cB_3$, which violates the first
  condition.

  Suppose there exists a (c)-partition.  Observe this cannot occur. Indeed,
  $\bigcap \cD_3 = \bigcap(\cD_1\cup\cD_2)\neq\varnothing$, say $k\in
  \bigcap(\cD_1\cup\cD_2)$. In particular, $k\in\bigcap\cD_1$, which implies
  $k=i$. But then $i\in\bigcap\cD_2$, a contradiction.
\end{proof}

Observe that if $T=\varnothing$ in the above theorem, then only the first condition
remains.

\begin{figure}[hp]
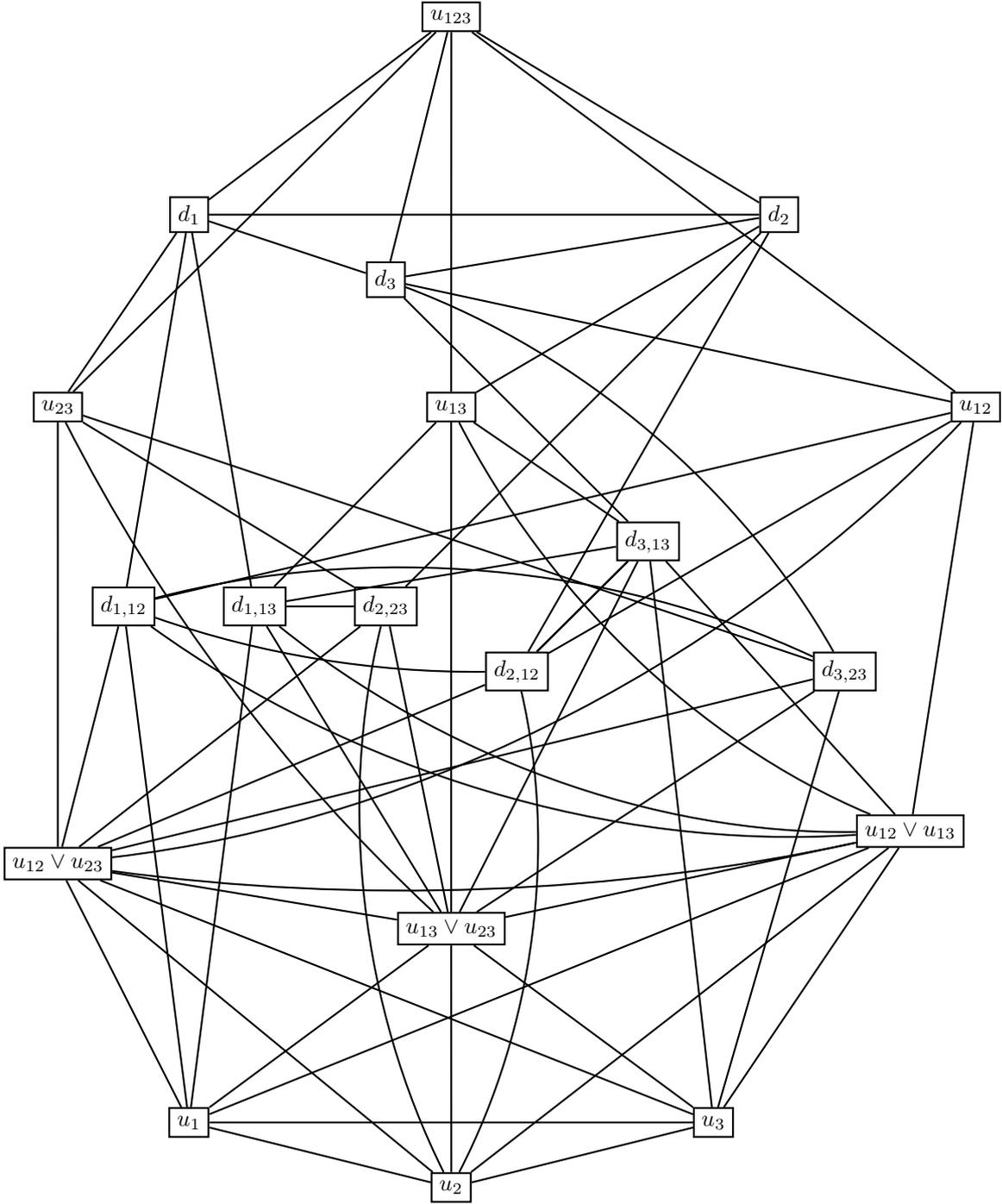

  \begin{center}
   \psset{unit=1cm}
    \pspicture(-0.5,-0.5)(14.5,19.5)
\rput[t](2,1){\rnode{1}{\psframebox{\small $u_1$}}}
\rput[t](6,0){\rnode{2}{\psframebox{\small $u_2$}}}
\rput[t](10,1){\rnode{3}{\psframebox{\small $u_3$}}}
\rput[t](13,5.5){\rnode{12,13}{\psframebox{\small $u_{12}\vee u_{13}$}}}
\rput[t](0,5){\rnode{12,23}{\psframebox{\small $u_{12}\vee u_{23}$}}}
\rput[t](6,4){\rnode{13,23}{\psframebox{\small $u_{13}\vee u_{23}$}}}
\rput[t](1,9){\rnode{1,12}{\psframebox{\small $d_{1,12}$}}}
\rput[t](7,8){\rnode{2,12}{\psframebox{\small $d_{2,12}$}}}
\rput[t](3,9){\rnode{1,13}{\psframebox{\small $d_{1,13}$}}}
\rput[t](9,10){\rnode{3,13}{\psframebox{\small $d_{3,13}$}}}
\rput[t](5,9){\rnode{2,23}{\psframebox{\small $d_{2,23}$}}}
\rput[t](12,8){\rnode{3,23}{\psframebox{\small $d_{3,23}$}}}
\rput[t](14,12){\rnode{12}{\psframebox{\small $u_{12}$}}}
\rput[t](6,12){\rnode{13}{\psframebox{\small $u_{13}$}}}
\rput[t](0,12){\rnode{23}{\psframebox{\small $u_{23}$}}}
\rput[t](2,15){\rnode{d1}{\psframebox{\small $d_{1}$}}}
\rput[t](5,14){\rnode{d3}{\psframebox{\small $d_{3}$}}}
\rput[t](11,15){\rnode{d2}{\psframebox{\small $d_{2}$}}}
\rput[t](6,18){\rnode{123}{\psframebox{\small $u_{123}$}}}

\ncline{-}{1}{12,13}
\ncline{-}{1}{12,23}
\ncline{-}{1}{13,23}
\ncline{-}{1}{2}
\ncline{-}{1}{3}
\ncline{-}{1}{1,12}
\ncline{-}{1}{1,13}
\ncarc[arcangle=-20]{-}{2}{2,12}
\ncline{-}{2}{12,23}
\ncline{-}{2}{12,13}
\ncline{-}{2}{13,23}
\ncline{-}{2}{3}
\ncarc[arcangle=20]{-}{2}{2,23}
\ncline{-}{3}{3,13}
\ncline{-}{3}{13,23}
\ncline{-}{3}{12,23}
\ncline{-}{3}{12,13}
\ncline{-}{3}{3,23}
\ncline{-}{12,13}{12}
\ncarc[arcangle=20]{-}{12,13}{13}
\ncline{-}{12,13}{13,23}
\ncarc[arcangle=10]{-}{12,13}{12,23}
\ncarc[arcangle=20]{-}{12,13}{1,13}
\ncline{-}{12,13}{3,13}
\ncarc[arcangle=20]{-}{12,13}{1,12}
\ncline{-}{12,13}{2.12}
\ncline{-}{13,23}{13}
\ncarc[arcangle=10]{-}{13,23}{23}
\ncline{-}{13,23}{12,23}
\ncline{-}{13,23}{2,23}
\ncline{-}{13,23}{3,23}
\ncline{-}{13,23}{1,13}
\ncline{-}{13,23}{3,13}
\ncarc[arcangle=-20]{-}{12,23}{12}
\ncline{-}{12,23}{23}
\ncline{-}{12,23}{2,23}
\ncline{-}{12,23}{3,23}
\ncline{-}{12,23}{1,12}
\ncline{-}{12,23}{2,12}
\ncline{-}{1,12}{12}
\ncline{-}{1,12}{d1}
\ncarc[arcangle=-10]{-}{1,12}{2,12}
\ncarc[arcangle=20]{-}{1,12}{3,23}
\ncline{-}{2,12}{12}
\ncline{-}{2,12}{d2}
\ncline{-}{2,12}{3,13}
\ncline{-}{1,13}{13}
\ncline{-}{1,13}{d1}
\ncline{-}{1,13}{3,13}
\ncline{-}{1,13}{2,23}
\ncline{-}{3,13}{13}
\ncline{-}{3,13}{d3}
\ncline{-}{3,13}{2,12}
\ncline{-}{2,23}{23}
\ncline{-}{2,23}{d2}
\ncline{-}{3,23}{23}
\ncarc[arcangle=-20]{-}{3,23}{d3}
\ncline{-}{12}{123}
\ncline{-}{12}{d3}
\ncline{-}{d1}{123}
\ncline{-}{d1}{d2}
\ncline{-}{d1}{d3}
\ncline{-}{13}{123}
\ncline{-}{13}{d2}
\ncline{-}{d2}{123}
\ncline{-}{d2}{d3}
\ncline{-}{23}{123}
\ncline{-}{23}{d1}
\ncline{-}{d3}{123}
\endpspicture
  \end{center}
  \caption{Adjacency graph of $\cBG_+(n)$ for $n=3.$ Vertices are defined in Table~\ref{vert_n=3}}
  \label{AdjacencyGraph}
\end{figure}

In Figure \ref{AdjacencyGraph} the adjacency graph of vertices of $\cBG_+(n)$ can be seen. This graph has the vertices as nodes and there is an
edge between two nodes if they are adjacent. Finally, to illustrate Theorem \ref{path}, in Figure \ref{hamilton}, a Hamilton path joining $u_{123}$ and $d_{2,23}$ is given.
\begin{figure}[hp]
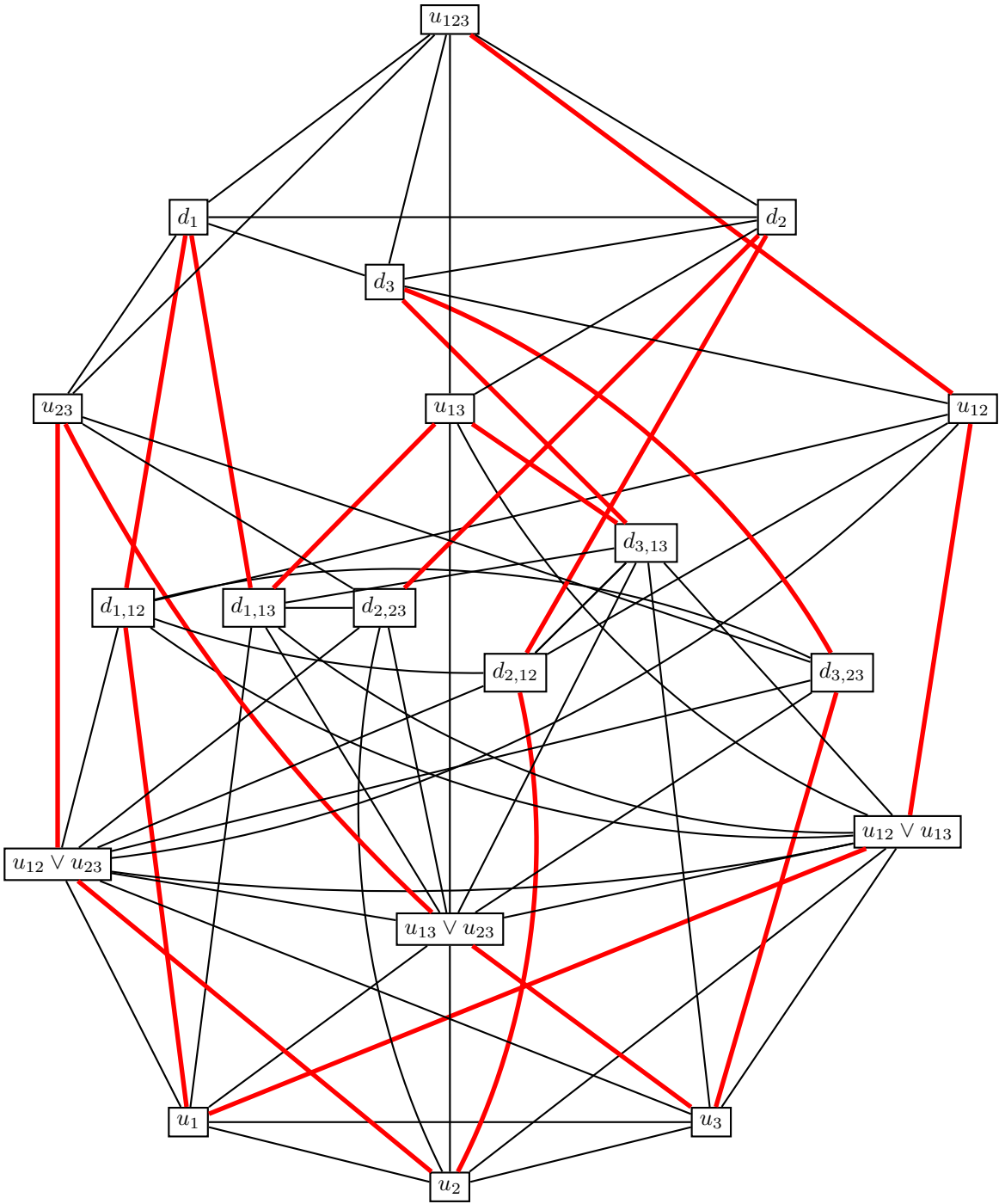

  \begin{center}
     \psset{unit=1cm}
    \pspicture(-0.5,-0.5)(14.5,19.5)
\rput[t](2,1){\rnode{1}{\psframebox{\small $u_1$}}}
\rput[t](6,0){\rnode{2}{\psframebox{\small $u_2$}}}
\rput[t](10,1){\rnode{3}{\psframebox{\small $u_3$}}}
\rput[t](13,5.5){\rnode{12,13}{\psframebox{\small $u_{12}\vee u_{13}$}}}
\rput[t](0,5){\rnode{12,23}{\psframebox{\small $u_{12}\vee u_{23}$}}}
\rput[t](6,4){\rnode{13,23}{\psframebox{\small $u_{13}\vee u_{23}$}}}
\rput[t](1,9){\rnode{1,12}{\psframebox{\small $d_{1,12}$}}}
\rput[t](7,8){\rnode{2,12}{\psframebox{\small $d_{2,12}$}}}
\rput[t](3,9){\rnode{1,13}{\psframebox{\small $d_{1,13}$}}}
\rput[t](9,10){\rnode{3,13}{\psframebox{\small $d_{3,13}$}}}
\rput[t](5,9){\rnode{2,23}{\psframebox{\small $d_{2,23}$}}}
\rput[t](12,8){\rnode{3,23}{\psframebox{\small $d_{3,23}$}}}
\rput[t](14,12){\rnode{12}{\psframebox{\small $u_{12}$}}}
\rput[t](6,12){\rnode{13}{\psframebox{\small $u_{13}$}}}
\rput[t](0,12){\rnode{23}{\psframebox{\small $u_{23}$}}}
\rput[t](2,15){\rnode{d1}{\psframebox{\small $d_{1}$}}}
\rput[t](5,14){\rnode{d3}{\psframebox{\small $d_{3}$}}}
\rput[t](11,15){\rnode{d2}{\psframebox{\small $d_{2}$}}}
\rput[t](6,18){\rnode{123}{\psframebox{\small $u_{123}$}}}

\ncline[linecolor=red,linewidth=2pt]{-}{1}{12,13}
\ncline{-}{1}{12,23}
\ncline{-}{1}{13,23}
\ncline{-}{1}{2}
\ncline{-}{1}{3}
\ncline[linecolor=red,linewidth=2pt]{-}{1}{1,12}
\ncline{-}{1}{1,13}
\ncarc[arcangle=-20,linecolor=red,linewidth=2pt]{-}{2}{2,12}
\ncline[linecolor=red,linewidth=2pt]{-}{2}{12,23}
\ncline{-}{2}{12,13}
\ncline{-}{2}{13,23}
\ncline{-}{2}{3}
\ncarc[arcangle=20]{-}{2}{2,23}
\ncline{-}{3}{3,13}
\ncline[linecolor=red,linewidth=2pt]{-}{3}{13,23}
\ncline{-}{3}{12,23}
\ncline{-}{3}{12,13}
\ncline[linecolor=red,linewidth=2pt]{-}{3}{3,23}
\ncline[linecolor=red,linewidth=2pt]{-}{12,13}{12}
\ncarc[arcangle=20]{-}{12,13}{13}
\ncline{-}{12,13}{13,23}
\ncarc[arcangle=10]{-}{12,13}{12,23}
\ncarc[arcangle=20]{-}{12,13}{1,13}
\ncline{-}{12,13}{3,13}
\ncarc[arcangle=20]{-}{12,13}{1,12}
\ncline{-}{12,13}{2.12}
\ncline{-}{13,23}{13}
\ncarc[arcangle=10,linecolor=red,linewidth=2pt]{-}{13,23}{23}
\ncline{-}{13,23}{12,23}
\ncline{-}{13,23}{2,23}
\ncline{-}{13,23}{3,23}
\ncline{-}{13,23}{1,13}
\ncline{-}{13,23}{3,13}
\ncarc[arcangle=-20]{-}{12,23}{12}
\ncline[linecolor=red,linewidth=2pt]{-}{12,23}{23}
\ncline{-}{12,23}{2,23}
\ncline{-}{12,23}{3,23}
\ncline{-}{12,23}{1,12}
\ncline{-}{12,23}{2,12}
\ncline{-}{1,12}{12}
\ncline[linecolor=red,linewidth=2pt]{-}{1,12}{d1}
\ncarc[arcangle=-10]{-}{1,12}{2,12}
\ncarc[arcangle=20]{-}{1,12}{3,23}
\ncline{-}{2,12}{12}
\ncline[linecolor=red,linewidth=2pt]{-}{2,12}{d2}
\ncline{-}{2,12}{3,13}
\ncline[linecolor=red,linewidth=2pt]{-}{1,13}{13}
\ncline[linecolor=red,linewidth=2pt]{-}{1,13}{d1}
\ncline{-}{1,13}{3,13}
\ncline{-}{1,13}{2,23}
\ncline[linecolor=red,linewidth=2pt]{-}{3,13}{13}
\ncline[linecolor=red,linewidth=2pt]{-}{3,13}{d3}
\ncline{-}{3,13}{2,12}
\ncline{-}{2,23}{23}
\ncline[linecolor=red,linewidth=2pt]{-}{2,23}{d2}
\ncline{-}{3,23}{23}
\ncarc[arcangle=-20,linecolor=red,linewidth=2pt]{-}{3,23}{d3}
\ncline[linecolor=red,linewidth=2pt]{-}{12}{123}
\ncline{-}{12}{d3}
\ncline{-}{d1}{123}
\ncline{-}{d1}{d2}
\ncline{-}{d1}{d3}
\ncline{-}{13}{123}
\ncline{-}{13}{d2}
\ncline{-}{d2}{123}
\ncline{-}{d2}{d3}
\ncline{-}{23}{123}
\ncline{-}{23}{d1}
\ncline{-}{d3}{123}
\endpspicture
  \end{center}
  \caption{Adjacency graph of $\cBG_+(n)$ for $n=3.$ The Hamiltonian path
    joining $u_{123}$ and $d_{2,23}$ is indicated in thick red line.}
  \label{hamilton}
\end{figure}

\subsection{Facets}
 The following result gives the facets of $\cBG_+(n).$

\begin{theorem}\label{th:facet}
The following holds:
\begin{enumerate}
\item Each equality $v(S)=0$, $S\neq\varnothing,N$, defines  a facet.
\item  Each equality $\sum_{S\in\cB}\lambda^\cB_Sv(S)=1$, $\cB\in\gB^*(n)$, defines
  a facet.
\item  The number of facets is $2^n + b(n)-3,$ with $b(n)$ the number of minimal
  balanced collections on $N$.
\end{enumerate}
\end{theorem}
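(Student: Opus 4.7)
The plan is to verify parts (1) and (2) via the standard facet criterion: since $\cBG_+(n)$ is full-dimensional in $\RR^{2^N\setminus\{\varnothing,N\}}$ by Proposition~\ref{pr:1}, a defining inequality is facet-defining if and only if the corresponding face admits a relative interior point, i.e., a game satisfying that inequality with equality and every other defining inequality strictly. Part~(3) is then a direct enumeration.

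For part (1), fix $S_0 \in 2^N\setminus\{\varnothing,N\}$ and consider
\[
v := u_N + \varepsilon \sum_{T\in 2^N\setminus\{\varnothing, N, S_0\}} \delta_T
\]
for small $\varepsilon>0$. Then $v(S_0)=0$, $v(S)=\varepsilon>0$ for the other nontrivial $S$, and $v(N)=1$. For every $\cB\in\gB^*(n)$,
\[
\sum_{S\in\cB}\lambda^\cB_S v(S) \;\leqslant\; \varepsilon\sum_{S\in\cB}\lambda^\cB_S \;<\; 1
\]
once $\varepsilon$ is chosen small enough, so every other facet inequality is strict at $v$. This establishes part (1).

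For part (2), fix $\cB_0\in\gB^*(n)$ with weights $\lambda^{\cB_0}_S$, and define $v_0$ by $v_0(N)=1$, $v_0(S)=|S|/n$ if $S\in\cB_0$, and $v_0(S)=0$ otherwise. Summing the balancedness identity $\sum_{S\in\cB_0,S\ni i}\lambda^{\cB_0}_S=1$ over $i\in N$ yields $\sum_{S\in\cB_0}\lambda^{\cB_0}_S|S|=n$, hence $\sum_{S\in\cB_0}\lambda^{\cB_0}_S v_0(S)=1$, placing $v_0$ on the candidate facet. Now set
\[
v_\varepsilon := v_0 + \varepsilon \sum_{T\in 2^N\setminus(\cB_0\cup\{\varnothing,N\})} \delta_T ,
\]
which still saturates the face equation and has $v_\varepsilon(S)>0$ for every $S\neq\varnothing,N$. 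The delicate step---and the main obstacle---is checking that for every other $\cB\in\gB^*(n)$, $\cB\neq\cB_0$, the balancedness inequality is strict at $v_\varepsilon$. Using $\sum_{S\in\cB}\lambda^\cB_S|S|=n$ (by the same double-counting applied to $\cB$) and a short rearrangement,
\[
\sum_{S\in\cB}\lambda^\cB_S v_\varepsilon(S) \;=\; 1 \;+\; \sum_{S\in\cB\setminus\cB_0}\lambda^\cB_S\Big(\varepsilon - \tfrac{|S|}{n}\Big).
\]
Here minimality is crucial: no other minimal balanced collection satisfies $\cB\subseteq\cB_0$, so $\cB\setminus\cB_0\neq\varnothing$, and choosing $\varepsilon<1/n$ forces the right-hand side to be strictly below $1$. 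Hence $v_\varepsilon\in\cBG_+(n)$ lies in the relative interior of the face, which is therefore a facet.

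Part (3) is a counting statement: type (1) yields $2^n-2$ facets (one per $S\in 2^N\setminus\{\varnothing,N\}$), and type (2) yields $|\gB^*(n)|=b(n)-1$ facets, since $\gB^*(n)$ omits only the collection $\{N\}$. Summing gives $2^n+b(n)-3$, as claimed. The heart of the argument is the strict-inequality bookkeeping in part (2), for which the minimality of $\cB_0$ is essential.
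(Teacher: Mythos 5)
Your proof is correct, but it follows a different route from the paper's. You establish facetness via the irredundancy criterion for a full-dimensional polytope: for each candidate inequality you construct a single interior-type witness --- in part (1) the game $u_N+\varepsilon\sum_{T\neq\varnothing,N,S_0}\delta_T$, in part (2) the perturbed game $v_\varepsilon$ built from $v_0(S)=|S|/n$ on $\cB_0$ --- lying on the corresponding hyperplane while every other defining inequality is strict. The two ingredients you rely on, the identity $\sum_{S\in\cB}\lambda^\cB_S|S|=n$ (double counting the balancedness conditions) and the fact that distinct minimal balanced collections cannot be nested (so $\cB\setminus\cB_0\neq\varnothing$, making each term $\lambda^\cB_S(\varepsilon-|S|/n)$ strictly negative for $\varepsilon<1/n$), are both sound, and your counting in part (3) matches the paper's, since distinct inequalities here correspond to distinct hyperplanes. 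The paper instead proves dimension directly: for each face it exhibits $2^n-2$ affinely independent \emph{vertices} of $\cBG_+(n)$ lying on it (the games $\hat\delta_T=\delta_T+\delta_N$ together with $u_N$ for the faces $v(S)=0$, and the games $v^i$ built from $\cB$ plus successively augmented collections for the balancedness faces), which shows the face has dimension $2^n-3$. Your argument is shorter and avoids the combinatorial bookkeeping of affine independence, at the cost of using non-vertex witnesses and a small-$\varepsilon$ perturbation; the paper's argument is heavier but yields explicit vertices incident to each facet, information that is reused elsewhere (e.g., in the discussion of point cores and adjacency). Note also that you only need the sufficiency direction of your facet criterion (a point on the hyperplane with all other inequalities strict implies a facet), which is the direction your constructions actually use.
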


\begin{proof}
Recall from (\ref{eq:bg+}) that
$$ \cBG_{+}(n):=\Big\lbrace v \in \cG(n): \sum_{S\in \cB} \lambda_S^{\cB} v(S)
\leqslant 1 , \forall \cB \in \gB^*(n), v(S)\geqslant 0,\forall \varnothing\neq S\subset N, v(N)=1\Big\rbrace .$$
All the facets of a polytope can be obtained by converting into equality one of
the inequalities that define it. Note however that some inequalities might
define a lower dimensional face.

1. We claim that every equality $v(S)=0, S\ne N, \varnothing $, leads by
intersection with $\cBG_+(n)$ to a face of dimension $2^n-3$, hence a facet. To see this,
consider the games $\hat{\delta}_T=\delta_T+\delta_N, T\ne S, N, \varnothing $,
i.e.,
\[
 \hat{\delta}_T(S')=\begin{cases}
 1, & \text{ if } S'=T \text{ or } S'=N\\
 0, & \text{ otherwise}.
 \end{cases}
 \]
Clearly, the games $\hat{\delta}_T$ for every $T\neq\varnothing, S,N$ are
vertices of
$\cBG_{+}(n)$ and satisfy $v(S)=0$. In addition, the game $u_N$ also belongs to
the face $v(S)=0$. These $2^n-2$ games are affinely independent because the $2^n-3$
games $\hat{\delta}_T-u_N=\delta_T$ are linearly independent, which proves the claim.

2. Consider any m.b.c. $\cB\in\gB^*(n)$ and the corresponding inequality. For any
$i\in N$ define the game $v^i$ by
\[
v^i(S)=\begin{cases}
1, & \text{ if } S\ni i \text{ and } S\in\cB, \text{ or } S=N\\
0, & \text{otherwise}.
\end{cases}
\]
Observe that each $v^i$ is a vertex of $\cBG_+(n)$, and $v^i$
satisfies $\sum_{S\in\cB}\lambda^\cB_Sv^{ i}(S)=1$. Moreover, the number of distinct
$v^i$ is $|\cB|=:p$. Let us call $\cD^1,\ldots,\cD^n$ the corresponding
collections defining $v^1,\ldots,v^n$. Define successively the collections
\begin{itemize}
\item $\cD^1\cup \{S\}$, with $S\ni 1$, $S\not\in \cB$
\item $\cD^2\cup\{S\}$, with $S\ni 2$, $S\not\ni 1$, $S\not\in \cB$
\item $\cD^3\cup\{S\}$, with $S\ni 3$, $S\not\ni 2$, $S\not\ni 1$, $S\not\in\cB$
\item etc.
\item $\cD^n\cup\{\{n\}\}$ (if $\{n\}$ not already present in $\cD^n$)
\end{itemize}
Observe that $\cD^1,\ldots,\cD^n$ plus all the above ones yields $2^n-2$
distinct collections, where each $S\in2^N\setminus\{\varnothing,N\}$ is present
at least once. Moreover, each collection defines a game which is a vertex
belonging to the face defined by $\sum_{S\in\cB}\lambda^\cB_Sv(S)=1$. It follows
that these $2^n-2$ games are affinely independent (as, e.g., the family of the
$2^n-3$ games $v-v^1$ with $v$ any game as defined above forms a linearly
independent family), and therefore the inequality
$\sum_{S\in\cB}\lambda^\cB_Sv(S)\leqslant 1$ defines a facet.

3. This is an immediate consequence of 1) and 2).
\end{proof}

\section{When is the core reduced to a singleton?}\label{sec:pc}
We address in this section the question of which balanced game in any of the
three sets of balanced games under consideration in this paper has a core
reduced to a singleton. Let us call this for simplicity a  {\it point core}.

A first observation is the following: Since $\cBG_\alpha(n)$ and $\cBG_+(n)$ are
subsets of $\cBG(n)$, it suffices to find all games in $\cBG(n)$
which have a point core and then to check if they belong to $\cG_+(n)$ or
satisfy $v(N)=\alpha$. Therefore we put our effort into finding all balanced
games in $\cBG(n)$ having a point core.

\subsection{The case of $\cBG(n)$}

For a given balanced game $v$ in $\cBG(n)$, let us denote by $\cE(v)$ the collection
\[
\cE(v) = \{S\subseteq N\mymid x(S)=v(S), \forall x\in C(v)\}.
\]
The set $\cE(v)$ is called the set of {\it effective} coalitions for $v.$
Obviously, $N\in\cE(v)$, and $\cE(v)=\{N\}$ if and only if the core is
full-dimensional, i.e., $(n-1)$-dimensional.
The following result of Laplace Mermoud et al. \cite{lagrsu23}
is central in our investigation.
\begin{lemma}\label{lem:lmgs}
  $\cE(v)$ is the union of all minimal balanced collection $\cB$ such that
  \[
\sum_{S\in\cB}\lambda_S^\cB v(S) = v(N).
  \]
\end{lemma}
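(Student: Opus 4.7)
The plan is to prove the two inclusions separately. The easy direction ($\supseteq$) is direct: given a minimal balanced collection $\cB\ni S$ with $\sum_{T\in\cB}\lambda^\cB_Tv(T)=v(N)$, sum up the core inequalities $x(T)\geqslant v(T)$ with weights $\lambda^\cB_T$. The left-hand side rearranges to $\sum_{i\in N}x_i\bigl(\sum_{T\in\cB,T\ni i}\lambda^\cB_T\bigr)=x(N)=v(N)$, while the right-hand side equals $v(N)$ by hypothesis, forcing all of the weighted inequalities to be tight. In particular $x(S)=v(S)$ for every $x\in C(v)$, so $S\in\cE(v)$.

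The harder direction ($\subseteq$) I would attack with LP duality. Take $S\in\cE(v)$, and assume $S\ne N$ (the case $S=N$ is witnessed by the trivial m.b.c.\ $\{N\}$). The condition $x(S)=v(S)$ for every $x\in C(v)$ says that the LP $\max x(S)$ over the core has optimum value $v(S)$. Strong duality then produces multipliers $\lambda_T\geqslant 0$ for proper nonempty $T\subset N$ and $\mu\in\RR$ satisfying $\sum_{T\ni i}\lambda_T=\mu-[i\in S]$ for every $i\in N$ and $\mu v(N)-\sum_T\lambda_T v(T)=v(S)$, with $\mu\geqslant 1$ so as to keep the $\lambda_T$ nonnegative.

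Next, I would ``absorb'' $S$ into the collection by setting $\lambda'_S=\lambda_S+1$ and $\lambda'_T=\lambda_T$ otherwise, which makes $\sum_{T\ni i}\lambda'_T=\mu$ hold uniformly across $i\in N$. Hence $\cB':=\{T\subset N\mymid \lambda'_T>0\}$, which contains $S$, is a balanced collection of proper subsets with balancing weights $\lambda'_T/\mu$, and the duality identity rewrites as $\sum_{T\in\cB'}(\lambda'_T/\mu)v(T)=v(N)$. The final step invokes the standard structural fact that every balanced weight vector on $N$ is a convex combination of minimal balanced weight vectors. Decomposing $(\lambda'_T/\mu)$ in this way and using balancedness of $v$ (each term is $\leqslant v(N)$ and the average is exactly $v(N)$) forces every m.b.c.\ appearing with positive coefficient to be tight at $v$; since $\lambda'_S>0$, at least one such m.b.c.\ contains $S$, providing the desired collection.

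The main obstacle I anticipate is the appeal to the decomposition of balanced weight vectors into m.b.c.\ weight vectors. If that result is available as a black box from the literature on balanced collections, then the argument reduces to the routine LP-duality manipulations sketched above; if not, it would have to be established separately, e.g.\ by induction on $|\cB'|$, extracting at each step a proper balanced subcollection and subtracting off a suitable multiple of its weight vector. The other delicate point is ensuring that $N$ itself cannot slip into the decomposition, which follows from the easy observation that the only m.b.c.\ containing $N$ is $\{N\}$ itself, together with the fact that $\lambda'_N=0$ in the construction above.
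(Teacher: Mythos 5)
Your argument is correct, but note that the paper does not prove this lemma at all: it is imported as a known result from Laplace Mermoud et al.\ \cite{lagrsu23}, so there is no internal proof to compare against. Your route is the natural self-contained one. The $\supseteq$ direction (summing the core inequalities with the balancing weights and observing that the weighted sum collapses to $x(N)=v(N)$, forcing tightness of each inequality since all weights are strictly positive) is exactly the standard argument. For $\subseteq$, your LP-duality derivation is sound: since $x(S)\geqslant v(S)$ is a core constraint and $C(v)\neq\varnothing$, membership of $S$ in $\cE(v)$ is equivalent to $\max_{x\in C(v)}x(S)=v(S)$; the dual conditions you write down are the correct ones, the absorption $\lambda'_S=\lambda_S+1$ does make $\sum_{T\ni i}\lambda'_T=\mu\geqslant 1$ uniform in $i$, and the complementary identity becomes $\sum_{T}(\lambda'_T/\mu)v(T)=v(N)$ with $S$ in the support. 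The final ingredient you invoke --- that the polytope $\{\lambda\geqslant 0\mymid \sum_{T\ni i}\lambda_T=1,\ \forall i\in N\}$ has as vertices precisely the weight vectors of minimal balanced collections --- is indeed a classical fact going back to Shapley \cite{sha67}, so treating it as a black box is legitimate; your averaging argument (each tight-or-slack term is $\leqslant v(N)$ by balancedness of $v$, and the convex combination equals $v(N)$) correctly forces tightness of every m.b.c.\ carrying positive coefficient, and positivity of $\lambda'_S$ guarantees one of them contains $S$. Your two flagged delicate points are handled adequately: $\{N\}$ is the unique minimal balanced collection containing $N$, and it cannot enter the decomposition because $\lambda'_N=0$ while all coefficients and extreme-point weights are nonnegative. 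What your approach buys is a proof from first principles (strong LP duality plus Shapley's vertex description), making the statement verifiable without consulting \cite{lagrsu23}; what the paper's choice buys is brevity, since the cited reference develops this machinery in the broader context of effective coalitions and core stability.
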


An immediate consequence of this lemma is the following general result.
\begin{proposition}\label{prop:int}
If $v$ belongs to the interior of $\cBG(n),$ then $v$ has not a point core.
\end{proposition}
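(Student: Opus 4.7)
The plan is to combine the characterization of the interior of $\cBG(n)$ as the strict version of the defining inequality system with the Laplace Mermoud et al. description of the effective coalitions (Lemma~\ref{lem:lmgs}), and then deduce that the core must have positive dimension.

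First, I would observe that by Theorem~\ref{th:3}, $\cBG(n)$ is the full-dimensional polyhedron defined by the finite system
\[
\sum_{S\in\cB}\lambda^\cB_S v(S) - v(N) \leqslant 0, \quad \cB\in\gB^*(n),
\]
with no implied equality (as noted in the proof of Theorem~\ref{th:3}, since for each $S\neq\varnothing$ all coefficients of $v(S)$ share the same sign). Consequently, a game $v$ lies in the interior of $\cBG(n)$ if and only if all these inequalities are strict, i.e.,
\[
\sum_{S\in\cB}\lambda^\cB_S v(S) < v(N), \quad \forall \cB\in\gB^*(n).
\]

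Next I would apply Lemma~\ref{lem:lmgs}: the set $\cE(v)$ of effective coalitions equals the union of all minimal balanced collections $\cB$ for which the Bondareva--Shapley inequality is tight. Under the strict inequalities above, no m.b.c.\ from $\gB^*(n)$ can contribute, so $\cE(v)$ reduces to $\{N\}$ (which always belongs to $\cE(v)$ by efficiency).

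Finally, as remarked just after the definition of $\cE(v)$, the condition $\cE(v)=\{N\}$ is exactly the condition that $C(v)$ is full-dimensional, i.e., has dimension $n-1$. Since $n\geqslant 2$, this gives $\dim C(v)\geqslant 1$, and hence $C(v)$ cannot be a singleton. No step here looks delicate; the only thing to be careful about is to invoke the characterization of the interior via strict inequalities (which relies on $\cBG(n)$ being full-dimensional and having no implied equalities, both already established in Theorem~\ref{th:3}).
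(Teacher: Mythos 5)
Your proposal is correct and follows essentially the same route as the paper: the paper's (very brief) argument is exactly that in the interior no inequality of (\ref{eq:bg}) is tight, so Lemma~\ref{lem:lmgs} gives $\cE(v)=\{N\}$ and hence a full-dimensional core, which cannot be a singleton. Your only addition is the explicit justification (via Theorem~\ref{th:3}) that the interior is characterized by strict inequalities, which the paper leaves implicit.
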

Indeed, for such a game $v$, no inequality in (\ref{eq:bg}) is tight, which
implies by Lemma~\ref{lem:lmgs} that $\cE(v)$ is reduced to $N$, which means that
the core is full-dimensional.

We begin with a simple result.
\begin{lemma}\label{lem:lin}
Any game in $\Lin(\cBG(n))$ has a point core.
\end{lemma}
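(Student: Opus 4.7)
The plan is to exploit the explicit basis of $\Lin(\cBG(n))$ furnished by Theorem~\ref{th:3}. Writing any $v \in \Lin(\cBG(n))$ as $v = \sum_{i\in N} a_i w_i$ with $a_i \in \RR$, and using $w_i = \sum_{S\ni i} \delta_S$ (so that $w_i(S) = 1$ iff $i \in S$), I would observe that $v(S) = \sum_{i\in S} a_i$ for every coalition $S$. In other words, the lineality space consists exactly of the additive games on $N$.

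Once this identification is in hand, computing the core becomes routine. For any $x \in C(v)$ the singleton inequalities read $x_i \geqslant v(\{i\}) = a_i$ for every $i \in N$, while efficiency gives $\sum_{i\in N} x_i = v(N) = \sum_{i\in N} a_i$. Summing the singleton inequalities and comparing with the efficiency equality forces equality throughout, so $C(v) = \{(a_1, \ldots, a_n)\}$. Nonemptiness is automatic because $\Lin(\cBG(n)) \subseteq \cBG(n)$ by definition of the lineality space.

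A conceptually illuminating alternative, which I might also mention, goes through Lemma~\ref{lem:lmgs}: lying in the lineality space means that every inequality in (\ref{eq:bg}) is in fact an equality, so every m.b.c.\ is tight, and hence $\cE(v) = \bigcup_{\cB \in \gB^*(n)} \cB$. Since the partition $N^\bot$ is itself a m.b.c., every singleton $\{i\}$ belongs to $\cE(v)$, which immediately pins down $x_i = v(\{i\})$ for each core element. There is no real obstacle here; the only thing to verify carefully is that writing $v$ in the basis $(w_i)$ really yields an additive game, after which the singleton claim is immediate.
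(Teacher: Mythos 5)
Your main argument is exactly the paper's proof: it writes $v=\sum_{i\in N}\alpha_i w_i$, notes that each $w_i=u_{\{i\}}$ is additive so $v$ is additive, and concludes that an additive game has a point core (you merely spell out the routine core computation that the paper leaves implicit, and your aside via Lemma~\ref{lem:lmgs} is a correct but unnecessary alternative). So the proposal is correct and follows essentially the same route as the paper.
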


\begin{proof}
Take $v\in\Lin(\cBG(n))$. Then, this game can be written as

$$ v=\sum_{i\in N}\alpha_iw_i, \, \alpha_1,\ldots,\alpha_n\in\RR .$$

As each $w_i=u_{\{i\}}$ is an additive game, so is $v$, and therefore $v$ has a point core.
\end{proof}

We turn to the examination of facets, which by Theorem~\ref{th:bgfacet} correspond to
minimal balanced collections in $\gB^*(n)$.
\begin{theorem}\label{th:pcbg}
Consider a minimal balanced collection $\cB\in\gB^*(n)$ and its corresponding
facet $\cF$ in $\cBG(n)$. The following holds:

\begin{enumerate}
\item If $|\cB|=n$, every game in $\cF$ has a point core.
\item Otherwise, no game in the relative interior of $\cF$ has a point core.
\end{enumerate}
\end{theorem}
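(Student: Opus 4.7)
My plan is to recast the question in terms of the rank of a family of characteristic vectors in $\RR^n$, and then use the uniqueness of the balancing weights of a minimal balanced collection to carry out the count.

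First I would use the standard fact that, for $v \in \cBG(n)$, the affine hull of $C(v)$ is the set of $x \in \RR^n$ satisfying $x(S) = v(S)$ for every $S \in \cE(v)$, so that
\[
\dim C(v) \;=\; n - \operatorname{rank}\{\1^S \mymid S \in \cE(v)\}.
\]
Hence $v$ has a point core iff the characteristic vectors of the effective coalitions span $\RR^n$.

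Next I would establish an auxiliary observation: for any m.b.c. $\cB$, the vectors $\{\1^S \mymid S\in\cB\}$ are linearly independent, hence their rank equals $|\cB|$. This follows from the already stated uniqueness of the balancing weights of a m.b.c.: if the $n \times |\cB|$ incidence matrix $A=(\mathbb{1}_{i\in S})_{i\in N, S\in\cB}$ had a non-trivial kernel, one could shift the unique weights $\lambda^\cB$ along a kernel direction until one coordinate vanished, producing a proper balanced subcollection of $\cB$ and contradicting minimality.

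With these two ingredients, part 1 is then almost immediate: for every $v\in\cF$, Lemma~\ref{lem:lmgs} gives $\cB\subseteq\cE(v)$, and $|\cB|=n$ together with the auxiliary fact forces $\operatorname{rank}\{\1^S\mymid S\in\cE(v)\}=n$, hence $\dim C(v)=0$. For part 2, taking $v$ in the relative interior of $\cF$ means by definition that among the inequalities in \eqref{eq:bg} only the one indexed by $\cB$ is tight; combining Lemma~\ref{lem:lmgs} with the facet--m.b.c. bijection of Theorem~\ref{th:bgfacet}, this forces $\cE(v)=\cB\cup\{N\}$. Since $\1^N = \sum_{S\in\cB}\lambda^\cB_S\1^S$ is already in the span of $\{\1^S \mymid S\in\cB\}$, adding $N$ does not change the rank, so
\[
\operatorname{rank}\{\1^S \mymid S\in\cE(v)\} \;=\; |\cB| \;<\; n,
\]
and therefore $\dim C(v)=n-|\cB|\geqslant 1$, so the core cannot be a singleton.

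The main potential obstacle is the careful identification of $\cE(v)$ on the relative interior of a facet: one must rule out that some m.b.c. different from $\cB$ (and from the trivial $\{N\}$) produces an equality on $\cF$. This is exactly where the bijection between facets of $\cBG(n)$ and elements of $\gB^*(n)$ from Theorem~\ref{th:bgfacet} is crucial, feeding into Lemma~\ref{lem:lmgs} to yield the clean description $\cE(v)=\cB\cup\{N\}$ that powers the rank computation.
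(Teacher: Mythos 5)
Your proposal is correct and follows essentially the same route as the paper: Lemma~\ref{lem:lmgs} identifies the effective coalitions, minimality of $\cB$ gives full column rank of its incidence matrix (equivalently, uniqueness of the solution of $\sum_{S\in\cB}\lambda_S\1^S=\1^N$), and comparing $|\cB|$ with $n$ decides whether the equality system pins down a single core point. The only (harmless) difference is organizational: by using $\cB\subseteq\cE(v)$ and monotonicity of rank you treat all of $\cF$ at once in part~1, whereas the paper argues separately on the relative interior (where $\cE(v)=\cB\cup\{N\}$) and on the boundary of the facet, invoking balancedness for existence there.
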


\begin{proof}
Consider a minimal balanced collection $\cB$ and its corresponding facet $\cF$
in $\cBG(n)$.

\begin{enumerate}
\item Take $v$ in the relative interior of $\cF$. Note that any core
element $x\in C(v)$ satisfies the system

\begin{equation}\label{eq:s}
x(S) = v(S), \quad S\in\cE(v).
\end{equation}
By Lemma~\ref{lem:lmgs}, we have that $\cE(v)=\cB \cup \{ N\} .$  Observe
  that the inequality $x(N)=v(N)$ is redundant with the others as $\cB$ is
  balanced. Therefore, in (\ref{eq:s}) we can remove $S=N$.
Consider now the system

\[
\sum_{S\in\cB}\lambda_S1^S=1^N.
\]
As $\cB$ is minimal, the above system has a unique solution. Hence, the rank of
the matrix $M$ of this linear system is $|\cB|$. On the other hand, the matrix
of system (\ref{eq:s}) (without $N$) is the transpose of $M$, and
therefore has rank $|\cB|$. It follows that the solution of (\ref{eq:s}) is
unique iff $|\cB|=n$.

\item It remains to prove that games on the frontier of a facet $\cF$ defined by
  a m.b.c. $\cB$ s.t. $|\cB|=n$ have also a point core. Take $v$ in the frontier
  of $\cF$. Then, $v$ also belongs to other facets, say $\cF_1, ..., \cF_r$,
  associated with m.b.c. $\cB_1, ..., \cB_r,$ respectively. It follows from
  Lemma~\ref{lem:lmgs} that any core element $x\in C(v)$ satisfies the system

\[
x(S) = v(S), \quad S\in\cB\cup\cB_1 \cup ... \cup \cB_r.
\]

As the system $x(S)=v(S)$ for $S\in \cB$ has a unique solution, it follows that
either the above system has no solution or a unique one. But the system has solution by the fact that $v$ is balanced.
\end{enumerate}
\end{proof}

\begin{remark}
As the lineality space is the intersection of all facets,
Lemma~\ref{lem:lin} is obtained as a corollary of the above theorem.
\end{remark}

Note that Part 2 of the previous proof allows for a characterization of games with a point core in $\cBG(n).$

\begin{theorem}
Consider a face $\cF = \cF_1 \cap \cdots \cap \cF_p,$ with $\cF_1, \ldots, \cF_p$ the facets associated to m.b.c. $\cB_1,\ldots,\cB_p,$ respectively. Then, any game in $\cF$ has a point
core iff the rank of the matrix $\{1^S,S\in \cB_1\cup\cdots\cup\cB_p\}$ is $n$.
\end{theorem}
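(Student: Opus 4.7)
The plan is to lift the argument from Part~2 of the proof of Theorem~\ref{th:pcbg} from a single facet to an arbitrary face $\cF$. The two ingredients are: (i) Lemma~\ref{lem:lmgs}, which identifies $\cE(v)$ as the union of all minimal balanced collections whose inequality in~(\ref{eq:bg}) is tight at $v$; and (ii) the standard fact that $\dim C(v) = \dim A_v$, where
\[
A_v := \{x \in \RR^N \mymid x(S) = v(S),\ S \in \cE(v)\}.
\]
Once these are in place, the characterization reduces to a rank computation on the linear system $\{x(S) = v(S)\}_{S \in \cB_1 \cup \cdots \cup \cB_p}$.

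For the easy direction $(\Leftarrow)$, assume the rank is $n$. Take any $v \in \cF$; since $v$ lies in every $\cF_j$, Lemma~\ref{lem:lmgs} gives $\cB_1 \cup \cdots \cup \cB_p \subseteq \cE(v)$, hence
\[
C(v) \subseteq A_v \subseteq \{x \mymid x(S) = v(S),\ S \in \cB_1\cup\cdots\cup\cB_p\}.
\]
By the rank hypothesis, the last set is a singleton, and balancedness of $v$ guarantees $C(v) \neq \varnothing$, so $C(v)$ is a point.

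For the converse $(\Rightarrow)$, I argue by contrapositive: if the rank equals $r < n$, I exhibit $v \in \cF$ with a non-point core. Choose $v$ in the relative interior of $\cF$; then the only inequalities of~(\ref{eq:bg}) tight at $v$ are those indexed by $\cB_1, \ldots, \cB_p$. Applying the converse implication in Lemma~\ref{lem:lmgs} yields $\cE(v) = \cB_1 \cup \cdots \cup \cB_p \cup \{N\}$, with $x(N) = v(N)$ redundant since each $\cB_j$ is balanced. Hence $\dim A_v = n - r \geq 1$, and the identity $\dim C(v) = \dim A_v$ forces $\dim C(v) \geq 1$.

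The main obstacle is justifying $\dim C(v) = \dim A_v$: by construction $C(v) \subseteq A_v$, but equality of dimensions requires producing some $x^* \in C(v)$ at which every remaining core inequality $x(T) \geq v(T)$ with $T \notin \cE(v)$ is strict, so that a small relative neighborhood of $x^*$ in $A_v$ stays inside $C(v)$. This follows from the definition of $\cE(v)$ (for each such $T$ there exists a witness $x_T \in C(v)$ with $x_T(T) > v(T)$) by taking a convex combination of all these witnesses; this averaging argument is essentially the geometric observation underlying Lemma~\ref{lem:lmgs} in \cite{lagrsu23}. Once this dimensional equality is granted, the remainder of the proof is pure linear algebra.
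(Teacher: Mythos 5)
Your proof is correct and takes essentially the same route as the paper: Lemma~\ref{lem:lmgs} combined with a rank argument on the linear system $x(S)=v(S)$, $S\in\cB_1\cup\cdots\cup\cB_p$, with balancedness supplying nonemptiness of the core. The only differences are presentational --- you handle relative-interior and frontier points uniformly in the ``if'' direction instead of the paper's case split, and you spell out the implicit-equality step $\dim C(v)=\dim A_v$ that the paper compresses into ``the conclusion follows immediately.''
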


\begin{proof}
Using the same argument as in the proof of Theorem~\ref{th:pcbg}, any core
element $x$ of a game $v$ in the relative interior of $\cF$ satisfies the system

\[
x(S) = v(S), \quad S\in\cB_1\cup\cdots\cup\cB_p
\]
and the conclusion follows immediately. Now, if $v$ is in the frontier, proceed
as in the proof of  Theorem~\ref{th:pcbg}.
\end{proof}

The above results plus the result on the interior of $\cBG(n)$ completely
characterizes the set of games with a point core.

\begin{example}
Consider $n=4$, and the facets $\cF_1, \cF_2$ associated with the
m.b.c. $\cB_1=\{12,34\}$ and $\cB_2=\{1,234\}$. Then, games in the relative
interior of $\cF_1\cap \cF_2$ have
no point core.  Although $|\cB_1|+|\cB_2|=4=n$, the rank of the matrix $\{1^S,S\in \cB_1\cup \cB_2\}$ is 3,
therefore games in the face $\cF_1\cap \cF_2$ have no point core but a core of
dimension 1.

Consider now the face $\cF_3$ associated with $\cB_3=\{13,124,23\}$. Games in
the relative interior of this facet have no point core. However, games in
$\cF'=\cF_1\cap \cF_3$ have a point core since the rank of $\{\1^S,S\in \cB_1\cup \cB_3\}$ is 4.
\end{example}

\begin{example}
Let us give a complete analysis with $n=3$. The lineality space has basis
  $\{u_{\{1\}},u_{\{2\}},u_{\{3\}}\}$. The extremal rays are
  $-\delta_{12},-\delta_{13},-\delta_{23}$, and

\begin{align*}
    r_1 &= \delta_{12} + \delta_{13} + \delta_{123}\\
    r_2 &= \delta_{12} + \delta_{23} + \delta_{123}\\
    r_3 &= \delta_{13} + \delta_{23} + \delta_{123}.
\end{align*}

Next table gives the facets and which extremal rays (except those of the lineality space which all belong to every facet) belong to them.

\begin{center}
    \begin{tabular}{|c|c|c|c|c|c|c|}\hline
        m.b.c. & $-\delta_{12}$ & $-\delta_{13}$ & $-\delta_{23}$ & $r_1$ &
        $r_2$ & $r_3$ \\ \hline
        $\cB_1=\{1,2,3\}$ & $\times$ & $\times$ & $\times$ & & & \\
        $\cB_2=\{1,23\}$ &  $\times$ & $\times$ & & &  $\times$ & $\times$ \\
        $\cB_3=\{2,13\}$ &   $\times$ & & $\times$ &  $\times$ & & $\times$ \\
        $\cB_4=\{3,12\}$ &   & $\times$ & $\times$ &  $\times$ & $\times$ & \\
        $\cB_5=\{12,13,23\}$ & & & & $\times$ & $\times$ & $\times$ \\ \hline
    \end{tabular}
  \end{center}

  This shows the neighborhood relations between facets (two facets are neighbors
  if they have a common extremal ray which is not in the lineality space), and
  consequently all faces. Interestingly, the figure below of the 5 facets gives a faithful
  representation of the neighborhood relation, although it is not a correct
  geometrical representation. The part in blue indicates where are the games
  with a point core.

  \begin{center}
    \psset{unit=0.7cm}
    \pspicture(-2,0)(7,6)
    \pspolygon[fillstyle=solid,fillcolor=cyan,linecolor=cyan](0,0)(5,0)(2.5,2)
    \pspolygon[fillstyle=solid,fillcolor=cyan,linecolor=cyan](0,4)(5,4)(2.5,6)
    \psline[linecolor=cyan](0,0)(0,4)
    \psline[linecolor=cyan](5,0)(5,4)
    \psline[linestyle=dashed,linecolor=cyan](2.5,2)(2.5,6)
    \uput[90](2.5,0){$\{1,2,3\}$}
    \uput[90](2.5,4){$\{12,13,23\}$}
    \rput(2.5,2.5){$\{1,23\}$}
    \rput(-2,2.5){$\{2,13\}$}
    \rput(7,2.5){$\{3,12\}$}
    \pscurve{->}(-2,3)(-1,3.5)(-0.1,3)
    \pscurve{->}(7,3)(6,3.5)(5.1,3)
    \endpspicture
  \end{center}
  \end{example}

\subsection{The case of $\cBG_\alpha(n)$}
Since the facet of $\cBG_\alpha(n)$ corresponding to some m.b.c. $\cB\in\gB^*(n)$
is the intersection of the facet of $\cBG(n)$ corresponding to $\cB$ and the
hyperplane $v(N)=\alpha$, every result established for $\cBG(n)$ applies without
any change to $\cBG_\alpha(n)$.

\subsection{The case of $\cBG_+(n)$}
We know by Theorem~\ref{th:facet} that every
m.b.c. $\cB\in\gB^*(n)$ determines  a facet of $\cBG_+(n)$,
which is therefore a subset of the corresponding facet in $\cBG(n)$. Therefore,
once again, Theorem~\ref{th:pcbg} applies without any change.

On the other hand, we know from Theorem~\ref{th:facet} that each equality
$v(S)=0$, $S\neq\varnothing,N$, defines a facet. Taking a game $v$ in the
relative interior of this facet, as no inequality
$\sum_{S\in\cB}\lambda^\cB_Sv(S)\leqslant 1$ is tight, $v$ is in the interior of
$\cBG(n)$, and by Proposition~\ref{prop:int}, $v$ has not a point core.
Summarizing, we have found the following.
\begin{theorem}
  The following holds for $\cBG_+(n)$:
  \begin{enumerate}
    \item Every game in a facet defined by $\cB\in\gB^*(n)$ s.t. $|\cB|=n$ has a
      point core.
    \item No game in the relative interior of any other facet has a point core.
    \item Consider a face $\cF=\cF_1\cap\cdots\cap\cF_p\cap \cF_{p+1}\cap\cdots\cap\cF_r$, with
      $\cF_1,\ldots,\cF_p$ being facets associated to
      m.b.c. $\cB_1,\ldots,\cB_p$, and $\cF_{p+1},\ldots,\cF_r$ being facets
      associated to sets $S_{p+1},\ldots, S_r$. Then any game in $\cF$ has a
      point core iff the rank of the matrix $\{\1^S,
      S\in\cB_1\cup\cdots\cup\cB_p\}$ is $n$.
    \end{enumerate}
\end{theorem}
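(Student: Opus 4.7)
The plan is to reduce all three parts to the analogous analysis for $\cBG(n)$ carried out in Theorem~\ref{th:pcbg}, combined with Lemma~\ref{lem:lmgs} characterising the effective coalitions $\cE(v)$, and Proposition~\ref{prop:int} on the interior. The key observation I will exploit is that each facet of $\cBG_+(n)$ coming from a m.b.c.\ $\cB$ is the intersection of the corresponding facet of $\cBG(n)$ with the polytope $\cBG_+(n)$, so the determination of $\cE(v)$ via tight balanced inequalities is inherited from the ambient polyhedron $\cBG(n)$.

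For Part 1, I would take $v$ in the facet $\cF$ of $\cBG_+(n)$ associated with a m.b.c.\ $\cB$ satisfying $|\cB|=n$, so that the balanced inequality for $\cB$ is tight at $v$. Lemma~\ref{lem:lmgs} then gives $\cB\cup\{N\}\subseteq\cE(v)$, and the argument of Theorem~\ref{th:pcbg}(1) applies verbatim: the matrix $\{\1^S:S\in\cB\}$ has rank $|\cB|=n$ by minimality, so the system $x(S)=v(S)$, $S\in\cB$, has a unique solution and $C(v)$ is a singleton.

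For Part 2, I would split the analysis by type of facet. If $\cF$ is associated with a m.b.c.\ $\cB$ with $|\cB|<n$, then a game $v$ in the relative interior of $\cF$ satisfies no other tight m.b.c.\ inequality, hence $\cE(v)=\cB\cup\{N\}$; since the rank of $\{\1^S:S\in\cB\}$ equals $|\cB|<n$, the solution set of $x(S)=v(S)$ for $S\in\cB$ has dimension at least $n-|\cB|>0$, so $v$ has no point core. If instead $\cF$ is the facet defined by $v(S)=0$ for some $S$, then in its relative interior no balanced inequality is tight, so $v$ lies in the interior of $\cBG(n)$, and Proposition~\ref{prop:int} delivers a full-dimensional core.

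For Part 3, I would first treat games $v$ in the relative interior of $\cF$: by Lemma~\ref{lem:lmgs}, $\cE(v)=\{N\}\cup\bigcup_{i=1}^{p}\cB_{i}$, because the nonnegativity facets contribute nothing to $\cE(v)$ (the equality $v(S_j)=0$ is a condition on $v$ only, not a constraint forcing $x(S_j)=v(S_j)$ for every core element). Then $C(v)$ is the solution set of $x(S)=v(S)$, $S\in\cE(v)$; dropping the redundant equation $x(N)=v(N)$, this system has a unique solution iff the rank of $\{\1^S:S\in\bigcup_{i=1}^{p}\cB_i\}$ equals $n$. For games on the boundary of $\cF$, I would invoke rank monotonicity: such a game lies in a subface whose associated list of m.b.c.\ contains $\cB_{1},\ldots,\cB_{p}$ (plus possibly more), so the rank of its effective-coalition matrix is at least $n$ when the rank of $\{\1^S:S\in\bigcup\cB_i\}$ is, and $C(v)$ remains a singleton. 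The step I expect to be the main obstacle is verifying rigorously the claim that the $v(S_j)=0$ facets do not enlarge $\cE(v)$; this hinges on unpacking Lemma~\ref{lem:lmgs} carefully, together with the observation that in $\cBG_+(n)$ the constraint $x(S_j)\geqslant v(S_j)=0$ is automatically implied by $x_i\geqslant v(\{i\})\geqslant 0$ on singletons and is therefore not generically tight.
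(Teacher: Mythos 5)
Your proposal is correct and takes essentially the same route as the paper, which likewise reduces all three parts to Theorem~\ref{th:pcbg}, Lemma~\ref{lem:lmgs} and Proposition~\ref{prop:int}, noting that each m.b.c.\ facet of $\cBG_+(n)$ is contained in the corresponding facet of $\cBG(n)$ and that the facets of type $v(S)=0$ play no role. The step you flag as the main obstacle is in fact immediate: Lemma~\ref{lem:lmgs} describes $\cE(v)$ purely in terms of which balanced inequalities are tight, so the nonnegativity facets cannot enlarge it, and your auxiliary remark that $x(S_j)\geqslant v(S_j)=0$ is ``not generically tight'' is both unnecessary and not true in general (it can be tight for every core element).
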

Note that in (3) of the above theorem, the facets associated to sets
  $S_{p+1},\ldots,S_r$ play no role.

Another approach is to address the question under the point of view of vertices,
instead of faces.  Proposition~\ref{prop:v} gives immediately the answer to our
question as far as vertices are concerned.
\begin{corollary}
A vertex $v\in\cBG_+(n)$ associated to collection $\cD$ has a point core iff
$|\bigcap\cD|=1$.
\end{corollary}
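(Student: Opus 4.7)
The plan is to derive this as a direct consequence of Proposition~\ref{prop:v}, which gives an explicit description of the core of any vertex of $\cBG_+(n)$ in terms of its associated collection $\cD$. The corollary is essentially a dimension count.

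Concretely, I would split into the two cases covered by Proposition~\ref{prop:v}. First, if $\cD \neq \varnothing$, the proposition yields $C(v)=\conv\{\1^{\{i\}}\mymid i\in\bigcap\cD\}$, and since the vectors $\1^{\{i\}}$ for distinct $i\in N$ are affinely independent, this convex hull is a geometric simplex of dimension $|\bigcap\cD|-1$. Hence $C(v)$ reduces to a singleton if and only if $|\bigcap\cD|=1$. Second, if $\cD=\varnothing$, then $v=u_N$ and $C(v)=\Delta(n)$, whose dimension is $n-1$; under the convention $\bigcap\varnothing=N$ (which makes the notation $|\bigcap\cD|$ meaningful in this degenerate case), the equivalence $|\bigcap\cD|=1\Leftrightarrow |C(v)|=1$ again holds trivially (both sides fail for $n\geqslant 2$ and both hold for $n=1$).

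There is no real obstacle here: the content of the corollary is entirely captured by Proposition~\ref{prop:v}, and what remains is only to observe that the dimension of the convex hull of a set of distinct vertices of the standard simplex $\Delta(n)$ equals its cardinality minus one, and to address the boundary case $\cD=\varnothing$ by the appropriate convention. Accordingly, I would write the proof as a two-line remark, noting that $|\bigcap\cD|=1$ is exactly the condition under which the formula of Proposition~\ref{prop:v} produces a $0$-dimensional core.
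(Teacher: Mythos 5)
Your proposal is correct and matches the paper's treatment: the corollary is stated there as an immediate consequence of Proposition~\ref{prop:v}, exactly the dimension count $\dim C(v)=|\bigcap\cD|-1$ that you carry out. Your extra care with the degenerate case $\cD=\varnothing$ (vertex $u_N$, core $\Delta(n)$) is a reasonable clarification but does not change the argument.
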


Another point is to investigate whether games in a face whose all vertices have a point
core have also a point core. Surprisingly, this is false in general already for
edges, i.e., faces of dimension 1. The next theorem clarifies the situation for
edges.
\begin{theorem}\label{th:a4}
  Consider two adjacent vertices $v_1,v_2$ of $\cBG_+(n)$, with associated
  collections $\cD_1,\cD_2$ respectively, and $\bigcap\cD_1=\{i\}$,
  $\bigcap\cD_2=\{j\}$. Consider $v=\lambda v_1+(1-\lambda)v_2$ with
  $\lambda\in\left]0,1\right[$, i.e., a game in the edge between $v_1,v_2$. Then:
  \begin{enumerate}
  \item If $i=j$, then $C(v)$ is a singleton, i.e., $v$ has a point core.
  \item If $i\neq j$ and $n\leqslant 4$, then $v$ has a point core.
  \end{enumerate}
\end{theorem}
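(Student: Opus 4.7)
The plan is to handle the two parts separately, leveraging the adjacency structure from Theorems~\ref{th:a1}, \ref{th:a2}, \ref{th:a3} and Lemma~\ref{lem:a3}, together with the constraint $x\geqslant 0$ that holds for any core element in $\cBG_+(n)$. For \emph{Part~1} ($i=j$), I would invoke Theorem~\ref{th:a1}: after possibly swapping $v_1,v_2$, one writes $\cD_2=\cD_1\cup\{T\}$ for a single $T\ni i$, with $\cD_1\ne\varnothing$. Then $v(S)=1$ for every $S\in\cD_1$, so any $x\in C(v)$ satisfies $x(S)\geqslant 1$; combined with $x\geqslant 0$ and $x(N)=1$, this forces $x(S)=1$ and $x_k=0$ for every $k\notin S$. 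Varying $S$ over $\cD_1$ yields $x_k=0$ for every $k\in N\setminus\bigcap\cD_1=N\setminus\{i\}$, hence $x=\1_i$; the residual constraint $x(T)\geqslant 1-\lambda$ is automatic.

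For \emph{Part~2} ($i\ne j$, $n\leqslant 4$), I would split on whether $\cD_1\cap\cD_2$ is empty. If $\cD_1\cap\cD_2\ne\varnothing$, set $T:=\bigcap(\cD_1\cap\cD_2)$; by Theorem~\ref{th:a3}, $T\supseteq\{i,j\}$, and exactly as in Part~1 every core element is supported on $T$. Since $T\subsetneq N$ and $n\leqslant 4$, either $|T|=2$ or $|T|=3$. When $T=\{i,j\}$, condition~1 of Theorem~\ref{th:a3} gives $j\notin S$ for any $S\in\cD_1\setminus\cD_2$, and support on $T$ reduces $x(S)\geqslant\lambda$ to $x_i\geqslant\lambda$; symmetrically $x_j\geqslant 1-\lambda$, and $x_i+x_j=1$ then pins $x$ to the point $\lambda\1_i+(1-\lambda)\1_j$. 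When $|T|=3$ (forcing $n=4$, $T=\{i,j,k\}$, and $\cD_1\cap\cD_2=\{T\}$), the requirement $\bigcap\cD_1=\{i\}$ demands some $S\in\cD_1\setminus\cD_2$ missing $k$, which by condition~1 also misses $j$, so $S\subseteq\{i,l\}$ with $l$ the fourth element; since $x_l=0$, this yields $x_i\geqslant\lambda$, and by symmetry $x_j\geqslant 1-\lambda$, again a singleton.

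If instead $\cD_1\cap\cD_2=\varnothing$, Lemma~\ref{lem:a3} gives $i\in S$ and $j\notin S$ for every $S\in\cD_1$ (and the symmetric statement for $\cD_2$). Writing $\{k,l\}:=N\setminus\{i,j\}$ (of cardinality at most two), the constraint $\bigcap\cD_1=\{i\}$ then forces $\cD_1$ to contain $\{i\}$ or, failing that, both $\{i,k\}$ and $\{i,l\}$; analogously for $\cD_2$. If $\{i\}\in\cD_1$ and $\{j\}\in\cD_2$, the bounds $x_i\geqslant\lambda$ and $x_j\geqslant 1-\lambda$ are direct and give the singleton. If $\{i\}\in\cD_1$ but $\{j\}\notin\cD_2$, then $\{j,k\},\{j,l\}\in\cD_2$, and combining $x_j+x_k\geqslant 1-\lambda$, $x_j+x_l\geqslant 1-\lambda$ with $x_i\geqslant\lambda$ and $x(N)=1$ forces $x_k=x_l=0$ and $x_j=1-\lambda$; the symmetric subcase is analogous. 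The remaining possibility, $\{i\}\notin\cD_1$ and $\{j\}\notin\cD_2$, must be excluded by adjacency: the partition $\cD_3:=\{S\in\cD_1\cup\cD_2\mymid k\in S\}$, $\cD_4:=\{S\in\cD_1\cup\cD_2\mymid k\notin S\}$ satisfies $\bigcap\cD_3\ni k$ and $\cD_4=\{\{i,l\},\{j,l\}\}$ with $\bigcap\cD_4=\{l\}$, contradicting Theorem~\ref{th:a2}.

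The main obstacle is this last subcase: verifying that $\cD_4$ really reduces to $\{\{i,l\},\{j,l\}\}$ so that $\bigcap\cD_4\ne\varnothing$ relies on $|N\setminus\{i,j\}|\leqslant 2$, which is precisely where the hypothesis $n\leqslant 4$ enters and suggests why the statement may fail for $n\geqslant 5$.
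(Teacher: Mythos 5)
Your proposal is correct and takes essentially the same route as the paper: Part 1 via $\cD_1\subseteq\cD_2$ (Theorem~\ref{th:a1}) pinning every core element to $\1^{\{i\}}$, and Part 2 split on whether $\cD_1\cap\cD_2$ is empty, using Lemma~\ref{lem:a3}, condition 1 of Theorem~\ref{th:a3}, and an explicit $\cD_3,\cD_4$ construction (Theorem~\ref{th:a2}) to rule out the subcase $\{i\}\notin\cD_1$, $\{j\}\notin\cD_2$. The only divergence is organizational: in the case $\cD_1\cap\cD_2\neq\varnothing$ you first show core elements are supported on $\bigcap(\cD_1\cap\cD_2)$ and then case on its cardinality, rather than the paper's enumeration of which particular set lies in $\cD_1\cap\cD_2$ — an equivalent and slightly cleaner packaging (which also correctly renders vacuous the case $N\in\cD_1\cap\cD_2$, excluded by the convention of Theorem~\ref{th:2}).
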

\begin{proof}
1. Suppose $i=j$. Then by Theorem~\ref{th:a1}, we may suppose $\cD_1\subseteq
\cD_2$. Take $x\in C(v)$. Then for any $S\in\cD_1$, $v(S)=1$, therefore
$x(S)\geqslant 1$, which implies $x\in C(v_1)$. As $v$ is balanced and $C(v_1)$
is a singleton, so is $C(v)$.

2. Suppose w.l.o.g. $i=1,j=2$, and $n\leqslant 4$.

2.1. Suppose $\cD_1\cap\cD_2=\varnothing$ and consider $x\in C(v)$. Suppose
$\{1\}\in\cD_1$. Then $x_1\geqslant \lambda$. Similarly, if $\{2\}\in\cD_2$,
$x_2\geqslant 1-\lambda$. As $x(N)=1$ and $x\geqslant 0$, both facts force
$x_1=\lambda$ and $x_2=1-\lambda$, and therefore $x_k=0$ for all $k\neq 1,2$,
proving that $v$ has a point core. Otherwise, if $\{2\}\not\in\cD_2$, then
$\cD_2$ must contain at least two sets, whose intersection yields $\{2\}$. This
is possible with $n=4$ and then $\cD_2$ must contain $\{2,3\},\{2,4\}$ by Lemma~\ref{lem:a3}. Then, $x_2+x_3\geqslant 1-\lambda$ and
$x_2+x_4\geqslant 1-\lambda$, which together with $x_1\geqslant \lambda$ forces
equality everywhere, from which we deduce $x_3=x_4$, then $x_1+x_2+x_3=1$, and
so $x_4=x_3=0$ and $x_2=1-\lambda$. Finally,
if $\{1\}\not\in\cD_1$ and $\{2\}\not\in\cD_2$, we get
$\cD_1$ contains $\{1,3\},\{1,4\}$ and $\cD_2$ contains $\{2,3\},\{2,4\}$, which contradicts
adjacency (see case (a) of Fig.~\ref{fig:na}).

2.2 Suppose $\cD_1\cap\cD_2\neq\varnothing$ and consider $x\in C(v)$. Any set in
$\cD_1\cap \cD_2$ must contain $\{1,2\}$. Suppose first $\{1,2\} \in\cD_1\cap
\cD_2$. Then $x_1+x_2\geqslant 1$, and since $x(N)=1$ and $x\geqslant 0$, we
obtain $x_1+x_2=1$ and $x_k=0$ for $k\neq 1,2$. Since there must exist
  $S\in\cD_1\setminus\cD_2$ s.t. $2\not\in S$, this implies that
  $x(S)=x_1\geqslant \lambda$. Similarly, there exists $S\in\cD_2\setminus\cD_1$
  such that $1\not\in S$, and $x(S)=x_2\geqslant 1-\lambda$. Combining this
with $x_1+x_2=1$, we obtain equality throughout, which proves that $v$ has a
point core.

Suppose now with $2<n\leqslant 4$ that $\{1,2,3\}\in\cD_1\cap\cD_2$. Proceeding
as above we get $x_1+x_2+x_3=1$ and $x_k=0$ for $k\neq 1,2,3$. Now, there must
exist $S\in\cD_1\setminus\cD_2$ such that $3\not\in S$: either $S=\{1\}$ or, if
$n=4$, $S=\{1,4\}$. This yields $x_1\geqslant \lambda$. Similarly with
$\cD_2\setminus\cD_1$, we get $x_2\geqslant 1-\lambda$, and we can conclude as
above. The case $\{1,2,4\}\in\cD_1\cap\cD_2$ is similar.

Finally, with $n=4$, $\{1,2,3,4\}\in\cD_1\cap \cD_2$ is possible. Then,
$\cD_1\setminus\cD_2$ must contain $\{1\}$ or both $\{1,3\},\{1,4\}$, and
similarly for $\cD_2\setminus\cD_1$. It suffices to proceed as for the case
$\cD_1\cap \cD_2=\varnothing$.
\end{proof}

The following counterexample shows that the result is no more true for $n>4$.
\begin{example}
  Let us take $n=5$ and two collections $\cD_1,\cD_2$ defined by
  \[
\cD_1= \{\{1,2,3,4,5\}, \{1,3\}, \{1,4\}, \{1,5\}\}, \quad \cD_2= \{\{1,2,3,4,5\},
\{2,3,4\}, \{2,4,5\}, \{2,3,5\}\}.
\]
We have $\bigcap\cD_1=\{1\}$ and $\bigcap\cD_2=\{2\}$. Therefore, $\cD_1,\cD_2$
determine vertices, which we denote by $v_1,v_2$, respectively. It can be
checked (via Theorem~\ref{th:a3} or simply Fig.~\ref{fig:na}) that these two vertices are
adjacent.

Let us consider $v=\frac{1}{2}(v_1+v_2)$. One can check that $x,y$ are two
distinct core elements of $v$, hence it is not reduced to a singleton:
\begin{align*}
x &=\begin{pmatrix}\frac{1}{4} & 0 &\frac{1}{4} &\frac{1}{4}
      &\frac{1}{4}\end{pmatrix}\\
y &=\begin{pmatrix}\frac{1}{2} & \frac{1}{2} & 0 & 0 & 0\end{pmatrix}.
\end{align*}
\end{example}

\section{Applications}

The fact that the polytope $\cBG_{+}(n)$ is combinatorial has profound
  implications on optimization issues associated with the polytope. In \cite{mata95}, the importance of a polytope being combinatorial when
  performing optimization on it is explained. In this work, the property of
  being combinatorial is referred to as Property C. Some results of this work
  require a stronger property called Property B (Property B implies Property
  C). Next, we can see that the polytope $\cBG_{+}(n)$ also satisfies Property
  B, and therefore all the results of  \cite{mata95} are applicable to our case.

\begin{proposition}
Let $v_1, v_2$ and $v_3$ three vertices of $\cBG_{+}(n)$ such that $v_1 \leqslant v_2 \leqslant v_3$ (coordinate-wise), then $v_1+v_3-v_2$ is a vertex of $\cBG_{+}(n)$ (Property B).
\end{proposition}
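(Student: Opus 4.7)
The plan is to use the characterizations of vertices given in Theorems~\ref{th:1} and~\ref{th:2}: a vertex of $\cBG_+(n)$ is exactly a $0$-$1$-valued balanced game, and the associated collection $\cD:=\{S\subset N\mymid v(S)=1\}$ either equals $\varnothing$ (yielding $u_N$) or satisfies $\bigcap\cD\neq\varnothing$. So I will show that $v:=v_1+v_3-v_2$ is $0$-$1$-valued, lies in $\cG_+(n)$, and that its associated collection satisfies the intersection condition of Theorem~\ref{th:2}.

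First, since each $v_j$ is $0$-$1$-valued, the chain $v_1\leqslant v_2\leqslant v_3$ is equivalent (coalition by coalition) to the inclusions $\cD_1\subseteq\cD_2\subseteq\cD_3$. A quick case split on where a coalition $S$ sits in this chain shows that $v(S)$ equals $1$ when $S\in\cD_1$, equals $0$ when $S\in\cD_2\setminus\cD_1$, equals $1$ when $S\in\cD_3\setminus\cD_2$, and equals $0$ when $S\not\in\cD_3$. Hence $v$ is $0$-$1$-valued with associated collection
\[
\cD\;=\;\cD_1\cup(\cD_3\setminus\cD_2).
\]
Since $v_j(N)=1$ for each $j$, we have $v(N)=1$, and the case analysis above also gives $v\geqslant 0$, so $v\in\cG_+(n)$.

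It remains to invoke Theorem~\ref{th:2} on $\cD$. If $\cD=\varnothing$, then $v=u_N$ and we are done. Otherwise, I will show $\bigcap\cD\neq\varnothing$ by monotonicity of intersections: whenever $\cD_3\setminus\cD_2\neq\varnothing$, the inclusion $\cD_3\setminus\cD_2\subseteq\cD_3$ gives $\bigcap\cD_3\subseteq\bigcap(\cD_3\setminus\cD_2)$, and similarly $\cD_1\subseteq\cD_3$ gives $\bigcap\cD_3\subseteq\bigcap\cD_1$ (when $\cD_1\neq\varnothing$). Since $v_3$ is a vertex, $\bigcap\cD_3\neq\varnothing$ by Theorem~\ref{th:2}, and therefore any element of $\bigcap\cD_3$ belongs to both $\bigcap\cD_1$ and $\bigcap(\cD_3\setminus\cD_2)$, hence to $\bigcap\cD$. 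The degenerate sub-cases ($\cD_1=\varnothing$ with $\cD_3\setminus\cD_2\neq\varnothing$, or $\cD_3\setminus\cD_2=\varnothing$ with $\cD_1\neq\varnothing$) are handled in the same way, using only the nonempty one of the two sub-collections.

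There is no real obstacle here: the only subtlety is recognizing that the chain $\cD_1\subseteq\cD_2\subseteq\cD_3$ forces $v_1+v_3-v_2$ to stay in $\{0,1\}$, and that a common veto element of $\cD_3$ automatically serves as a veto element of $\cD_1$ and of $\cD_3\setminus\cD_2$. Once this is observed, the conclusion follows directly from Theorem~\ref{th:2}.
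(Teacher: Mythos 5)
Your proof is correct and follows essentially the same route as the paper: identify the $0$-$1$ pattern of $v_1+v_3-v_2$, observe that its associated collection is contained in $\cD_3$, and conclude via Theorem~\ref{th:2} that its intersection contains $\bigcap\cD_3\neq\varnothing$. In fact your bookkeeping is slightly more careful than the paper's, which records the collection of sets attaining value $1$ as $\cD_3\setminus(\cD_2\cup\cD_1)$ and thereby omits the sets of $\cD_1$ (which also take value $1$); the correct collection is, as you state, $\cD_1\cup(\cD_3\setminus\cD_2)$, and since it is still contained in $\cD_3$ the argument goes through unchanged.
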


\begin{proof}
Let $\cD_1, \cD_2$ and $\cD_3$ be the collections associated with each vertex. Since $\cD_1\subseteq \cD_2 \subseteq \cD_3,$ it follows that the sets attaining value 1 in $v_1 + v_3 - v_2$ are exactly those in $\cD_3\setminus (\cD_2 \cup \cD_1).$ As this set is contained in $\cD_3,$ it follows that

$$ \bigcap (\cD_3\setminus (\cD_2 \cup \cD_1)) \supseteq \bigcap \cD_3 \ne \emptyset ,$$ and
 therefore $v_1+v_3-v_2 \in \cBG_+(n).$
\end{proof}

For combinatorial polytopes, more efficient vertex enumeration algorithms can be defined. In \cite{memu23}, one can appreciate how the presence of Hamiltonian paths between vertices can be leveraged to make enumeration algorithms more efficient.

\medskip

Many interesting problems in cooperative game theory can be expressed as an optimization problem on $\cBG_+(n)$, especially approximation problems. As
explained above, the fact
that $\cBG_+(n)$ is combinatorial and even satisfies the stronger property B
permits to benefit from more efficient vertex enumeration algorithms and
therefore to obtain better performance in linear optimization.

Many operators on games are linear, e.g., the Harsanyi dividends
(a.k.a. M\"obius transform), the Shapley value and its
generalization the interaction transform $I:v\mapsto I^v$ defined by
\[
I^v(S) = \sum_{T\subseteq N\setminus
  S}\frac{(n-|S|-|T|)!|T|!}{(n-|S|+1)!}\sum_{K\subseteq S}(-1)^{|S\setminus
  K|}v(K\cup T)
\]
(see  \cite{grmaro99a,gra16} for many other linear operators on games). Note
that $I^v(\{i\})$ is the Shapley value for player $i$.
It follows that the maximization/minimization of $I^v(S)$ over $v\in\cBG_+(n)$
for some $S$, or any linear combination of such terms, is a LP problem which can be solved
efficiently, taking advantage of the fact that $\cBG_+(n)$ is combinatorial, and
even satisfies property B.

Another class of optimization problems is the approximation problem. As
explained in the introduction, the main motivation behind this work is to be
able to solve the projection problem on $\cBG(n)$, or on
$\cBG_\alpha(n),\cBG_+(n)$, which is a quadratic optimization problem. One can
also consider to minimize the $L_1$ norm instead of the $L_2$ norm, which leads
to a LP problem (after standard linearization):
\[
\min_{v\in\cBG_+(n)} \sum_{S\subseteq N}|v(S)-w(S)|
\]
with $w\in\cG_+(n)$. However, the linearization implies to introduce new variables and
constraints, therefore it is not obvious if one can still benefit from the
properties of $\cBG_+(n)$. Some further investigation is needed here.

\section{Concluding remarks}\label{sec:con}
Our study has permitted to have a complete description of the polyhedral
structure of the set of balanced games $\cBG(n)$, as well as of its subsets
$\cBG_\alpha(n)$ and $\cBG_+(n)$, in terms of extremal rays, vertices, facets
and adjacency relations between vertices. In addition, we have provided an
algorithm for the uniform random generation of the vertices of
$\cBG_+(n)$. Unexpectedly, the polytope $\cBG_+(n)$ of nonnegative normalized
balanced games seems to be related to some well-known combinatorial structures, as its
number of vertices is a known sequence in OEIS, related to the number of Boolean
functions in some Post classes. Moreover, this polytope is combinatorial, which
means that the adjacency graph of its vertices is Hamiltonian. In the last part
of the paper, we have given a characterization of faces of these polyhedra which
contain games with a core reduced to a singleton.

Still some issues would need a deeper analysis, especially in adjacency
relations. While we have provided a characterization of adjacent vertices for
$\cBG_+(n)$, we did not perform this analysis for the extremal rays of
$\cBG(n)$, $\cBG_\alpha(n)$, nor for the facets of these polyhedra. This would
help to solve the projection problem we intend to address in a future work:
finding the closest balanced game for a given non-balanced game.

Another topic of future research would be to study the set of monotone balanced
games. Monotone games are games satisfying the following property: if
$S\subseteq T$, then $v(S)\leqslant v(T)$. As $v(\varnothing)=0$, these games
are nonnnegative and therefore form a subset of $\cBG_+(n)$.
This class of games has a great importance: they are known under the name of
{\it capacities} (Choquet \cite{cho53}) in decision theory and nonadditive
integral theory (see, e.g., \cite{sch89} and the monograph
\cite{gra16}). Unfortunately, the analysis of the set of balanced capacities
reveals to be extremely difficult: with $n=4$ the number of vertices is already
equal to 9002, and most of them are not 0-1-valued. Finding an analytical
characterization of them seems to be challenging.

\section*{Acknowledgements}

This work has been supported in part by the Spanish Grant PID2021-124933NB-I00.

\bibliographystyle{plain}


\appendix

\section{Proofs of Section~\ref{sec:bgalpha}}
\subsection{Proof of Theorem~\ref{th:5}}

\begin{enumerate}

\item Let us prove that $\cBG_{\alpha }(n)$ is an affine cone, i.e., the translation of a cone.
Observe that a particular game in $\cBG_\alpha(n)$ is $\alpha u _{\{n\}}$, i.e.,
the unanimity game centered on $\{n\}$ and multiplied by $\alpha$.

Define $\cC_\alpha(n):=\cBG_\alpha(n)-\alpha u _{\{n\}}$ and let us show that

$$ \cC_{\alpha }(n)=\{ v': \sum_{S\in\cB}\lambda^\cB_Sv'(S)\leq 0\} .$$

Pick $v\in \cBG_\alpha(n)$ and
consider $v':=v-\alpha u _{\{n\}}$. Then, for any $\cB\in\gB^*(n)$,
  \[
\sum_{S\in\cB}\lambda^\cB_Sv'(S) = \sum_{S\in\cB}\lambda^\cB_Sv(S) -
\sum_{S\in\cB,S\ni n}\lambda^\cB_S\alpha
  \leqslant \alpha -\alpha = 0.
  \]
 Hence, the claim is proved. Conversely, suppose that a game $v''$ satisfies

\begin{equation}\label{eq:6}
  \sum_{S\in\cB}\lambda^\cB_Sv''(S)\leqslant 0, \forall \cB\in\gB^*(n).
\end{equation}

Then, $v''+\alpha u _{\{n\}}$ satisfies
  \[
\sum_{S\in\cB}\lambda^\cB_S\left[ v''(S)+\alpha u _{\{n\}}(S)\right] =
\sum_{S\in\cB}\lambda^\cB_Sv''(S) +\alpha\leqslant \alpha,
  \]
for any $\cB\in\gB^*(n)$, i.e., $v''+\alpha u _{\{n\}}\in \cBG_\alpha(n)$ and thus $v''\in \cC_\alpha(n)$.

Finally, take $v\in \cC_\alpha(n)$ and
$\beta\geqslant 0$. We
have $\beta v\in \cC_\alpha(n)$ because it satisfies (\ref{eq:6}). Therefore, $\cC_\alpha(n)$ is a cone.

\item Proceeding as for the proof of Theorem~\ref{th:3}, the affine space contained
  in $\cBG_\alpha(n)$ is obtained by replacing $v(N)$ by $\alpha$ in the
  equations giving the lineality space of $\cBG(n)$. This yields the system

\begin{align*}
  \sum_{i\in N}v(\{i\}) &= \alpha\\
  v(S)+\sum_{i\in N\setminus S}v(\{i\}) & = \alpha, \quad S\subset N, |S|>1.
\end{align*}

To obtain the corresponding vector space, we just replace $\alpha$ by 0, thus obtaining:

\begin{align*}
  \sum_{i\in N}v(\{i\}) &= 0\\
  v(S)+\sum_{i\in N\setminus S}v(\{i\}) & = 0, \quad S\subset N, |S|>1.
\end{align*}

The set of solutions for this system is, expressed in terms of
$v(\{1\}),\ldots,v(\{n-1\})$,

\[
\{(v_1,\ldots,v_{n-1},-\sum_{i=1}^{n-1}v_i,v_1+v_2,\ldots,\underbrace{\sum_{i\in
  S}v_i}_{S\not\ni n},\ldots,\underbrace{-\sum_{i\not\in S}v_i}_{S\ni n},\ldots)\mymid v_1,\ldots,v_{n-1}\in\RR\}.
\]

This yields the basis $(w_i)_{i\in N\setminus\{n\}}$ given above.
\end{enumerate}

\subsection{Proof of Theorem~\ref{th:6}}
We follow the same steps as for Theorem \ref{th:4}.

Observe that since $r_n=-\delta_{\{n\}}$, it could be considered as a ray of type
$r_S$ with $S=\{n\}$. We follow this way in steps 2 and 3 of the proof.

\begin{enumerate}
\item The $2n-2$ extremal rays corresponding to the lineality space come from its
basis given in Th.~\ref{th:5}.

\item Consider $S\subset N$, $|S|>1$ or $S=\{n\}$, and let us show that $r_S$ is
extremal. Clearly
(\ref{eq:6}) is satisfied and $r_S(\{i\})=0$ for all $i\in N\setminus \{n\}$,
hence $r_S$ is a ray of $\cC^0_\alpha(n)$. Suppose it is not extremal. Then there
exist two rays $r,r'\in\cC_\alpha^0(n)$ such that $r_S=r+r'$. Suppose that
$r(T)>0$, say, $r(T)=1$ for some $T\neq S$, $1<|T|<n$ or $T=\{n\}$. Then, $r'(T)=-1$.

Using the partition $\{T,(N\setminus T)^\bot\}$, the corresponding inequality in
(\ref{eq:6}) for $r$ becomes either $1\leqslant 0$ if $n\in T$, or $1+r(\{n\} )\leqslant
0$ if $n\not\in T$. The first case being a contradiction, let us study the
second case. For this case, it follows  that $r(\{n\})\leqslant -1$, which in turn implies $r'(\{n\})\geqslant
1$. Considering then the partition $\{N^\bot\}$, the corresponding inequality in
(\ref{eq:6}) for $r'$ becomes $0+r'({\{n\}} )\leqslant 0$, a contradiction.

The case $r(T)<0$ can be treated similarly. We conclude that $r$ must have zero
coordinates, possibly excepted for $T=S$. Observe that as before, $r(S)>0$ is not possible
since the inequality for $\{S,(N\setminus S)^\bot\}$ would not be satisfied (proceed as above
with $T$). Hence, $r_S$ is extremal.

\item Let us take $i\in N\setminus\{n\}$ and show that $r_i$ is an extremal ray.
First, we prove it is a ray in $\cC_\alpha^0(n)$. By definition, $r_i(\{j\})=0$ for all
$j\neq n$. It remains to prove that $r_i$ is a solution of (\ref{eq:6}).
Taking any m.b.c. $\cB$, the inequality becomes:
\[
\sum_{\substack{S\in\cB\\S\ni i\\S\not\ni n\\|S|>1}}\lambda^\cB_S -
\sum_{\substack{S\in\cB\\S\not\ni i\\S\ni n}}\lambda^\cB_S =
\sum_{\substack{S\in\cB\\S\ni i\\S\not\ni n\\|S|>1}}\lambda^\cB_S - \Big(1-
\sum_{\substack{S\in\cB\\S\ni i\\S\ni n}}\lambda^\cB_S\Big)=\sum_{\substack{S\in\cB\\S\ni i\\|S|>1}}\lambda^\cB_S-1 \leqslant 0.
\]
This is obviously satisfied as $\sum_{\substack{S\in\cB\\S\ni
    i}}\lambda^\cB_S=1$ and $\lambda^\cB_S\geq 0.$ Observe that the inequality is tight iff $\cB\not\ni\{i\}$.

To show that $r_i$ is extremal, we need to show that the set of solutions of the
system of tight inequalities has dimension 1. We have already observed that the
inequality is tight iff $\{i\}\not\in\cB$. Let us consider
this system in $v,$ i.e.
\begin{equation}\label{eq:13}
\sum_{S\in \cB} \lambda_S^{\cB} v(S)= 0, \forall \cB \in \gB^*(n) \text{ s.t. } \{i\}\not\in\cB,
\end{equation}
with $v\in \cG_\alpha(n)$ s.t. $v(i)=0, i\ne n.$ Take the partition
$\cB=\{S,S^\bot )$ with $i\in S$. The corresponding equality in (\ref{eq:13})
reads $v(S)=0$ if $S\ni n$, and $v(S)+v(\{n\})=0$ otherwise, i.e.,
$v(S)=-v(\{n\})$. It follows that the set of solutions has the form
$\{(\underbrace{\beta}_{\{n\}},\underbrace{0\cdots 0}_{S\ni n,|S|>1},\underbrace{-\beta\cdots-\beta}_{S\not\ni
  n,|S|>1}),\beta\in\RR\}$, and hence has dimension 1.

\item It remains to prove that there is no other extremal ray. Consider a ray $w\in
\cC_\alpha^0(n)$, hence satisfying $w(\{i\})=0$ for all $i\in N\setminus\{n\}$
and (\ref{eq:6}). If $w$ is not a conic combination of $r_S$ and $r_i$,
$S\subset N$, $|S|>1$, $i\in N$, the system (\ref{eq:s1}) has no solution in $\alpha_S,\alpha_i$.
Using definitions of $r_S,r_i$ and omitting coordinates for singletons (except
$n$) and $N$ in $r_S,r_i,w$, we obtain

\begin{align}
  -\alpha_S -\sum_{i\not\in S}\alpha_i &= w(S), \quad S\subset N,|S|>1, S\ni n\nonumber\\
  -\alpha_S +\sum_{i\in S}\alpha_i &= w(S), \quad S\subset N,|S|>1, S\not \ni n\nonumber\\
  \alpha_S &\geqslant 0, \quad S\subset N, |S|>1\nonumber \\
  \alpha_i &\geqslant 0, \quad i\in N.\nonumber
\end{align}

We may denote the whole system by $A\alpha\geqslant b$ in matrix notation (with
some abuse).  Then, by Farkas' Lemma, this system has no solution iff there exists a vector $[y \ z\ t]$ with
coordinates $y_S\in\RR$, $S\subseteq N, |S|>1$, $z_T\geqslant 0$, $T\subset
N,|T|>1$, and $t_i\geqslant 0$, $i\in N$, such that $[y \ z\ t]^\top A=0$ and
$[y\ z\ t]^\top b>0$. Observe that the only vector $[y \ z\ t]$ solution of $[y
  \ z\ t]^\top A=0$ is:

\[
y_S=1\ (S\subset N, |S|>1), \quad z_S=1 \ (S\subset N,
|S|>1), \quad t_1=\cdots=t_n=0.
\]

For this solution, we obtain
\[
[y\ z\ t]^\top b = \sum_{S\subset N,|S|>1}w(S).
\]

Considering again the balanced collection $\{S\subset N,|S|>1\}$ with balancing weights $\frac{1}{2^{n-1}-2},$ it follows that
\[
\sum_{S\subset N,|S|>1}w(S)\leqslant 0.
\]

Therefore, $[y\ z\ t]^\top b\leqslant 0$ and the second system has no solution. Hence, the first system has always a
solution.
\end{enumerate}
\end{document}